\documentclass[11pt]{article}
\usepackage{amsmath} \usepackage{amssymb} \usepackage{latexsym} \usepackage{enumitem} \usepackage{mathrsfs} \usepackage{comment}
\usepackage{color}

\usepackage[colorlinks=true,urlcolor=blue,
citecolor=red,linkcolor=blue,linktocpage,pdfpagelabels,bookmarksnumbered,bookmarksopen]{hyperref}

\newtheorem{assumption}{Assumption}

\def\qed{ \ \vrule width.2cm height.2cm depth0cm\smallskip}
\newenvironment{proof}{\noindent {\bf Proof.\/}}{$\qed$\vskip 0.1in}

\newcommand{\ol}{\overline}
\newcommand{\ul}{\underline}

\newcommand{\ba}{\begin{array}}
\newcommand{\ea}{\end{array}}
\newcommand{\be}{\begin{equation}}
\newcommand{\ee}{\end{equation}}
\newcommand{\bea}{\begin{eqnarray}}
\newcommand{\eea}{\end{eqnarray}}
\newcommand{\beaa}{\begin{eqnarray*}}
\newcommand{\eeaa}{\end{eqnarray*}}

\def\dbE{\mathbb{E}}
\def\dbF{\mathbb{F}}

\def\dbL{\mathbb{L}}

\def\dbP{\mathbb{P}}
\def\dbR{\mathbb{R}}

\def\dbX{\mathbb{X}}

\def\a{\alpha}
\def\b{\beta}
\def\g{\gamma}
\def\d{\delta}
\def\e{\varepsilon}

\def\k{\kappa}
\def\l{\lambda}

\def\si{\sigma}

\def\f{\varphi}
\def\th{\theta}

\def\L{\Lambda}

\def\O{\Omega}
\def\cA{{\cal A}}

\def\cE{{\cal E}}
\def\cF{{\cal F}}
\def\cG{{\cal G}}
\def\cH{{\cal H}}

\def\cL{{\cal L}}

\def\cP{{\cal P}}

\def\cS{{\cal S}}

\def\cX{{\cal X}}

\def\no{\noindent}

\def\ms{\medskip}

\def\q{\quad}
\def\qq{\qquad}

\def\pa{\partial}
\def\cd{\cdot}
\def\cds{\cdots}

\def\qed{ \hfill \vrule width.25cm height.25cm depth0cm\smallskip}

\newcommand{\basa}{\begin{assumption}}
\newcommand{\easa}{\end{assumption}}

\newcommand{\bas}{\begin{assum}}
\newcommand{\eas}{\end{assum}}

\def\liminf{\mathop{\underline{\rm lim}}}

\def\pa{\partial}

 \def\cd{\cdot}
\def\cds{\cdots}

\def\dis{\displaystyle}

\def\bx{{\bf x}}
\def\bX{{\bf X}}

\def\1{{\bf 1}}
\def\by{{\bf y}}

\def\:{\!:\!}
\def\reff{\eqref}
\def \proof{{\noindent \bf Proof.\quad}}

at 9pt

\definecolor{alp}{rgb}{0.0, 0.5, 0.0}

\newtheorem{thm}{Theorem}[section]
\newtheorem{lem}[thm]{Lemma}

\newtheorem{prop}[thm]{Proposition}
\newtheorem{rem}[thm]{Remark}
\newtheorem{eg}[thm]{Example}
\newtheorem{defn}[thm]{Definition}
\newtheorem{assum}[thm]{Assumption}

\begin{document}

\title{\bf Stackelberg Games with a Robust Leader} 
\author{Juan Li\thanks{School of Mathematics and Statistics, Shandong University, Weihai, Weihai 264209, P.R. China. Email: juanli@sdu.edu.cn.}, ~Jianfeng Zhang\thanks{\noindent  Department of Mathematics, 
University of Southern California, Los Angeles, CA 90089, USA. E-mail: jianfenz@usc.edu.}, ~ and Zimu Zhu\thanks{\noindent
Fintech Thrust, Hong Kong University of
Science and Technology (Guangzhou), Guangzhou, Guangdong Province, 511453, China. Email: zimuzhu@hkust-gz.edu.cn.
}\ms\\
\it Dedicated to Professor Shige Peng on the Occasion of His 80th Birthday
}

\date{}
\maketitle

\begin{abstract} In this paper we study a Stackelberg game with one leader and multiple followers. Given the leader's control, the followers solve a Nash game with possibly multiple equilibria. We consider a robust leader who considers the worst scenario, namely the followers would select the equilibrium worst for the leader. By using the weak formulation, the problem induces a zero sum game problem with open loop controls, which is time inconsistent. We shall characterize the last problem through an HJB equation on the Wasserstein space of probability measures. The principal-agent problem with one principal and multiple agents can be viewed as a special case of our problem, but with certain constraints.
\end{abstract}

\no{\bf Keywords.}  Stackelberg games, principal-agent problems, Nash equilibrium, zero sum games, open loop controls, mean field control, Wasserstein space

\ms
\no{\it 2020 AMS Mathematics subject classification:}  91A18, 91A15, 91B43, 49L20, 35Q89

\vfill\eject


\section{Introduction}
\label{sect-Introduction}
\setcounter{equation}{0}

Consider a Stackelberg game with one leader: Player $0$, and $n$ followers with $n>1$: Player $i$, $i=1,\cds, n$. For $i=0,\cds, n$, Player $i$ has admissible control $\a^i$ and has utility 
\bea
\label{hata}
J_i(\hat\a), \q i=0, \cds, n,\q\mbox{where}\q \a := (\a^1, \cds, \a^n),~ \hat\a := (\a^0, \a).
\eea
Given $\a^0$, we say $ \a^*$ is a Nash equilibrium with $\a^0$, denoted as $\a^* \in \cE(\a^0)$, if
\bea
\label{Nash0}
J_i(\a^0, \a^*) = v_i(\a^0, \a^{*,-i}), ~i=1,\cds, n,\q\mbox{where}\q v_i(\a^0, \a^{*,-i}) := \sup_{\a^i} J_i(\a^0, \a^{*,-i}, \a^i).
\eea
Here $\a^{-i} := (\a^1,\cds, \a^{i-1}, \a^{i+1}, \cds, \a^n)$.  In the standard literature, the followers' problem is to find the equilibria set $\cE(\a^0)$ for any given $\a^0$, and the leader's problem is:
\bea
\label{leader-sup}
\ol v_0 := \sup_{\a^0} \sup_{  \a^*\in \cE(\a^0)} J_0(\a^0,  \a^*).
\eea
The principal agent problem with one principal and $n$ agents is a special case of the above problem, but with the individual rationality constraints. That is, the above supremum is taken over all $\a^0$ and $\a^*\in \cE(\a^0)$ satisfying the IR constraints: for some $R=(R_1, \cds, R_n)\in \dbR^n$, 
\bea
\label{PA0}
J_i(\a^0,  \a^*) \ge R_i, \q i=1,\cds, n.
\eea
Such problems have received strong attention in the literature, see e.g. Dempe \cite{Dempe-book}, Dempe-Dutta-Mordukhovich \cite{DDM},  Djete \cite{Djete}, Elie-Mastrolia-Possamai \cite{EMP}, Elie-Possamai \cite{EP},   Green-Stokey \cite{GS}, Koo-Shim-Sung \cite{KSS}, Mookherjee \cite{Mookherjee},   Shimizu-Ishizuka-Bard \cite{SIB}, to mention a few.  We also refer to the textbook Fudenberg-Tirole \cite{FT-book} for general game theory.

There is a fundamental concern in the above formulation though. By considering the optimization $\sup_{\a^*\in \cE(\a^0)}J_0(\a^0, \a^*)$, it authorizes the leader to select an equilibrium, best for her interest. However, equilibria selection is a very serious issue in game theory, see e.g. Harsanyi-Selten \cite{HS-book}. The one best for the leader may even not be Pareto optimal for the followers,\footnote{\label{wishful} This is not an issue when there is only one follower. Indeed, in this case $\a^*$ becomes the follower's optimal control. When there are multiple optimal controls, they induce the same value for the follower, then the follower is indifferent on them and thus it could be reasonable to assume the follower would do the leader a favor and choose the optimal control best for the leader. }
then in many practical situations the leader should not expect the followers would implement her favorist equilibrium. Indeed, as we see in Example \ref{eg-supsup} below, such a wishful thinking could lead to a disaster for the leader when the followers do not follow her choice. To overcome this, in this paper we consider a robust  or pessimistic leader, by replacing $\sup_{  \a^*\in \cE(\a^0)}$ with $\inf_{  \a^*\in \cE(\a^0)}$:
\bea
\label{leader-inf}
\ul v_0 := \sup_{\a^0} \inf_{\a^*\in \cE(\a^0)} J_0(\a^0,  \a^*).
\eea
That is, since the leader is not sure which equilibrium the followers may implement, she considers the worst scenario and chooses her own control $\a^0$ accordingly. This problem is much harder to analyze, and there have been serious efforts on static models, see e.g.  Aussel-Svensson \cite{AS}, Dassanayaka \cite{Dassanayaka}, Dempe-Mordukhovich-Zemkoho \cite{DMZ},  Liu-Fan-Chen-Zheng \cite{LFCZ1, LFCZ2}, and the references therein. We also refer to Huang-Wang-Wu \cite{HWW} for a highly relevant work concerning robust Stackelberg game with model uncertainty in a linear quadratic framework. However, to the best of our knowledge, the theory remains largely open for general continuous time stochastic models.\footnote{If the leader has further information on the followers and has a prediction on which equilibrium the followers may select, then we shall replace $\inf_{  \a^*\in \cE(\a^0)}$ with that prediction and formulate the leader's problem accordingly.   We refer to Mookherjee \cite[Section 3]{Mookherjee} and Zhang \cite[Section 7.1]{Zhang-efficiency} for some preliminary discussions in this direction. }

The problem \reff{leader-inf} is inconvenient to study also because it deals with true equilibria of the followers. The set $\cE(\a^0)$ is very sensitive on $\a^0$, and in practice quite often the leader may use approximate optimal control $\a^{\e,0}$, instead of the optimal control $\a^{*0}$. Since $\cE(\a^{\e,0})$ can be very different from $\cE(\a^{*0})$, then the corresponding value could be far away from $\ul v_0$. This may invalidate many numerical methods for the leader's problem. Moreover, in many practical situations the existence of true equilibrium can also be a serious issue, namely $\cE(\a^0)$ could be empty. To overcome these difficulties, we shall consider the asymptotic value of approximate equilibria, which corresponds to the set value of games introduced in Feinstein-Rudloff-Zhang \cite{FRZ}, instead of the raw set value.

To be precise, we reformulate the robust leader's problem \reff{leader-inf} as follows. First, given $\a^0$ and $\e>0$, we say $  \a^\e$ is an $\e$-equilibrium with $\a^0$, denoted as $  \a^\e \in \cE_\e(\a^0)$, if
\bea
\label{Nashe}
J_i(\a^0,   \a^\e) \ge v_i(\a^0,   \a^{\e,-i})-\e, ~i=1,\cds, n,
\eea
where $v_i$ is defined in \reff{Nash0}. We then define the leader's value as:
\bea
\label{leader}
V_0 := \lim_{\e\to 0} V_\e,\q\mbox{where}\q V_\e := \sup_{\a^0} v_\e(\a^0),\q v_\e(\a^0) := \inf_{ \a^\e\in \cE_\e(\a^0)} J_0(\a^0,   \a^\e).
\eea
Note that, as $\e\downarrow 0$, $\cE_\e(\a^0)$ is decreasing, then $v_\e$ and hence $V_\e$ are increasing, and therefore $V_0$ is well defined, under mild conditions. We note again that in general $V_0 \neq \ul v_0$, see e.g. Example \ref{eg-Nashe} below.

Our main goal of this paper is to solve the problem \reff{leader} in a diffusion model with closed loop drift controls. As usual in the literature, we use the weak formulation. That is, we fix the state process $X$ as the canonical process, and the players control the law of the state process.  We first study the problem $v_\e(\a^0)$ for a given $\a^0$. Clearly the key is to understand the set $\cE_\e(\a^0)$, and we shall use a characterization of $\e$-equilibria from \cite{FRZ} through certain system of BSDEs, with its solution denoted as $(\ol Y^{\a^0, \a^\e}, \ol Z^{\a^0,\a^\e})$. Next, instead of considering BSDEs with control $\a^\e$, we rewrite $\ol Y$ as a forward SDE with controls $y:=\ol Y_0$ and $\b:= \ol Z$, which satisfy certain constraints. Moreover, thanks to the relaxation to approximate equilibria in \reff{leader}, we can use penalization to get rid of the constraints, and thus, roughly speaking, \reff{leader} becomes:
\bea
\label{leader2}
V_0 \approx \tilde V_0:= \sup_{\a^0} \inf_{y, \b} \tilde J_0(\a^0, y, \b)
\eea
for some appropriate $\tilde J_0$. We remark that, the idea of transforming a BSDE into a forward diffusion has been used in various areas, for example Ma-Yong \cite{MY1}  for FBSDEs,  Sannikov \cite{Sannikov}  and Cvitanic-Possamai-Touzi \cite{CPT1, CPT2} for principal agent problems, and E-Han-Jentzen \cite{EHJ} for deep learning algorithms for nonlinear PDEs and BSDEs. 

The optimization $\inf_y$ seems simple, since $y\in \dbR^n$ is finite dimensional.  However, note that $\tilde J_0(\a^0, y, \b)$ involves expectation of certain auxiliary state process $\bX$, which is different from the original state process $X$, and thus, given $(\a^0, \b)$, naturally the deterministic optimal $y^*$ involves the distribution of $\bX$. That is,  
\bea
\label{leader3}
\tilde V_0 = \sup_{\a^0} \inf_{\b} \hat J_0(\a^0, \b),\q\mbox{where}\q \hat J_0(\a^0,\b) := \inf_y \tilde J_0(\a^0, y, \b).
\eea
Note that $\hat J_0(\a^0,\b)$ is of mean field type, or say shares the feature of the mean variance problem, which is well known to be time inconsistent in the standard sense. To overcome this difficulty, it is natural to raise the problem to the Wasserstein space of probability measures, namely to view $\tilde V_0$ as a function of $\cL_{\bX}$, the law of the state process $\bX$, rather than a function of the value of $\bX$. For the mean field theory, we refer to the books Carmona-Delarue \cite{CD1,CD2}. 

The problem \reff{leader3} is in the form of zero sum game problem, but with open loop controls.\footnote{Our original problem \reff{leader} uses closed loop controls. However, since we use weak formulation, then the controls $\a^0, \b$ here depend on the canonical process $X$, and thus \reff{leader3} has open loop controls.}  This problem is still time inconsistent, 
and even under the Isaacs condition, typically the $\sup\inf$ and $\inf\sup$ values are  different and there is no saddle point.  See Example \ref{eg-zerosum} below for a counterexample.  In particular, its value does not coincide with the solution to the corresponding HJB-Isaacs equation. The latter has been a powerful tool for zero sum games, either with closed loop controls, see e.g. Hamadene-Lepeltier \cite{HL}, Hamadene-Lepeltier-Peng \cite{HLP}, Cardaliaguet-Rainer \cite{CR1},  Pham-Zhang \cite{PZ}, Sirbu \cite{Sirbu1, Sirbu2}, or with strategy versus open loop controls, see e.g. Fleming-Souganidis \cite{FS}, Buckdahn-Li \cite{BL}, or with delayed strategies, see e.g. Cardaliaguet-Rainer \cite{CR2}. We refer to Possamai-Touzi-Zhang \cite{PTZ} for detailed discussions on various formulations and for more references, and we also refer to  Cosso-Pham \cite{CP} for mean field type zero sum games with strategy versus open loop controls. For open loop controls, the literature has been mainly on discrete models, see e.g. Basar-Olsder \cite{BO-book} and the references therein. To the best of our knowledge, the theory remains open for diffusion models.\footnote{We would like to mention that there are quite a few works on linear quadratic diffusion models, see e.g. Feng-Hu-Huang \cite{FHH}, Mou-Yong \cite{MY2}, Sun \cite{Sun}, and Sun-Wang-Wen \cite{SWW}. However, their main focus is the saddle points for the zero sum game, which does not exist in general, see Remark \ref{rem-saddle} (ii) below.}

Our second goal of the paper is to study general zero sum games with open loop controls in diffusion models with possibly mean field (or McKean-Vlasov) dynamics. Our focus is the characterization of the value $\tilde V_0$ in \reff{leader3}, which is always well defined, rather than the saddle point, which typically does not exist.  Notice that this problem can also be viewed as a Stackelberg game, but with only one follower, and thus some of our previous ideas can be applied to this problem as well. We emphasize that, since there is only one follower and due to the zero sum nature, \reff{leader-sup} and \reff{leader-inf} are the same for this Stackelberg game. This simplifies the problem a lot. However, since we are using open loop controls now, even when there is only drift control, we are not able to fix the state process and use the weak formulation. Consequently, we are not able to characterize the follower's problem through an BSDE. Instead, we characterize the follower's problem $\inf_\b \hat J_0(\a^0, \b)$ through a coupled FBSDE, which is significantly harder to analyze than BSDEs. Nevertheless, by assuming strong technical conditions such that the above involved FBSDEs are well-posed, we manage to (approximately) characterize the problem $\tilde V_0$ in \reff{leader3} through an HJB equation on the Wasserstein space. In particular, for standard zero sum game with open loop controls but without the law involvement, this infinitely dimensional HJB equation can be reduced to a finite dimensional BSPDEs. 

Finally, our approach remains effective for principal agent problem with multiple agents. In particular, the IR constraint \reff{PA0} will only affect the optimization $\inf_y$ in \reff{leader2}, which makes the $\hat J_0(\a^0, \b)$ in \reff{leader3} relies on $\cL_\bX$ in a slightly more involved but still semi-explicit way. Moreover, we may allow for lump sum payments as well, which will add another layer of optimization for the terminal condition of the above infinite dimensional HJB equation characterizing $\tilde V_0$, but will have no impact on the generator of this HJB equation.  

The rest of the paper is organized as follows. In Section \ref{sect-stackelberg} we introduce our model and transform the Stackelberg game problem into a zero sum game problem with open loop controls. In Section \ref{sect-inconsistency} we discuss the time inconsistency issue. In Section \ref{sect-PA} we present a principal agent problem. In Section \ref{sect-drift} we analyze a special case of the  zero sum game problem with open loop controls, and in Section \ref{sect-vol} we study the general case. In Subsections \ref{sect-app1} and \ref{sect-app2} we apply the above general result to the Stackelberg game and to the principal agent problem, respectively. Finally in Section \ref{sect-eg} we present a few relevant examples.

\section{A continuous time Stackelberg game}
\label{sect-stackelberg}
\setcounter{equation}{0} 

Fix a finite time horizon $[0, T]$. Let $(\O, \cF, \dbP)$ be a probability space, $B$ a standard Brownian motion, and $\dbF := \dbF^B$. Let $A\subset \dbR$ be a domain for the control values, and $\cA$ the set of $\dbF^B$-progressively measurable $A$-valued processes. For notational simplicity, we assume all processes are one dimensional, but all the results in the paper can be easily extended to multiple dimensions. We shall use weak formulation. Set 
\bea
\label{X}
X \equiv B.
\eea
For $\hat\a:= (\a^0,   \a)\in \cA^{n+1}$, denote 
\bea
\label{Girsanov}
\left.\ba{c}
\dis B^{\hat\a}_t := B_t - \int_0^t b(s, X_s, \hat \a_s) ds,\\
\dis\mbox{where}\q  {d\dbP^{\hat \a}\over d\dbP} := M^{\hat\a}_T := \exp\Big(\int_0^T b(s, X_s, \hat \a_s) dB_s - {1\over 2} \int_0^T |b(s, X_s, \hat \a_s)|^2 ds\Big).
\ea\right.
\eea
For $i=0,\cds, n$,
\bea
\label{Ji}
J_i(\hat\a) := \dbE^{\dbP^{\hat \a}}\Big[g_i(X_T) + \int_0^T f_i(s, X_s, \hat\a_s) ds\Big].
\eea
It is well known that $J_i(\hat \a) = Y^{\hat\a, i}_0$, where
\bea
\label{Yai}
\left.\ba{c}
\dis Y^{\hat\a, i}_t = g_i(X_T) + \int_t^T F_i(s, X_s, \hat \a_s, Z^{\hat\a, i}_s) ds - \int_t^T Z^{\hat\a, i}_sdB_s, \\
\dis \mbox{where}\q F_i(t, x, \hat a, z_i) := f_i(t, x, \hat a) + b(t, x, \hat a)z_i.
\ea\right.
\eea
Here $a=(a_1, \cds, a_n)\in A^n$  and $\hat a = (a_0, a)\in A^{n+1}$. Introduce further that, for $i=1,\cds, n$,
\bea
\label{barYai}
\left.\ba{c}
\dis \ol Y^{\hat\a, i}_t = g_i(X_T) + \int_t^T \ol F_i(s, X_s, \hat \a_s, \ol Z^{\hat\a, i}_s) ds - \int_t^T \ol Z^{\hat\a, i}_sdB_s, \\
\dis \mbox{where}\q \ol F_i(t, x, \hat a, z_i) := \sup_{a_i'\in A} F_i(t, x, a_0, a^{-i}, a_i', z_i).
\ea\right.
\eea
As standard in the literature, for the purpose of studying the leader's problem, we rewrite the above BSDE as a forward equation. That is, for any $\b = (\b^1, \cds, \b^n) \in (\dbL^2(\dbF))^n$, define
\bea
\label{cXi}
\cX^{\hat\a, \b, i}_t :=  -  \int_0^t \ol F_i(s, X_s, \hat \a_s, \b^{i}_s) ds + \int_0^t  \b^{i}_sdB_s
\eea
Then, denoting $y := \ol Y^{\hat\a}_0 := (\ol Y^{\hat\a, 1}_0,\cds, \ol Y^{\hat\a, n}_0)$, $\b := \ol Z^{\hat\a} := (\ol Z^{\hat\a,1},\cds, \ol Z^{\hat\a,n})$, we have
\bea
\label{Y=cX}
 \ol Y^{\hat\a, i}_t = y_i + \cX^{\hat\a, \b, i}_t.
\eea
We note that $\ol F_i$ depends on $\hat a$ only through $(a_0, a^{-i})$, and $\cX^{\hat\a, \b, i}$ depends on $\b$ only through $\b^i$. 

 For technical simplicity, we impose the following strong conditions. We note that these conditions, especially the boundedness requirements, can be weakened significantly.

\begin{assum}
\label{assum-0}
The coefficients $b, f_i, g_i$ are bounded and uniformly continuous in all variables.  
\end{assum}
Clearly, under the above assumption, Girsanov theorem holds true for \reff{Girsanov}, and $\ol F$ is Lipschitz continuous in $z$ and thus BSDE \reff{barYai} is well-posed.

We first characterize the true Nash equilibria. The following result is obvious.
\begin{lem}
\label{lem-Nash}
Let Assumption \ref{assum-0} hold and fix $\a^0\in \cA$. Then $\a^*\in \cE(\a^0)$ if and only if 
\bea
\label{NEcharacterization1}
\ol F_i(s, X_s, \a^0_s,   \a^*_s, \ol Z^{(\a^0,   \a^*), i}_s) =  F_i(s, X_s, \a^0_s,   \a^*_s, \ol Z^{(\a^0,   \a^*), i}_s), ~ds\times d\dbP-a.s., i=1,\cds, n,
\eea
and if and only if there exist $y\in \dbR^n$ and $\b\in (\dbL^2(\dbF))^n$ such that
\bea
\label{NEcharacterization2}
\left.\ba{c}
\dis \ol F_i(s, X_s, \a^0_s,   \a^*_s, \b^{ i}_s) =  F_i(s, X_s, \a^0_s,   \a^*_s, \b^{i}_s), ~ds\times d\dbP-a.s.,\ms\\
\dis y_i + \cX^{(\a^0,   \a^*), \b, i}_T = g_i(X_T),~ a.s.,
\ea\right.~ i=1,\cds, n.
\eea
\end{lem}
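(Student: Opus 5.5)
We would prove the two equivalences in turn, starting from the identification of the followers' best-response values with the BSDE \reff{barYai}. For fixed $i$ and fixed $(\a^0,\a^{*,-i})$, the key claim is that
\[
v_i(\a^0,\a^{*,-i}) = \ol Y^{(\a^0,\a^*),i}_0 ,
\]
where the right side depends on $\a^*$ only through $\a^{*,-i}$, because $\ol F_i$ does not depend on $a_i$. This is the standard comparison/verification argument. For any $\a^i\in\cA$ we have $J_i(\a^0,\a^{*,-i},\a^i)=Y^{i}_0$, where $Y^i$ solves \reff{Yai} with generator $F_i(\cdot,\a^0,\a^{*,-i},\a^i,z)$. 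Since $F_i\le \ol F_i$ pointwise, and $\ol F_i$ is Lipschitz in $z$ by Assumption \ref{assum-0}, the comparison theorem for BSDEs gives $Y^i_0\le \ol Y^{i}_0$. Conversely, given $\e>0$, we need a progressively measurable $\a^{i,\e}$ with
\[
F_i(s,X_s,\a^0_s,\a^{*,-i}_s,\a^{i,\e}_s,\ol Z^i_s)\ge \ol F_i(s,X_s,\a^0_s,\a^{*,-i}_s,\ol Z^i_s)-\e .
\]
We would obtain it by a measurable selection argument; continuity of $F_i$ in $a_i$ lets us restrict to a countable dense subset of $A$ and take the first index achieving the $\e$-bound. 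Comparison (or the linearization estimate with the bounded coefficient $b$) then gives $Y^{i,\e}_0\ge \ol Y^i_0-C\e$, hence the equality.

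With this identity, the first equivalence follows from the same BSDE comparison. Set $\d_s:=\ol F_i(s,X_s,\a^0_s,\a^*_s,\ol Z^{i}_s)-F_i(s,X_s,\a^0_s,\a^*_s,\ol Z^{i}_s)\ge0$. Because $F_i$ is linear in $z$ with bounded coefficient $b$, the difference $\ol Y^i-Y^{(\a^0,\a^*),i}$ solves a linear BSDE with generator $\d_s+b\,(\ol Z^i-Z^i)_s$. Under $\dbP^{(\a^0,\a^*)}$ this yields
\[
\ol Y^i_0-J_i(\a^0,\a^*)=\dbE^{\dbP^{(\a^0,\a^*)}}\Big[\int_0^T\d_s\,ds\Big].
\]
Thus $J_i(\a^0,\a^*)=v_i(\a^0,\a^{*,-i})$ holds iff $\d=0$, $ds\times d\dbP^{(\a^0,\a^*)}$-a.s. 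Since $\dbP^{(\a^0,\a^*)}\sim\dbP$ by Girsanov with bounded $b$, this is the same as $\d=0$, $ds\times d\dbP$-a.s., which is \reff{NEcharacterization1}.

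For the second equivalence, if \reff{NEcharacterization1} holds we take $y:=\ol Y^{(\a^0,\a^*)}_0$ and $\b:=\ol Z^{(\a^0,\a^*)}$. Then \reff{Y=cX} at $t=T$ gives $y_i+\cX^{(\a^0,\a^*),\b,i}_T=g_i(X_T)$, and the first line of \reff{NEcharacterization2} is exactly \reff{NEcharacterization1}. Conversely, suppose $(y,\b)$ satisfies \reff{NEcharacterization2}. The process $Y_t:=y_i+\cX^{(\a^0,\a^*),\b,i}_t$ is adapted, has terminal value $g_i(X_T)$, and satisfies $dY_t=-\ol F_i(t,X_t,\hat\a_t,\b^i_t)dt+\b^i_tdB_t$, so $(Y,\b^i)$ solves \reff{barYai}. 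We must also check that $Y$ is square integrable; this holds because $\b\in\dbL^2(\dbF)$ and $\ol F_i$ has linear growth in $z$. Uniqueness of the BSDE then gives $\b^i=\ol Z^{(\a^0,\a^*),i}$, and the first line of \reff{NEcharacterization2} becomes \reff{NEcharacterization1}.

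The only delicate step is the measurable $\e$-optimal selection in the identification of $v_i$. This is where continuity of $F_i$ in $a_i$ from Assumption \ref{assum-0} is used. Everything else is comparison and uniqueness for Lipschitz BSDEs.
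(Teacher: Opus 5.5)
Your proof is correct and is the standard argument the paper has in mind; the paper simply calls the lemma obvious and gives no proof. You identify $v_i(\a^0,\a^{*,-i})$ with $\ol Y^{(\a^0,\a^*),i}_0$ via comparison plus a countable-dense-set $\e$-optimal selection, read off the first equivalence from the linear BSDE for $\ol Y^i-Y^{(\a^0,\a^*),i}$ under the equivalent measure $\dbP^{(\a^0,\a^*)}$, and get the second from BSDE uniqueness once $y_i+\cX^{(\a^0,\a^*),\b,i}$ is checked to be square integrable, which is exactly what that step needs.
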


We next investigate approximate Nash equilibria. Denote
\bea
\label{Ierror}
\left.\ba{c}
\dis I(\hat\a, y,\b) := \sum_{i=1}^n \dbE\Big[ \int_0^T |\ol F_i(s, X_s, \hat \a_s, \b^{ i}_s) - F_i(s, X_s, \hat\a_s, \b^{i}_s)|^{3\over 2} ds \\
\dis + \big|y_i + \cX^{\hat\a, \b, i}_T - g_i(X_T)\big|^2\Big]. 
\ea\right.
\eea
The following result follows from \cite[Theorem 4]{FRZ} and plays a crucial role in this paper.
\begin{prop}
\label{prop-Nashe}
Assume Assumption \ref{assum-0} holds and fix $\a^0\in \cA$ and $\e>0$. Then there exists $\d>0$ such that the following hold.

\no(i) For any $  \a^\d\in\cE_\d(\a^0)$,  $I(\a^0,  \a^\d,  J(\a^0, \a^\d), \ol Z^{\a^0, \a^\d}) \le \e$, where $J := (J_1, \cds, J_n)$.

\no(ii) For any $  \a\in \cA^n$, if there exist $y$, $\b$ such that $I(\a^0,  \a^\d, y, \b)\le \d$, then $  \a\in \cE_\e(\a^0)$. 
\end{prop}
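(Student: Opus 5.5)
Both parts rest on the same two facts. In the weak formulation, player $i$'s best response value $v_i(\a^0,\a^{-i})$ equals $\ol Y^{\hat\a,i}_0$, and $J_i(\hat\a)=Y^{\hat\a,i}_0$. So the $\e$-equilibrium gap for player $i$ is the gap between two BSDEs, \reff{Yai} and \reff{barYai}. Their terminal conditions agree, and their generators differ by
\[
\Delta^i_s := \ol F_i(s,X_s,\hat\a_s,\ol Z^{\hat\a,i}_s)-F_i(s,X_s,\hat\a_s,\ol Z^{\hat\a,i}_s)\ge 0 .
\]
The first fact is the linearization
\[
\ol Y^{\hat\a,i}_0 - Y^{\hat\a,i}_0 = \dbE^{\dbP^{\hat\a}}\Big[\int_0^T \Delta^i_s\,ds\Big].
\]
It follows by subtracting the two BSDEs and absorbing $b(\ol Z^i - Z^i)$ into the Girsanov change of measure \reff{Girsanov}. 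The second fact is that $b$ is bounded, so $M^{\hat\a}_T$ has moments of every order, bounded uniformly in $\hat\a$. This lets us pass between $\dbE$ and $\dbE^{\dbP^{\hat\a}}$ at the cost of H\"older exponents. This is why the exponent $3/2$ appears in $I$: we write $\dbE^{\dbP^{\hat\a}}[\int\Delta]\le C\,\dbE[\int \Delta^{3/2}]^{2/3}$ using $M_T\in L^3$, and conversely.

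For (i), take $\a^\d\in\cE_\d(\a^0)$, $y=J(\a^0,\a^\d)$ and $\b=\ol Z^{\a^0,\a^\d}$. The terminal term in $I$ is handled by \reff{Y=cX}. We have $y_i+\cX^{\hat\a,\b,i}_T=\ol Y^{\hat\a,i}_T + (J_i-\ol Y^{\hat\a,i}_0)$, and since $\ol Y^{\hat\a,i}_T=g_i(X_T)$ and $|J_i-\ol Y^{\hat\a,i}_0|\le\d$, this term contributes at most $n\d^2$. For the generator term, the linearization gives $\dbE^{\dbP^{\hat\a}}[\int\Delta^i]\le\d$. Since $\Delta^i$ is bounded (Assumption \ref{assum-0} plus the Lipschitz bound in $z$) and nonnegative, we get $\Delta^{3/2}\le C\Delta^{1/2}\cdot\Delta$. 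Rather than using boundedness in $z$, which fails, I would use H\"older: $\dbE[\int \Delta^{3/2}]\le \dbE^{\dbP^{\hat\a}}[\int\Delta]^{a}\cdot(\text{moment of }\Delta\text{ and }1/M_T)$. The required moments come from the BSDE estimate $\|\ol Z\|_{L^p}\le C_p$, so the generator term is bounded by $C\d^{a}$ for some $a>0$. Choosing $\d$ small then gives $I\le\e$.

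For (ii), suppose $I(\hat\a,y,\b)\le\d$. Define $\tilde\Delta^i:=\ol F_i(\cdot,\b^i)-F_i(\cdot,\b^i)$. Then $(y_i+\cX^{\hat\a,\b,i},\b^i)$ solves a BSDE with generator $\ol F_i$ and terminal value $\xi_i:=y_i+\cX_T^{\hat\a,\b,i}$. The same pair also solves the BSDE with generator $F_i+\tilde\Delta^i$. Stability of Lipschitz BSDEs gives $|y_i-\ol Y^{\hat\a,i}_0|\le C\|\xi_i-g_i(X_T)\|_{L^2}$. The standard a priori estimate under $\dbP$, using the $L^{3/2}$ norm of $\tilde\Delta$ together with the measure change, gives $|y_i - Y^{\hat\a,i}_0|\le C(\|\xi_i-g_i\|_{L^2}+\|\tilde\Delta^i\|_{L^{3/2}})$. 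Combining the two, $v_i-J_i=\ol Y^{\hat\a,i}_0-Y^{\hat\a,i}_0\le C\d^{c}$ for some $c>0$, which is at most $\e$ for small $\d$, so $\a\in\cE_\e(\a^0)$.

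The main obstacle is the a priori estimate in (ii) in the non-square-integrable norm $L^{3/2}$ for the generator. The standard BSDE estimates are in $L^2$. I would avoid them by linearizing directly: the difference of $y_i+\cX^i$ and $Y^i$ solves a linear BSDE with bounded coefficient $b$. Its initial value is $\dbE^{\dbP^{\hat\a}}[\xi_i-g_i+\int\tilde\Delta^i]$, and applying H\"older with $M_T\in L^3$ gives the required $L^{3/2}$ and $L^2$ control. The same linearization works for the $\ol F$ comparison, using that $\ol F_i$ is Lipschitz in $z$ with constant $\|b\|_\infty$. Finally, $\d$ can be chosen uniformly in $\a^0$, since every constant depends only on $\|b\|_\infty$, $\|f\|_\infty$ and $\|g\|_\infty$.
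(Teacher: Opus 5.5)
Your argument is correct, but it does not follow the paper's route. The paper gives no argument of its own and simply invokes \cite[Theorem 4]{FRZ}. You instead give a self-contained proof. You linearize the gap $\ol Y^{\hat\a,i}_0-Y^{\hat\a,i}_0$, and in (ii) the differences between $y_i+\cX^{\hat\a,\b,i}$ and $\ol Y^{\hat\a,i}$, $Y^{\hat\a,i}$, through Girsanov. You then pass between $\dbE$ and $\dbE^{\dbP^{\hat\a}}$ by H\"older, using that $M^{\hat\a}_T$ has moments of all positive and negative orders. The citation is shorter and ties the statement to the set-value theory of \cite{FRZ}. Your route is elementary, gives an explicit rate ($\d$ of order $\e^2$), and shows that $\d$ depends only on $\|b\|_\infty,\|f_i\|_\infty,\|g_i\|_\infty$, not on $\a^0$. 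That uniformity is convenient, since Step 2 of the proof of Theorem~\ref{thm-Vn} fixes $\d$ before choosing $\a^0$.

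Two points in (i) should be cleaned up.

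First, drop the sentence asserting that $\Delta^i$ is bounded; one only has $0\le\Delta^i\le 2\|f_i\|_\infty+2\|b\|_\infty|\ol Z^{\hat\a,i}|$.

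Second, state the moment bound precisely. What you need is $\dbE[(\int_0^T|\ol Z^{\hat\a,i}_t|^2dt)^p]\le C_p$ uniformly in $\hat\a$. This follows from $|\ol Y^{\hat\a,i}|\le\|g_i\|_\infty+T\|f_i\|_\infty$, the resulting uniform BMO bound on $\int\ol Z^{\hat\a,i}dB$, and the energy inequality. A bound on $\ol Z$ in $L^p(dt\times d\dbP)$ with $p>2$ is not available in general when $g_i$ is merely uniformly continuous. So the H\"older exponents must be chosen so that no power of $\Delta^i$ above $2$ appears, for instance
\[
\dbE\Big[\int_0^T(\Delta^i_s)^{3\over 2}ds\Big]\le\Big(\dbE^{\dbP^{\hat\a}}\Big[\int_0^T\Delta^i_s\,ds\Big]\Big)^{1\over 2}\Big(\dbE\Big[(M^{\hat\a}_T)^{-1}\int_0^T(\Delta^i_s)^2ds\Big]\Big)^{1\over 2}\le C\d^{1\over 2}.
\]
The last step uses three facts:
\begin{itemize}
\item $\dbE^{\dbP^{\hat\a}}[\int_0^T\Delta^i_sds]=v_i-J_i\le\d$;
\item $(M^{\hat\a}_T)^{-1}\in L^2$;
\item $\int_0^T(\Delta^i_s)^2ds\le C(1+\int_0^T|\ol Z^{\hat\a,i}_s|^2ds)$, which is bounded in $L^2$ uniformly in $\hat\a$.
\end{itemize}

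This also corrects your explanation of the exponent $3/2$. Part (ii) works with any exponent $p>1$; that is where $M^{\hat\a}_T\in L^{p/(p-1)}$ enters. The restriction $p<2$ is what part (i) needs, because $\Delta^i$ is only controlled in $L^2$ in time.
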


Recall \reff{leader} and denote
\bea
\label{Vlambda}
V_\l :=  \sup_{\a^0\in \cA} v_\l(\a^0),\q v_\l(\a^0) := \inf_{\a\in \cA^n, y\in \dbR^n, \b\in (\dbL^2(\dbF))^n}  \Big[J_0(\a^0, \a) + \l  I(\a^0, \a, y, \b)\Big]
\eea
We note that, in \reff{leader} the controls $\a^\e$ satisfy the $\e$-equilibrium constraint, while the above optimization problem is unconstrained and thus is a lot easier to solve. 

\begin{thm}
\label{thm-Vn}
Assume Assumption \ref{assum-0} holds and $\cE_\e(\a^0) \neq \emptyset$ for any $\a^0$ and $\e>0$. Then 
\bea
\label{Vlambda-conv}
V_\l \uparrow V_0,\q\mbox{as}~ \l\uparrow \infty.
\eea
\end{thm}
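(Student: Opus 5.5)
The plan is to show three things: monotonicity of $V_\l$ in $\l$, the upper bound $V_\l\le V_0$ for every $\l$, and the lower bound $\lim_{\l\to\infty}V_\l\ge V_\e$ for every $\e>0$. Since $V_\e\uparrow V_0$ as $\e\downarrow0$, the last two give \reff{Vlambda-conv}.

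\emph{Monotonicity and finiteness.} Since $I\ge 0$, the map $\l\mapsto J_0(\a^0,\a)+\l I(\a^0,\a,y,\b)$ is nondecreasing for each fixed $(\a^0,\a,y,\b)$. Hence $v_\l(\a^0)$ and $V_\l$ are nondecreasing in $\l$. By Assumption \ref{assum-0}, $|J_0|\le C:=\|g_0\|_\infty+T\|f_0\|_\infty$. Because $\cE_\e(\a^0)\neq\emptyset$, all of $v_\e(\a^0)$, $V_\e$, $V_0$ lie in $[-C,C]$. Also, as noted after \reff{leader}, $\cE_\e(\a^0)$ shrinks as $\e\downarrow 0$, so $V_\e\le V_0$ for every $\e>0$.

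\emph{Upper bound $V_\l\le V_0$.} Fix $\l$, $\a^0$ and $\e'>0$, and let $\d=\d(\a^0,\e')$ be given by Proposition \ref{prop-Nashe}(i). Take any $\a^\d\in\cE_\d(\a^0)$, which exists by assumption. Plugging the admissible triple $\big(\a^\d, J(\a^0,\a^\d),\ol Z^{\a^0,\a^\d}\big)$ into \reff{Vlambda} and using Proposition \ref{prop-Nashe}(i) gives
$$v_\l(\a^0)\le J_0(\a^0,\a^\d)+\l\e'.$$
Taking the infimum over $\a^\d\in\cE_\d(\a^0)$ yields
$$v_\l(\a^0)\le v_\d(\a^0)+\l\e'\le V_\d+\l\e'\le V_0+\l\e'.$$
The dependence of $\d$ on $\a^0$ is harmless here, because the final bound $V_0+\l\e'$ does not depend on $\a^0$. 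Letting $\e'\to0$ and then taking the supremum over $\a^0$ gives $V_\l\le V_0$.

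\emph{Lower bound.} Fix $\e>0$ and $\a^0$, and let $\d=\d(\a^0,\e)$ be given by Proposition \ref{prop-Nashe}(ii). Take any $(\a,y,\b)$ and split into two cases.
\begin{itemize}
\item If $I(\a^0,\a,y,\b)\le\d$, then $\a\in\cE_\e(\a^0)$, so $J_0(\a^0,\a)+\l I\ge J_0(\a^0,\a)\ge v_\e(\a^0)$.
\item If $I>\d$, then $J_0+\l I\ge -C+\l\d\ge C\ge v_\e(\a^0)$ as soon as $\l\ge 2C/\d$.
\end{itemize}
Hence $v_\l(\a^0)\ge v_\e(\a^0)$ for all $\l\ge 2C/\d(\a^0,\e)$, and therefore $\lim_{\l}V_\l\ge\lim_\l v_\l(\a^0)\ge v_\e(\a^0)$. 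This threshold is not uniform in $\a^0$, which is the one point needing care. It causes no problem because we only need $\lim_\l V_\l\ge v_\e(\a^0)$ for each $\a^0$ separately. Taking the supremum over $\a^0$ afterwards gives $\lim_\l V_\l\ge V_\e$, and letting $\e\to0$ gives $\lim_\l V_\l\ge V_0$. Combined with the upper bound and monotonicity, this proves $V_\l\uparrow V_0$.

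The main (mild) obstacle is exactly this non-uniformity of $\d$ in $\a^0$ in both directions of Proposition \ref{prop-Nashe}. The argument handles it by always fixing $\a^0$ first and passing to the supremum over $\a^0$ only at the end.
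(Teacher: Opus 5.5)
Your proof is correct. The upper bound $V_\l\le V_0$ is essentially the paper's Step 1: you plug the triple $(\a^\d, J(\a^0,\a^\d), \ol Z^{\a^0,\a^\d})$ from Proposition \ref{prop-Nashe}(i) into $v_\l(\a^0)$, only taking the supremum over $\a^0$ last instead of first picking an $\e$-optimal $\a^0$.

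The lower bound is where you take a genuinely different route. The paper works as follows:
\begin{itemize}
\item it picks $\frac1\l$-optimal triples $(\a^\l,y^\l,\b^\l)$ for $v_\l(\a^0)$;
\item it derives an upper bound $v_\l(\a^0)\le C_0$ that is uniform in $\l$, using Proposition \ref{prop-Nashe}(i) at level $\frac1\l$ and $\cE_{\tilde\d}(\a^0)\neq\emptyset$;
\item it deduces $I(\a^0,\a^\l,y^\l,\b^\l)\le C_1/\l$;
\item it invokes Proposition \ref{prop-Nashe}(ii) to conclude that these near-minimizers $\a^\l$ are themselves $\e$-equilibria, which gives $v_\e(\a^0)\le v_\l(\a^0)+\frac1\l$.
\end{itemize}

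Your dichotomy on the size of $I$ instead bounds every candidate triple from below at once. It yields the exact inequality $v_\l(\a^0)\ge v_\e(\a^0)$ for $\l\ge 2C/\d(\a^0,\e)$. It needs only Proposition \ref{prop-Nashe}(ii), the boundedness of $J_0$, and $\cE_\e(\a^0)\neq\emptyset$, with no near-optimizers and no a priori bound on $v_\l(\a^0)$. The paper's version has a side benefit: it shows that approximate minimizers of the penalized problem are approximate equilibria. That matters if one wants to use the penalized problem to actually compute controls.

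Your ordering of quantifiers is also cleaner. In the paper's Step 2, the $\d$ from Proposition \ref{prop-Nashe} is fixed before $\a^0$ is chosen (via $v_\e(\a^0)\ge V_\e-\d$), even though that proposition produces a $\d$ depending on $\a^0$. This is easily repaired by using a separate tolerance for the choice of $\a^0$. Your ordering avoids the issue altogether: you fix $\a^0$, show $\lim_\l V_\l\ge v_\e(\a^0)$, and only then take the supremum.
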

\proof Under Assumption \ref{assum-0}, it is clear that $V_\l$ is finite and increasing in $\l$.  

{\bf Step 1.} Fix $\l$. For any $\e>0$, choose $\a^0$ such that $v_\l(\a^0)\ge V_\l - \e$.  Let $\d>0$ be small enough such that Proposition \ref{prop-Nashe} (i) holds. Choose $\a^\d\in \cE_\d(\a^0)$ such that $J_0(\a^0, \a^\d) \le  v_\d(\a^0) + \e$.  Set $y := J(\a^0, \a^\d)$ and $\b :=  \ol Z^{\a^0, \a^\d}$. By Proposition \ref{prop-Nashe} (i) we have $I(\a^0,  \a^\d,  y, \b) \le \e$. Then
\beaa
V_\l &\le& v_\l(\a^0)+\e \le J_0(\a^0, \a^\d) + \l  I(\a^0, \a^\d, y, \b) +\e \\
&\le& v_\d(\a^0) + \e + \l \e +\e \le V_\d + (\l+2)\e\le V_0+(\l+2)\e.
\eeaa
Since $\e>0$ is arbitrary, we have $V_\l\le V_0$. Moreover, since $\l$ is arbitrary, we obtain $\dis\lim_{\l\to \infty} V_\l \le V_0$.

{\bf Step 2.} Fix $\e>0$. Let $\d>0$ be small enough such that both Proposition \ref{prop-Nashe} (i) and (ii) hold, and choose $\a^{0}$ such that $v_\e(\a^{0}) \ge V_\e -\d$. For any $\l\ge 1$, choose $(\a^\l, y^\l, \b^\l)$ such that 
\bea
\label{vlest1}
J_0(\a^{0}, \a^\l) + \l  I(\a^{0}, \a^\l, y^\l, \b^\l) \le v_\l(\a^0) + {1\over \l}.
\eea
 Setting $\tilde\e := {1\over \l}$ in Proposition \ref{prop-Nashe} (i) and for corresponding $\tilde\d>0$, since  $\cE_{\tilde\d}(\a^0)\neq \emptyset$, there exists $\tilde \a^\l$ such that $I(\a^{0}, \tilde \a^\l, \tilde y^\l, \tilde\b^\l) \le {1\over \l}$, where $\tilde y^\l := J(a^0, \tilde \a^\l)$, $\tilde \b^\l := \ol Z^{\a^0, \tilde \a^\l}$. Then 
\beaa
v_\l(\a^0) \le J_0(\a^0, \tilde \a^\l) + \l  I(\a^{0}, \tilde \a^\l, \tilde y^\l, \tilde\b^\l) \le  J_0(\a^0, \tilde \a^\l) + 1 \le C_0,
\eeaa
for a constant $C_0$ depending on the bounds in Assumption \ref{assum-0}. By \reff{vlest1}, this implies that
\beaa
\l I(\a^0, \a^\l, y^\l, \b^\l) \le C_0 +{1\over \l} - J_0(\a^0, \a^\l) \le C_1,
\eeaa
 for another constant $C_1$ depending on the bounds in Assumption \ref{assum-0}. Then 
 \beaa
 0\le I(\a^0, \a^\l, y^\l, \b^\l) \le {C_1\over \l}\le \d,
 \eeaa
  for $\l$ large enough. By Proposition \ref{prop-Nashe} (ii) we have $\a^\l \in \cE_\e(\a^0)$. Thus
\beaa
V_\e &\le& v_\e(\a^{0}) + \d \le J_0(\a^0, \a^\l) +\d \le J_0(\a^{0}, \a^\l) + \l  I(\a^{0}, \a^\l, y^\l, \b^\l) +\d \\
&\le& v_\l(\a^0) + {1\over \l} +\d \le V_\l + {1\over \l} +\d.
\eeaa
Then $\dis V_\e\le \lim_{\l\to \infty} V_\l + \d$. Since $\d>0$ can be arbitrarily small, we have $\dis V_\e\le \lim_{\l\to \infty} V_\l$. Send $\e\to 0$, we obtain $\dis  V_0\le \lim_{\l\to \infty} V_\l$, and hence equality holds. 
\qed

\section{The time inconsistency issue}
\label{sect-inconsistency}
\setcounter{equation}{0} 
We now turn to the problem \reff{Vlambda}, which is an $\sup\inf$ problem without constraint, and thus has the feature of zero sum games. However, the control $y$ is deterministic, which actually complicates the problem. To see this, we note that  in \reff{Ierror} and \reff{Vlambda} $y$ appears only in the term $\dbE\big[|y_i+\cX^{\hat\a, \b, i}_T - g_i(X_T)|^2\big]$. It is clear that
\bea
\label{optimality}
\left.\ba{c}
\dis \inf_{y_i} \dbE\big[|y_i+\cX^{\hat\a,\b, i}_T - g_i(X_T)|^2\big] = Var\big(\cX^{\hat\a,\b, i}_T - g_i(X_T)\big), \ms\\
\dis \mbox{with optimal control}~ y_i^* = \dbE[g_i(X_T) -\cX^{\hat\a,\b, i}_T].
\ea\right.
\eea
Then  \reff{Vlambda}  becomes, recalling \reff{Girsanov} and \reff{Ji},
\bea
\label{Vlambda-2}
\left.\ba{c}
\dis V_\l = \sup_{\a^0\in \cA}\inf_{(\a, \b)\in\cA^n\times (\dbL^2(\dbF))^n} J_\l(\hat\a, \b),\\
\dis \mbox{where}\q J_\l(\hat\a, \b):= \dbE\big[M^{\hat\a}_T g_0(X_T)\big]  + \l \sum_{i=1}^n Var\big(\cX^{\hat\a, \b, i}_T - g_i(X_T)\big)\\
\dis \q +\dbE\Big[\int_0^T \big[M^{\hat\a}_s f_0(s, X_s, \hat\a_s) +\l \sum_{i=1}^n  |\ol F_i(s, X_s, \hat \a_s, \b^{ i}_s) - F_i(s, X_s, \hat\a_s, \b^{i}_s)|^{3\over 2} \big]ds \Big].
\ea\right.
\eea
This problem is time inconsistent in the standard approach for two reasons. First, due to the involvement of the variance term,  it has the same feature as the mean variance problem, which is well known to be time inconsistent and thus the PDE approach fails in the standard sense; Second, although the $\sup\inf$ structure shares the feature of zero sum games, here the controls $\a^0$ and $(\a, \b)$ are in open loop sense, then even without the variance term, the DPP fails and thus we cannot apply the HJB-Issacs equation to characterize the value.  

The mean variance feature, or say the time inconsistency due to the law involvement, can be overcome by  raising the problem to Wasserstein space of probability measures, see e.g. \cite{WZ}. To be precise, consider the $n+2$ dimensional random vector $\bX =(X, \cX^1, \cds, \cX^n, M)$. Given $(t, \mu) \in [0, T]\times \cP_2(\dbR^{n+2})$, and $\eta = (\eta_0,\eta_1, \cds, \eta_n, \eta_{n+1})\in (\dbL^2(\cF_t))^{n+2}$ with $\cL_\eta = \mu$, define
\bea
\label{Vlmu}
\left.\ba{lll}
\dis V_\l(t, \mu) := \sup_{\a^0\in \cA}\inf_{(\a, \b)\in\cA^n\times (\dbL^2(\dbF))^n} \Big\{ \dbE\big[M^{t, \eta, \hat\a}_T g_0(X^{t, \eta}_T)\big] + \l \sum_{i=1}^n Var\big(\cX^{t, \eta, \hat\a, \b, i}_T - g_i(X^{t, \eta}_T)\big)\\
\dis \q + \dbE\Big[\int_t^T \big[ M^{t, \eta,\hat\a}_s f_0(s, X^{t,\eta}_s, \hat\a_s) +\l \sum_{i=1}^n  |\ol F_i(s, X^{t,\eta}_s, \hat \a_s, \b^{ i}_s) - F_i(s, X^{t,\eta}_s, \hat\a_s, \b^{i}_s)|^{3\over 2} \big]ds\Big]  \Big\},
\ea\right.
\eea
where, for $s\in [t, T]$,
\bea
\label{XcXMt}
\left.\ba{lll}
\dis X^{t, \eta}_s = \eta_{0} + B_s-B_t;\\
\dis\cX^{t, \eta, \hat\a, \b, i}_s = \eta_i -  \int_t^s \ol F_i(r, X^{t, \eta}_r, \hat \a_r, \b^{i}_r) dr + \int_t^s  \b^{i}_rdB_r,\q i=1,\cds, n;\\
\dis M^{t,\eta,\hat\a}_s = \eta_{n+1} + \int_t^s M^{t,\eta,\hat\a}_r b(r, X^{t,\eta}_r, \hat\a_r) dB_r.
\ea\right.
\eea

This dynamic value function $V_\l(t, \mu)$, however, is still time inconsistent and thus does not satisfy the naturally associated HJB-Issacs equation on the Wasserstein space. The main reason is that here the controls $\a^0$ and $(\a, \b)$ are all open loop. Before we analyze this in details, let's explain why the $\sup\sup$ problem \reff{leader-sup} is easier. 

\begin{rem}
\label{rem-supsup}
One can easily show that the counterpart of $V_\l$ for \reff{leader-sup} is, by subtracting $I$ for the supremum problem:
\bea
\label{supsup}
V'_\l :=  \sup_{\a^0}  \sup_{\a, y, \b}  \Big[J_0(\a^0, \a) - \l  I(\a^0, \a, y, \b)\Big].
\eea

\no(i) It is clear that  $V'_\l = \sup_y V'_\l(y)$ where
\beaa
\left.\ba{lll}
\dis  V'_\l(y) := \sup_{\a^0, \a,  \b}  \Big[J_0(\a^0,\a) - \l  I(\a^0,\a, y, \b)\Big]\\
\dis\qq\q =  \sup_{\hat\a,  \b}  \dbE\Big[M^{\hat\a}_T g_0(X_T) - \l \sum_{i=1}^n \big|y_i + \cX^{\hat\a, \b, i}_T - g_i(X_T)\big|^2\\
\dis\qq\qq + \int_0^T \big[M^{\hat\a}_s f_0(s, X_s, \hat\a_s)  - \l\sum_{i=1}^n  |\ol F_i(s, X_s, \hat \a_s, \b^{ i}_s) - F_i(s, X_s, \hat\a_s, \b^{i}_s)|^{3\over 2} \big]ds\Big]. 
\ea\right.
\eeaa
For fixed $y$, $V'_\l(y)$ is a standard control problem (with open loop controls) and thus can be solved through a standard HJB equation. Then it is straightforward to solve the finite dimensional optimization problem $V'_\l = \sup_y V_\l'(y)$. We emphasize that the main trick here is that, for $\sup\sup$, one may switch the orders of optimization and thus one can fix $y$ and optimize over the dynamic controls $\a^0, \a, \b$. 

\no(ii) Alternatively,  by \reff{optimality} we can see that
 \bea
\label{Vlambda-2sup}
\left.\ba{c}
\dis V'_\l = \sup_{\a^0}\sup_{\a, \b}J_\l'(\hat\a, \b),\q J_\l'(\hat\a, \b):= \dbE\big[M^{\hat\a}_T g_0(X_T)\big]  - \l \sum_{i=1}^n Var\big(\cX^{\hat\a, \b, i}_T - g_i(X_T)\big)\\
\dis \qq +\int_0^T \big[M^{\hat\a}_s f_0(s, X_s, \hat\a_s) -\l \sum_{i=1}^n  |\ol F_i(s, X_s, \hat \a_s, \b^{ i}_s) - F_i(s, X_s, \hat\a_s, \b^{i}_s)|^{3\over 2} \big]ds \Big] \Big\}.
\ea\right.
\eea
This is a mean field type of control problem. Since for control problems open loop and closed loop controls lead to the same value function, we see that  $V'_\l = V'_\l(0,  \d_{(0,  0, \cds, 0, 1)})$ where the dynamic value function $V'_\l(t, \mu)$ satisfies an HJB equation on the Wasserstein space of probability measures, typically in certain viscosity sense.  For the latter we refer to \cite{ZTZ} and the references therein.
\end{rem}

We now come back to \reff{Vlmu}-\reff{XcXMt} and explain why the open loop controls lead to the time inconsistency. One can easily see that the zero sum game is associated with the following HJB equation on the Wasserstein space: for $\bx = (x_0, x_1, \cds, x_n, x_{n+1})$,
\bea
\label{V1-HJB}
\left.\ba{c}
\dis \pa_t V_1 + H_1(t, \mu, \pa_\mu V_1(t, \mu,\cd), \pa_{\bx\mu} V_1(t, \mu, \cd))=0,\q (t, \mu)\in [0, T]\times \cP_2(\dbR^{n+2}),\\
\dis \mbox{where}\q H_1(t, \mu, \f, \psi) := \int_{\dbR^{n+2}} \sup_{a_0\in A}\inf_{(a,\b)\in A^n\times \dbR^n} h(t, \mu, \bx, \f(\bx), \psi(\bx), \hat a, \b) \mu(d\bx),\\
\dis  h(t, \mu, \bx, z, \g, \hat a,\b) := {1\over 2}\sum_{i=0, 1,\cds, n, n+1} ~\sum_{j=0, 1,\cds, n, n+1}  \g_{ij} \b^i\b^j - \sum_{i= 1,\cds, n} z_i \ol F_i(t, x_0, \hat a, \b^{i})  \\
\dis\q + x_{n+1} f_0(t, x_0, \hat a) +\l \sum_{i=1}^n  |\ol F_i(t, x_0, \hat a, \b^{ i}) - F_i(t, x_0, \hat a, \b^{i})|^{3\over 2},
\ea\right.
\eea
where $\b^0:= 1$, $\b^{n+1} := x_{n+1} b(t,x_0, \hat\a)$. We remark that $H_1$ depends on $\pa_\mu V_1(t,\mu,\cd)$ and $\pa_{\bx\mu} V_1(t,\mu,\cd)$, namely it depends on $V_1$ locally in $(t, \mu)$ but globally in $\bx$. 
Assume the above Hamiltonian has an equilibrium arguments $a_0^* = \phi_0^1(t, \mu, \bx, z,\g)$ and $(a,\b)^* = \phi_1^1(t, \mu, \bx, z,\g)$. Then the optimal control $\a^{0*}$ would take the form $\phi_0\big(\cd, \pa_\mu V_1(\cd) , \pa_{x\mu} V_1(\cd)\big)(t, \cL_{\bX^{\a^*}_t}, \bX^{\a^*}_t)$, which means that $\a^{0*}$ is an equilibrium in the closed-loop form, rather than in the open loop form.  
\begin{rem}
\label{rem-V1}
Under appropriate technical conditions, one can show that the value function for the zero sum game problem \reff{Vlmu} with closed loop controls satisfies  PDE \reff{V1-HJB}. Alternatively, $V_1$ is also the value function for the zero sum game with strategy versus open loop controls: $V_1 = \inf_\cS \sup_{\a^0} J_\l(\a^0, \cS(\a^0))$, where the strategy $\cS$ maps $\a^0$ to $(\a,\b)$ in a non-anticipating way, see \cite{CP}.
\end{rem}

For games, typically open loop and closed loop equilibria are different. Indeed, the $V$ defined by \reff{Vlmu} does not satisfy the PDE \reff{V1-HJB}.  Another choice is, since $\a^0$ is allowed to only depend on $X^0$, for the $h$ in \reff{V1-HJB},
\bea
\label{V2-HJB}
\left.\ba{c}
\dis \pa_t V_2 + H_2(t, \mu, \pa_\mu V_2(t, \mu,\cd), \pa_{\bx\mu} V_2(t, \mu,\cd)) =0, \q  \mbox{where}\\
\dis H_2(t, \mu, \f, \psi) := \int_{\dbR} \sup_{a_0\in A} \ul H_1(t, \mu, x_0, \f, \psi, a_0) \mu_0(dx_0),\\
\dis \ul H_1(\cds) := \int_{\dbR^{n+1}}\!\!\inf_{(a,\b)\in A^n\times \dbR^n} h(t, \mu, x_0, \bx^{-0}, \f(x_0, \bx^{-0}), \psi(x_0, \bx^{-0}), a_0, a,\b)\mu(d \bx^{-0} |x_0).
\ea\right.
\eea
Here $\bx^{-0} := (x_1, \cds, x_n, x_{n+1})$, $\mu_0$ is the marginal  distribution $\cL_{X_t}$ of $\mu$, and $\mu(d \bx^{-0} |x_0)$ is the conditional distribution $\cL_{\bX^{-0}_t|X^0_t=x_0}$. Then, the optimal arguments of $H_2$ and $\ul H_1$ take the form 
\bea
\label{phi}
a_0^* = \phi_0(t,\mu, x_0, \f, \psi),\q (a^*,\b^*) = \phi_1(t, \mu, \bx, z, \g, a_0).
\eea
 However, this still does not match \reff{Vlambda-2}. The main reason is that the corresponding $\a^{0*}_t = \phi_0\big(t, \cL_{\bX_t}, X_t, \pa_\mu V_2(t,\cL_{\bX_t},\cd), \pa_{\bx\mu} V_2(t,\cL_{\bX_t},\cd)\big)$ depends on $\a^1$ through $\cL_{\bX_t}$. We note that, here the crucial dependence is not about filtration, indeed, $\bX^{-0}$ itself is $\dbF$-progressively measurable, and thus even $\a^{0*}_t =\phi^1_0\big(\cd, \pa_\mu V_1(\cd) , \pa_{x\mu} V_1(\cd)\big)(t, \cL_{\bX^{\a^*}_t}, \bX^{\a^*}_t)$ corresponding to \reff{V1-HJB} is also $\dbF$-measurable. The key here is that $\a^{0*}$ is not allowed to depend on $(\a,\b)$. That's why DPP holds when we use closed loop controls, where $\a^{0*}$ can react to $(\a,\b)$ through $(\cL_\bX, \bX)$.  However, we note that $(\a^{*}, \b^*)$ is allowed to depend on $\a^0$, and thus  the form $(a^*,\b^*) = \phi_1(t, \mu, \bx, z, \g, a_0)$ does not cause any trouble. 
 
 \begin{rem}
 \label{rem-saddle}
 (i) By introducing a strategy $\cS': \a^0 \to (\a, \b)$, we can easily show that
 \bea
 \label{strategy}
 V_\l=\sup_{\a^0} \inf_{\a, \b} J_\l(\a^0, \a, \b) = \inf_{\cS'} \sup_{\a^0} J_\l(\a^0, \cS(\a^0)).
 \eea
However, we shall note that this strategy $\cS'$ is anticipating, in the sense that $(\cS'(\a^0))_t$ may depend on $\a^0_{[0, T]}$, rather than $\a^0_{[0, t]}$. Then, unlike in \cite{CP}, the right side of \reff{strategy} is still time inconsistent.

\no(ii) Clearly the right side of \reff{strategy} allows more strategy $\cS'$ than the non-anticipating strategy $\cS$ in Remark \ref{rem-V1}, then we see that $V_\l \le V_1$. Similarly, if we define $\ol V_\l$ and $\ol V_1$ corresponding to the problem $\inf_{\a, \b} \sup_{\a^0} J_\l(\a^0, \a, \b)$, then we have $ \ol V_1 \le \ol V_\l$. Therefore,
\beaa
V_\l \le V_1 \le \ol V_1 \le \ol V_\l.
\eeaa
We remark that, under the Isaacs condition, we shall have $V_1 = \ol V_1$. However, even under the Isaacs condition, typically $V_\l < \ol V_\l$, as we see in { Example \ref{eg-zerosum} below.} In particular, in this case the zero sum game with open loop controls does not have saddle points.
\end{rem}

\section{A continuous time principal-agent problem}
\label{sect-PA}
\setcounter{equation}{0} 
Before we move on to analyze the zero sum game problem \reff{Vlambda-2}, in this section we study a  continuous time principal-agent problem with multiple agents and one robust principal. 
Consider the setting $T$, $(\O, \cF, \dbP)$, $B$, $\dbF=\dbF^B$, $X$, $A$, $\cA$ as in the previous two sections. The agents' controls are still $\a=(\a^1, \cds, \a^n) \in \cA^n$.  For the principal's control, besides $\a^0$,\footnote{The $\a^0$ can be  interpreted as the principal's control on the state process, or as continuous time payments. In the latter case $\a^0 = (\a^{0,1}, \cds, \a^{0,n})$ should be a vector. However, for notational convenience in this section we just consider one dimensional $\a^0$.} as in the majority of the literature, we also consider the lump sum payments: $\xi = (\xi_1, \cds, \xi_n)\in (\dbL^2(\cF_T))^n$.  Recall \reff{Girsanov}. Given $\xi$ and $\hat\a$, the agents' and the principal's utilities are
\bea
\label{PA-Ji}
\left.\ba{lll}
\dis J_i(\xi, \hat\a) := \dbE^{\dbP^{\hat\a}}\Big[g_i(\xi_i) + \int_0^T f_i(s, X_s, \hat\a_s) ds\Big] = Y^{\xi, \hat\a, i}_0,~ i=1,\cds, n,\\
\dis  J_0(\xi, \hat\a) := \dbE^{\dbP^{\hat\a}}\Big[g_0(X_T - \sum_{i=1}^n \xi_i) + \int_0^T f_0(s, X_s, \hat\a_s) ds\Big] = Y^{\xi, \hat\a, 0}_0,
\ea\right.
\eea
where, for the $F_i$ in \reff{Yai},
\bea
\label{PA-Yai}
\left.\ba{lll}
\dis Y^{\xi,\hat\a, i}_t = g_i(\xi_i) + \int_t^T F_i(s, X_s, \hat\a_s, Z^{\xi,\hat\a, i}_s) ds - \int_t^T Z^{\xi, \hat\a, i}_sdB_s,~i=1,\cds, n, \\
\dis Y^{\xi,\hat\a, 0}_t = g_0(X_T - \sum_{i=1}^n \xi_i) + \int_t^T F_0(s, X_s, \hat\a_s, Z^{\xi,\hat\a, 0}_s) ds - \int_t^T Z^{\xi, \hat\a, 0}_sdB_s.
\ea\right.
\eea
Introduce further, for the $\ol F$ in \reff{barYai} and $i=1,\cds, n$,
\bea
\label{PA-barYai}
\left.\ba{c}
\dis \ol Y^{\xi,\hat\a, i}_t = g_i(\xi_i) + \int_t^T \ol F_i(s, X_s,  \hat\a_s, \ol Z^{\xi, \hat\a, i}_s) ds - \int_t^T \ol Z^{\xi,\hat\a, i}_sdB_s.
\ea\right.
\eea

We first characterize the true Nash equilibria of the agents for given $(\xi, \a^0)$, which is defined in an obvious way and denoted as $\cE(\xi,\a^0)$. 
\begin{lem}
\label{lem-PA-Nash}
Let Assumption \ref{assum-0} hold and fix $\xi \in (\dbL^2(\cF_T))^n$ and $\a^0\in \cA$. Then $\a^*$ is in $\cE(\xi,\a^0)$ if and only if 
\bea
\label{PA-NEcharacterization1}
\ol F_i(s, X_s, \a^0_s, \a^*_s, \ol Z^{\xi,  \a^0, \a^*, i}_s) =  F_i(s, X_s, \a^0_s, \a^*_s, \ol Z^{\xi,   \a^0, \a^*, i}_s), ~dt\times d\dbP-a.s., i=1,\cds, n,
\eea
and  if and only if there exist $y\in \dbR^n$ and $\b\in (\dbL^2(\dbF))^n$ such that
\bea
\label{PA-NEcharacterization2}
\left.\ba{c}
\dis \ol F_i(s, X_s, \a^0_s, \a^*_s, \b^{ i}_s) =  F_i(s, X_s, \a^0_s, \a^*_s, \b^{i}_s), ~dt\times d\dbP-a.s.,\ms\\
\dis y_i + \cX^{\a^0, \a^*, \b, i}_T = g_i(\xi_i),~ a.s.,
\ea\right.~ i=1,\cds, n.
\eea
\end{lem}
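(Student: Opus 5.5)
The plan is to reduce each agent's best-response problem to a BSDE comparison, exactly as for Lemma \ref{lem-Nash}, with $g_i(X_T)$ replaced by the $\cF_T$-measurable terminal value $g_i(\xi_i)$. Fix $\xi$, $\a^0$ and $\a^*$, and fix $i$. Under Assumption \ref{assum-0}, $g_i(\xi_i)$ is bounded, and $F_i$, $\ol F_i$ are Lipschitz in $z$ with bounded $z$-coefficient, so the BSDEs \reff{PA-Yai} and \reff{PA-barYai} are well posed. By Girsanov, $J_i(\xi,\a^0,\a^{*,-i},\a^i)=Y^{\xi,(\a^0,\a^{*,-i},\a^i),i}_0$.

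\textbf{Identifying the best-response value.} Since $\ol F_i\ge F_i$ pointwise and $\ol F_i$ does not depend on $a_i$, comparison gives $Y^{\xi,(\a^0,\a^{*,-i},\a^i),i}_0\le \ol Y^{\xi,\hat\a^*,i}_0$ for every $\a^i$. For the reverse inequality I take an $\e$-optimal measurable selector $\a^{i,\e}_s\in A$ with
\[F_i(s,X_s,\a^0_s,\a^{*,-i}_s,\a^{i,\e}_s,\ol Z^{\xi,\hat\a^*,i}_s)\ge \ol F_i(s,X_s,\a^0_s,\a^{*,-i}_s,\ol Z^{\xi,\hat\a^*,i}_s)-\e.\]
This selector exists and is progressively measurable by continuity of $F_i$ in $a_i$ together with a countable dense subset of $A$. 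Linearizing the difference of the two BSDEs and using boundedness of $b$ then gives $Y^{\cdot,i}_0\ge \ol Y_0-\e T$. Hence $v_i(\xi,\a^0,\a^{*,-i})=\ol Y^{\xi,\hat\a^*,i}_0$.

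\textbf{Equivalence with \reff{PA-NEcharacterization1}.} $\a^*\in\cE(\xi,\a^0)$ if and only if $Y^{\xi,\hat\a^*,i}_0=\ol Y^{\xi,\hat\a^*,i}_0$ for all $i$. Write $\Delta Y:=\ol Y-Y$ and $\Delta Z:=\ol Z-Z$. Then
\[\Delta Y_t=\int_t^T\Big[(\ol F_i-F_i)(s,\ol Z_s)+b\,\Delta Z_s\Big]ds-\int_t^T\Delta Z_s\,dB_s,\]
so $\Delta Y_0=\dbE^{\dbP^{\hat\a^*}}\big[\int_0^T(\ol F_i-F_i)(s,X_s,\hat\a^*_s,\ol Z^{\xi,\hat\a^*,i}_s)ds\big]$. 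The integrand is nonnegative and $\dbP^{\hat\a^*}\sim\dbP$. Therefore $\Delta Y_0=0$ if and only if \reff{PA-NEcharacterization1} holds $ds\times d\dbP$-a.s. This equivalence step is the only point needing care, and the care is just this equivalence of measures.

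\textbf{Equivalence with \reff{PA-NEcharacterization2}.} If \reff{PA-NEcharacterization1} holds, take $y_i:=\ol Y^{\xi,\hat\a^*,i}_0$ and $\b^i:=\ol Z^{\xi,\hat\a^*,i}$. Then $y_i+\cX^{\hat\a^*,\b,i}_t=\ol Y^{\xi,\hat\a^*,i}_t$, as in \reff{Y=cX}, and at $t=T$ this equals $g_i(\xi_i)$. Conversely, given $(y,\b)$ satisfying \reff{PA-NEcharacterization2}, the pair $(y_i+\cX^{\hat\a^*,\b,i},\b^i)$ solves BSDE \reff{PA-barYai}. By uniqueness, $\b^i=\ol Z^{\xi,\hat\a^*,i}$ $ds\times d\dbP$-a.s., so the first line of \reff{PA-NEcharacterization2} is exactly \reff{PA-NEcharacterization1}.
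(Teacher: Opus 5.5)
Your proposal is correct. It supplies exactly the standard details behind this lemma: the identification $v_i(\xi,\a^0,\a^{*,-i})=\ol Y^{\xi,\hat\a^*,i}_0$ via comparison plus a measurable $\e$-optimal selector, the linearization under $\dbP^{\hat\a^*}$ with nonnegative integrand $\ol F_i-F_i$ together with equivalence of measures, and BSDE uniqueness to force $\b^i=\ol Z^{\xi,\hat\a^*,i}$. The paper gives no separate proof (it treats the lemma, like Lemma \ref{lem-Nash}, as obvious, with $g_i(X_T)$ replaced by the bounded $\cF_T$-measurable terminal value $g_i(\xi_i)$), so your argument follows the intended route.
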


We next investigate approximate Nash equilibria. Given $R = (R_1, \cds, R_n)$, denote
\bea
\label{PA-cEe}
\cE_\e(\xi, \a^0, R):= \Big\{\a\in \cE_\e(\xi,\a^0): J_i(\xi, \a^0, \a) \ge R_i-\e,~ i=1,\cds, n\Big\}.
\eea
The principal's problem is:
\bea
\label{PA-V0}
\left.\ba{c}
\dis V_0(R) := \lim_{\e\to 0} V_\e(R),\\
\dis V_\e( R) := \sup_{(\xi,\a^0): \cE_\e(\xi, \a^0, R)\neq \emptyset}v_\e(R, \xi,\a^0),~ v_\e(R, \xi,\a^0):= \inf_{ \a^\e\in \cE_\e(\xi, \a^0,  R)} J_0(\xi, \a^0, \a^\e).
\ea\right.
\eea
Denote
\bea
\label{PA-Ierror}
\left.\ba{c}
\dis I_R(\xi, \hat \a, y,\b) := \sum_{i=1}^n \dbE\Big[ \int_0^T |\ol F_i(s, X_s, \hat\a_s, \b^{ i}_s) - F_i(s, X_s, \hat\a_s, \b^{i}_s)|^{3\over 2} ds \\
\dis + \big|y_i + \cX^{\hat\a, \b, i}_T - g_i(\xi_i)\big|^2 + \big((y_i-R_i)^-\big)^2\Big]. 
\ea\right.
\eea

Proposition \ref{prop-Nashe} remains true in this case. 
\begin{prop}
\label{prop-PA-Nashe}
Assume Assumption \ref{assum-0} holds and fix $\xi \in (\dbL^2(\cF_T))^n$, $\a^0\in \cA$,  and $\e>0$. Then there exists $\d>0$ such that the following hold.

\no(i) For any $  \a^\d\in\cE_\d(\xi, \a^0, R)$,  $I_R(\xi, \a^0, \a^\d,  J(\xi, \a^0, \a^\d), \ol Z^{\xi, \a^0, \a^\d}) \le \e$.

\no(ii) For any $  \a\in \cA^n$, if there exist $y$, $\b$ such that $I_R(\xi,  \a^0, \a, y, \b)\le \d$, then $  \a\in \cE_\e(\xi, \a^0, R)$. 
\end{prop}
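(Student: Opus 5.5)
The plan is to reduce Proposition \ref{prop-PA-Nashe} to Proposition \ref{prop-Nashe}, or more precisely to \cite[Theorem 4]{FRZ} on which it rests. The idea is to view the lump sum $\xi$ as a fixed terminal datum and then handle the extra IR term separately. Once $\xi$ and $\a^0$ are fixed, the agents' game has the same structure as in Section \ref{sect-stackelberg}, except that the terminal reward $g_i(X_T)$ is replaced by the bounded $\cF_T$-measurable variable $g_i(\xi_i)$. The characterization of $\e$-equilibria in \cite{FRZ} is stated for general bounded terminal conditions. I will also assume $g_i$ is bounded, as in Assumption \ref{assum-0}. Under these conditions, parts (i) and (ii) of Proposition \ref{prop-Nashe} hold verbatim for the error functional
\[
I^\xi(\xi,\hat\a,y,\b) := \sum_{i=1}^n \dbE\Big[\int_0^T |\ol F_i - F_i|^{3\over 2}(s, X_s,\hat\a_s,\b^i_s)\, ds + |y_i + \cX^{\hat\a,\b,i}_T - g_i(\xi_i)|^2\Big].
\]
The only change from Proposition \ref{prop-Nashe} is the terminal datum, and the BSDE \reff{PA-barYai} remains well posed with Lipschitz generator $\ol F_i$. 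Note that $I_R = I^\xi + \sum_i ((y_i-R_i)^-)^2$.

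For (i), take $\a^\d\in\cE_\d(\xi,\a^0,R)$ and set $y = J(\xi,\a^0,\a^\d)$ and $\b = \ol Z^{\xi,\a^0,\a^\d}$. Since $\a^\d\in\cE_\d(\xi,\a^0)$, the $\xi$-version of Proposition \ref{prop-Nashe}(i) gives $I^\xi\le \e/2$ for $\d$ small. The IR condition in \reff{PA-cEe} gives $y_i = J_i \ge R_i-\d$, so $\sum_i((y_i-R_i)^-)^2\le n\d^2$. Choosing $\d$ additionally with $n\d^2\le \e/2$ yields $I_R\le\e$.

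For (ii), suppose $I_R(\xi,\a^0,\a,y,\b)\le\d$. Then $I^\xi\le \d$, so the $\xi$-version of Proposition \ref{prop-Nashe}(ii) gives $\a\in\cE_{\e}(\xi,\a^0)$, provided $\d$ is small. It remains to check the IR constraint $J_i(\xi,\a^0,\a)\ge R_i-\e$. Here $(y_i - R_i)^-\le\sqrt\d$, so it suffices to show $|J_i(\xi,\a^0,\a) - y_i|\le \e/2$. I will argue through $\ol Y^{\xi,\hat\a,i}_0$. The process $y_i+\cX^{\hat\a,\b,i}$ solves the BSDE \reff{PA-barYai} with terminal value perturbed by $\Delta := y_i+\cX^{\hat\a,\b,i}_T - g_i(\xi_i)$. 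Standard BSDE stability then gives $|y_i - \ol Y^{\xi,\hat\a,i}_0|\le C\|\Delta\|_{L^2}\le C\sqrt\d$. Next, $\ol Y^i_0 - J_i = \ol Y^i_0 - Y^i_0$ is controlled by the generator gap $\ol F_i - F_i\ge 0$ evaluated along the solutions. I expect this to be the main obstacle, because the gap is small only in $L^{3/2}$ at $\b^i$, not at $\ol Z^i$ or $Z^i$. To handle it, I will linearize the difference $Y^i - (y_i+\cX^i)$. Its generator difference splits into $F_i(\cdot,Z^i) - F_i(\cdot,\b^i)$, which is linear in $Z^i-\b^i$ with bounded coefficient $b$, plus $F_i(\cdot,\b^i)-\ol F_i(\cdot,\b^i)$. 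After a Girsanov change of measure with bounded density, the remainder is bounded by $C\,\dbE\int_0^T|\ol F_i-F_i|(\b^i)\,ds\le C\d^{2/3}$ via Hölder, together with the terminal error $C\sqrt\d$. Alternatively, $|J_i - \ol Y_0^i|$ is exactly what \cite{FRZ} already controls in proving the $\e$-equilibrium property, and could be cited directly. Shrinking $\d$ then closes the argument.
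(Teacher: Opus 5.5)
Your proposal is correct and follows the same route as the paper. The paper gives no separate proof: it simply asserts that Proposition \ref{prop-Nashe} (i.e.\ \cite[Theorem 4]{FRZ}) carries over with terminal datum $g_i(\xi_i)$, and your separate handling of $((y_i-R_i)^-)^2$ in (i) and of the IR constraint in (ii) via BSDE linearization supplies the details it omits.

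One small correction: the Girsanov density $M^{\hat\a}_T$ is not bounded, only in every $L^p$. So bound $\dbE^{\dbP^{\hat\a}}\big[\int_0^T(\ol F_i-F_i)(\b^i)\,ds\big]$ directly by H\"older, pairing $M^{\hat\a}_T\in L^3$ with $\int_0^T(\ol F_i-F_i)(\b^i)\,ds\in L^{3/2}$, rather than by $C\,\dbE\int_0^T(\ol F_i-F_i)(\b^i)\,ds$; this still yields the bound $C\d^{2/3}$ you need.
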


Next, denote
\bea
\label{PA-Vn}
\left.\ba{c}
\dis V_\l(R):=  \sup_{\xi\in (\dbL^2(\cF_T))^n,\a^0\in \cA} v_\l(\xi, \a^0, R),\\
 \dis \mbox{where}\q v_\l(\xi,\a^0, R) := \inf_{y\in \dbR^n, \a\in \cA^n,  \b\in (\dbL^2(\dbF))^n}  \Big[J_0(\xi, \a^0, \a) + \l  I_R(\xi, \a^0, \a, y, \b)\Big].
 \ea\right.
\eea

\begin{thm}
\label{thm-PA-Vn}
Assume Assumption \ref{assum-0} holds and $\cE_\e(\xi,\a^0, R) \neq \emptyset$, for any $\xi \in (\dbL^2(\cF_T))^n$, $\a^0\in \cA$,  and $\e>0$.  Then $V_\l(R) \uparrow V_0(R)$ as $\l\uparrow \infty$.
\end{thm}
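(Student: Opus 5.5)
The plan is to follow the two-step argument of Theorem \ref{thm-Vn} essentially verbatim. Proposition \ref{prop-PA-Nashe} replaces Proposition \ref{prop-Nashe}, and $\cE_\e(\xi,\a^0,R)$ replaces $\cE_\e(\a^0)$. The only new features are the extra control $\xi$ in the outer supremum and the IR penalty $((y_i-R_i)^-)^2$ in $I_R$; both enter only through Proposition \ref{prop-PA-Nashe}. We first record the preliminary facts. Under Assumption \ref{assum-0}, $J_0$ is bounded, since $g_0,f_0$ are bounded and $\dbP^{\hat\a}$ is a probability measure, and $I_R\ge 0$. Hence $v_\l(\xi,\a^0,R)$ is increasing in $\l$ and bounded below. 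It is also bounded above, by plugging in an approximate equilibrium as below. So $V_\l(R)$ is finite and increasing. Likewise $V_\e(R)$ is increasing as $\e\downarrow 0$, because $\cE_\e(\xi,\a^0,R)$ shrinks. The standing nonemptiness assumption means the constraint in the supremum of \reff{PA-V0} is vacuous.

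\textbf{Step 1} ($\lim_\l V_\l(R)\le V_0(R)$). Fix $\l$ and $\e>0$, and pick $(\xi,\a^0)$ with $v_\l(\xi,\a^0,R)\ge V_\l(R)-\e$. Apply Proposition \ref{prop-PA-Nashe}(i) with this $\e$ to get $\d$, and choose $\a^\d\in\cE_\d(\xi,\a^0,R)$ with $J_0(\xi,\a^0,\a^\d)\le v_\d(R,\xi,\a^0)+\e$. Setting $y:=J(\xi,\a^0,\a^\d)$ and $\b:=\ol Z^{\xi,\a^0,\a^\d}$ gives $I_R\le\e$. The same chain of inequalities as in Theorem \ref{thm-Vn} then yields
\[
V_\l(R)\le V_\d(R)+(\l+2)\e\le V_0(R)+(\l+2)\e .
\]
Sending $\e\to0$ and then $\l\to\infty$ gives the claim.

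\textbf{Step 2} ($V_0(R)\le\lim_\l V_\l(R)$). Fix $\e$, take $\d$ such that both parts of Proposition \ref{prop-PA-Nashe} hold, and pick $(\xi,\a^0)$ with $v_\e(R,\xi,\a^0)\ge V_\e(R)-\d$. Here one subtlety must be checked: $\d$ in Proposition \ref{prop-PA-Nashe} depends on $(\xi,\a^0)$, but $V_\e(R)$ is a supremum over $(\xi,\a^0)$. I would handle this exactly as the paper does, choosing $(\xi,\a^0)$ first with tolerance $\d'$ and then invoking the proposition for that fixed pair. Next take near-minimizers $(\a^\l,y^\l,\b^\l)$ of $v_\l$ within $1/\l$. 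To bound $v_\l$ uniformly, apply Proposition \ref{prop-PA-Nashe}(i) with tolerance $1/\l$ and use $\cE_{\tilde\d}(\xi,\a^0,R)\ne\emptyset$. This gives $\tilde\a^\l$ with $I_R\le 1/\l$, hence $v_\l\le C_0$ with $C_0$ depending only on the bounds of $g_0,f_0$. Consequently $\l I_R(\xi,\a^0,\a^\l,y^\l,\b^\l)\le C_1$, so $I_R\le\d$ for large $\l$. Proposition \ref{prop-PA-Nashe}(ii) then gives $\a^\l\in\cE_\e(\xi,\a^0,R)$, and
\[
V_\e(R)\le V_\l(R)+1/\l+\d .
\]
Letting $\l\to\infty$, then $\d\to 0$, then $\e\to 0$ finishes the proof.

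The main point to verify is that the constants $C_0,C_1$ stay uniform despite the new ingredients, namely $\xi\in\dbL^2$ being unbounded and the IR term. This holds because the bound only involves $J_0$, which is bounded by $\|g_0\|_\infty+T\|f_0\|_\infty$ regardless of $\xi$, together with nonnegativity of $I_R$. Everything else is inherited from Proposition \ref{prop-PA-Nashe}, which we take as given.
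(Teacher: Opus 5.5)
Your proposal is correct and follows the paper's intended route. The paper omits this proof, saying it is almost the same as that of Theorem \ref{thm-Vn}, and you carry out those two steps with Proposition \ref{prop-PA-Nashe} in place of Proposition \ref{prop-Nashe}. You also correctly note that the uniform bounds use only $|J_0|\le \|g_0\|_\infty+T\|f_0\|_\infty$, which does not depend on $\xi$, together with $I_R\ge 0$.

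Your choice of $(\xi,\a^0)$ with a separate tolerance $\d'$ before invoking the proposition is in fact slightly cleaner than Step 2 of Theorem \ref{thm-Vn}, which fixes $\d$ before choosing $\a^0$; the paper does not do it your way, as you claimed. Write $\d'$ rather than $\d$ in your final inequality $V_\e(R)\le V_\l(R)+1/\l+\d'$.
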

The proof is almost the same as that for Theorem \ref{thm-Vn} and thus is omitted.

Moreover, note that, 
\bea
\label{PA-miny}
\left.\ba{lll}
\dis \inf_{y_i\in \dbR} \dbE\Big[ \big|y_i + \cX^{\hat\a, \b, i}_T - g_i(\xi_i)\big|^2 + \big((y_i-R_i)^-\big)^2\Big] \\
\dis=\phi\big(R_i, \dbE[g_i(\xi_i)-\cX^{\hat\a, \b, i}_T]\big) + \dbE\big[ \big|\cX^{\hat\a, \b, i}_T - g_i(\xi_i)\big|^2\big],\\
\dis \mbox{where}\q \phi(R_i, x) := \inf_{y_i\in \dbR} \Big[y_i^2 - 2x y_i + \big((y_i-R_i)^-\big)^2\Big].
\ea\right.
\eea
Then, similarly to \reff{Vlambda-2} we have
\bea
\label{PA-Vn-2}
\left.\ba{lll}
\dis V_\l(R) =\sup_{(\xi,\a^0)\in (\dbL^2(\cF_T))^n\times \cA}  \inf_{(\a,\b)\in \cA^n\times (\dbL^2(\dbF))^n}  \Big\{  \l \sum_{i=1}^n \phi\big(R_i, \dbE[g_i(\xi_i)-\cX^{\hat\a, \b, i}_T]\big)  \\
\dis \q + \dbE\Big[M^{\hat\a}_T g_0(X_T-\sum_{i=1}^n \xi_i)+\l \sum_{i=1}^n \big|\cX^{\hat\a, \b, i}_T - g_i(\xi_i)\big|^2  \\
\dis \q + \int_0^T \big[M^{\hat\a}_s f_0(s, X_s,\hat \a_s) ds+ \l \sum_{i=1}^n  |\ol F_i(s, X_s, \hat\a_s, \b^{ i}_s) - F_i(s, X_s, \hat\a_s, \b^{i}_s)|^{3\over 2}\big]ds \Big]\Big\}.
\ea\right.
\eea

\section{Zero sum games with open loop controls:  a special case}
\label{sect-drift}
\setcounter{equation}{0} 

Motivated by \reff{Vlambda-2}, in this section we consider the following zero sum games with open loop controls, which has independent interest and the general model seems new in the literature.
Fix $(\O, \cF, \dbP)$, $B$, $\dbF=\dbF^B$ as before. Let $\cA$ denote the set of $\dbF$-progressively measurable processes. By abusing the notations, consider the following problem:
\bea
\label{drift-V0}
\left.\ba{c}
\dis X^{\hat\a}_t := x + \int_0^t b_s(X^{\hat\a}_s, \hat\a_s) ds + \int_0^t \si_s(X^{\hat\a}_s, \a^0_s) dB_s;\\
 \dis V_0 := \sup_{\a^0\in \cA} V_0(\a^0),\q V_0(\a^0):= \inf_{\a\in \cA} J(\a^0, \a),\q J(\hat\a):= \dbE\Big[g(X^{\hat\a}_T) + \int_0^t f_s(X^{\hat\a}_s, \hat\a_s)ds\Big].
 \ea\right.
 \eea
 Here $\hat\a=(\a^0, \a)$, $b, \si, f$ are $\dbF$-progressively measurable, and $g$ is $\cF_T$-measurable. In particular, $\si$ does not depend on $\a$, that is, the follower controls only the drift.  For notational simplicity, in this section we assume $\a$ is also $1$-dimensional.  Introduce the Hamiltonian: 
 \bea
 \label{drift-H}
 H_s(x, z, a_0):= \inf_{a\in A} h_s(x, z, a_0, a),\q  \mbox{where}\q  h_s(x, z,  \hat a):= z b_s(x, \hat a) + f_s(x, \hat a).
 \eea 
 In this section, we enforce the following assumption.

  \begin{assum}
 \label{assum-drift}
 (i)  $b, \si$ are differentiable in $x$;  $f, g$ are twice differentiable in $x$; and all the functions and the above derivatives are bounded, continuous in $t$, and uniformly continuous in $(x, \hat a)$;
 
 \no (ii) $H$ is twice differentiable in $(x, z)$; $H$ and the above derivatives are continuous in $(t,x,z, a_0)$; and the derivatives (but not $H$ itself) are bounded.
  \end{assum}
  We note that, since we will assume further  some strong conditions, we do not optimize the conditions here. Moreover, although we do not assume regularity in $B$ here, we remark that such a requirement is not stringent in our setting because we are using weak formulation in our original problem, and thus $B=X$ is the state process instead of noise, see \reff{X}.

 Similarly as before, we may consider $\e$-optimal controls. In this case, however, this procedure won't change the leader's value. Indeed, for any $\e> 0$ and $\a^0$, let $\cE_\e(\a^0)$ denote the $\e$-optimal control $\a^\e$ of \reff{drift-V0}. Denote
 \bea
 \label{drift-Ve}
 \left.\ba{c}
 \dis \ul v_\e(\a^0):= \inf_{\a\in \cE_\e(\a^0)} J(\a^0, \a),\q \ul V_\e :=  \sup_{\a^0\in \cA} \ul v_\e(\a^0),\\
 \dis \ol v_\e(\a^0):= \sup_{\a\in \cE_\e(\a^0)} J(\a^0, \a),\q \ol V_\e :=  \sup_{\a^0\in \cA} \ol v_\e(\a^0)
 \ea\right.
 \eea
 It is clear that 
 \bea
 \label{drift-Ve2}
 \left.\ba{c}
\dis \ul v_\e(\a^0):= \inf_{\a} J(\a^0, \a),\q \ul V_\e = V_0,\q 0\le \ol v_\e(\a^0) - \ul v_\e(\a^0)\le \e,\q 0\le \ol V_\e - \ul V_\e \le \e,\\
\dis \mbox{and thus}\q \lim_{\e\to 0} V_\e = V_0.
\ea\right.
 \eea
 
The problem \reff{drift-V0} is a special case of our original Stackelberg game \reff{leader} with one follower, except that we are using open loop controls, instead of closed loops controls as in Section \ref{sect-stackelberg}.  We remark that in this case, even if $\si$ is nondegenerate,  it is not convenient to use weak formulation anymore because $\hat\a$ depends on $B$, not on $X^{\hat\a}$.  
 
 We first characterize  $V_0(\a^0)$ for given $\a^0$, which is a standard control problem. Consider the following BSPDE (or stochastic HJB equation):
  \bea
 \label{drift-BSPDE}
 \left.\ba{lll}
 \dis u^{\a^0}_t(x) = g(x) - \int_t^T v_s^{\a^0}(x)dB_s \\
 \dis\q + \int_t^T \big[{1\over 2} \pa_{xx} u^{\a^0}_s(x) \si^2_s(x, \a^0_s)+ \pa_{x} v^{\a^0}_s(x) \si_s(x, \a^0_s)+ H_s(x, \pa_x u_s^{\a^0}(x), \a^0_s)\big] ds.
 \ea\right.
 \eea
 
 \begin{defn}
(i) For $k\ge 1$, let $C^k(\dbF)$ denote the set of $\dbF$-progressively measurable random fields $u: [0, T]\times \dbR\times \O\to \dbR$ such that $u$ is $k$-th differentiable in $x$ and all the derivatives (including $u$ itself) are bounded and continuous in $(t,x)$. 

\no(ii) Let $\cA_0$ denote the set of $\a^0\in \cA$ such that  BSPDE \reff{drift-BSPDE} has a classical solution  $(u^{\a^0}, v^{\a^0})\in C^3(\dbF)\times C^2(\dbF)$.
\end{defn}

The following result is standard, see e.g. \cite{Peng}.
 \begin{prop}
 \label{prop-drift-supa}
 Let Assumption \ref{assum-drift} hold. For any $\a^0\in \cA_0$, we have $V_0(\a^0) = u_0^{\a^0}(x)$.
 \end{prop}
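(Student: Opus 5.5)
Fix $\a^0\in\cA_0$ and write $(u,v)=(u^{\a^0},v^{\a^0})$. The plan is a verification argument based on the Itô--Wentzell formula. For any follower control $\a\in\cA$, apply Itô--Wentzell to $u_t(X^{\hat\a}_t)$, which is allowed because $u\in C^3(\dbF)$ and $v\in C^2(\dbF)$. Writing $\si_s:=\si_s(X^{\hat\a}_s,\a^0_s)$ and $b_s:=b_s(X^{\hat\a}_s,\hat\a_s)$, and evaluating $u,v$ and their derivatives at $(s,X^{\hat\a}_s)$, we get
\beaa
d u_s(X^{\hat\a}_s) = \Big[-{1\over 2}\pa_{xx}u_s\si_s^2 - \pa_x v_s\si_s - H_s(\cd,\pa_x u_s,\a^0_s) + \pa_x u_s b_s + {1\over 2}\pa_{xx}u_s\si_s^2 + \pa_x v_s \si_s\Big]ds + \big[v_s+\pa_x u_s\si_s\big]dB_s.
\eeaa
The second-order and cross-variation terms cancel with those in the generator of \reff{drift-BSPDE}. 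What remains in the drift is $h_s(X^{\hat\a}_s,\pa_x u_s,\hat\a_s) - f_s - H_s(X^{\hat\a}_s,\pa_x u_s,\a^0_s)$.

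Integrate from $0$ to $T$ and take expectations. The stochastic integral is a true martingale because $u$, $v$, $\pa_x u$ and $\si$ are bounded. Using $u_T=g$, this gives
\beaa
J(\a^0,\a) = u_0(x) + \dbE\Big[\int_0^T \big(h_s(X^{\hat\a}_s,\pa_x u_s(X^{\hat\a}_s),\a^0_s,\a_s) - H_s(X^{\hat\a}_s,\pa_x u_s(X^{\hat\a}_s),\a^0_s)\big)ds\Big].
\eeaa
Since $H=\inf_a h$, the integrand is nonnegative. Hence $J(\a^0,\a)\ge u_0(x)$ for every $\a$, so $V_0(\a^0)\ge u_0(x)$.

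For the reverse inequality, one cannot assume the infimum in $H$ is attained, so I would use $\e$-optimal selections. Consider the random field $\pi_s(x):=\pa_x u_s(x)$. For each $\e>0$, choose a measurable $\dbF$-progressive map $a^\e_s(x)$ with
\beaa
h_s(x,\pi_s(x),\a^0_s,a^\e_s(x))\le H_s(x,\pi_s(x),\a^0_s)+\e.
\eeaa
Uniform continuity of $h$ in $(x,a)$, together with continuity of $\pi$, allows a piecewise-constant or countable-partition construction, so a measurable selection theorem is not needed.

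The feedback $a^\e_s(X_s)$ need not produce a well-posed SDE. To avoid this, I would define the open-loop control on a time grid: freeze the state at grid points, or use a measurable partition in $x$ on which $a^\e$ is constant. Then $\a^\e_s := a^\e_{s}(X^{\hat\a^\e}_{t_k})$ for $s\in[t_k,t_{k+1})$ is defined recursively and is progressively measurable. Uniform continuity of $h$ and the bounded derivatives make the error from freezing the state vanish as the mesh goes to $0$. Plugging $\a^\e$ into the identity above gives $J(\a^0,\a^\e)\le u_0(x)+C\e$. Letting $\e\to0$ yields $V_0(\a^0)\le u_0(x)$.

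I expect the main obstacle to be this second step: constructing an admissible nearly optimal open-loop control with only continuity assumptions and no attainment of the infimum, and controlling the error from freezing the state. The first step is a routine application of Itô--Wentzell, as in \cite{Peng}.
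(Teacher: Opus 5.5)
Your proposal is correct. The paper gives no proof of its own and only calls the result standard with a citation to \cite{Peng}; your argument is the standard verification argument behind that citation. It\^{o}--Wentzell applied to $u^{\a^0}_t(X^{\hat\a}_t)$ gives $J(\a^0,\a)=u^{\a^0}_0(x)+\dbE\int_0^T (h_s-H_s)\,ds\ge u^{\a^0}_0(x)$, and $\e$-optimal controls give the reverse inequality. One point to keep straight: admissibility of the near-optimal control comes from freezing the state on a time grid, not from an $x$-partition alone, since an $x$-partition by itself still leaves a discontinuous feedback drift. The freezing error is at most $2L\,\dbE|X_s-X_{t_k}|\le C\sqrt{\Delta}$, because $x\mapsto h_s(x,\pa_x u_s(x),\hat a)$, and therefore $H_s(x,\pa_x u_s(x),a_0)$, is Lipschitz with a constant $L$ uniform in $\hat a$.
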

 
 \begin{rem}
 \label{rem-drift-BSPDE}
 (i) Assume $\f_t(x) = \f(t, x, B_t)$ for $\f=b, \si, f, \a^0$, and  $g(x) = g(x, B_T)$, where, by abusing the notations, the functions in the right side are deterministic. Then we have
 \beaa
 u^{\a^0}_t(x) = w(t, x, B_t), \q v^{\a^0}_t(x) = \pa_y w(t, x, B_t),
 \eeaa
 and $w(t,x,y)$ satisfies the following degenerate PDE: denoting $\hat\si(t,x,y) := \si\big(t,x,y, \a^0(t,x, y)\big)$,
 \beaa
&& \pa_t w + {1\over 2} \pa_{yy} w +{1\over 2} \pa_{xx} w \hat\si^2+ \pa_{xy} w \hat \si+ H\big(t, x, y, \pa_x w, \a^0(t,x,y)\big) =0,\\
 && w(T,x,y) = g(x, y).
 \eeaa
 
\no (ii) In the general case: $\f_t(x) = \f_t(x, B_{\cd\wedge t})$ for $\f=b, \si, f, \a^0$, and  $g(x) = g(x, B)$, we have 
 \beaa
 u^{\a^0}_t(x) = W(t, x, B_{\cd\wedge t}), \q v^{\a^0}_t(x) = \pa_\by W(t, x, B_{\cd\wedge t}),
 \eeaa
 where $\pa_\by W(t,x,\by)$, $\by\in C([0, T])$, denotes the Dupire's derivative of $W$ with respect to the path $\by$, and $W$ satisfies the following path dependent PDE: denoting $\hat\si(t,x,\by) := \si\big(t,x,\by, \a^0(t,x, \by)\big)$,
 \beaa
&& \pa_t W + {1\over 2} \pa_{\by\by} W +{1\over 2} \pa_{xx} W \hat\si^2+ \pa_{x\by} W \hat \si+ H\big(t, x, \by, \pa_x W, \a^0(t,x,\by)\big) =0,\\
 && W(T,x,\by) = g(x, \by).
 \eeaa
 We refer to \cite{Dupire, CF} for the Duprie's derivatives and \cite[Section 11.3.5]{Zhang-book} for the above connection. We note that, here $B$ denotes the state process $X$, see \reff{X}, rather than the noise. So it is reasonable to assume certain regularity in $B$.
 \end{rem}
 
 We next provide another characterization for $V_0(\a^0)$, by using coupled FBSDEs.
 First, note that, for the optimal control $a^*$ of the Hamiltonian in \reff{drift-H} we have
\bea
\label{drift-a*}
\left.\ba{c}
\dis b_s(x, \a^0_s, a^*)= \pa_z H_s(x, z, a_0),\q f_s(x, a_0, a^*) =   \big[H_s - z \pa_z H_s\big](x, z,   a_0).
 \ea\right.
\eea
Introduce an SDE:
\bea
\label{drfit-SDE1}
X^{\a^0}_t := x + \int_0^t \pa_zH_s(X^{\a^0}_s, \pa_x u_s^{\a^0},  \a^0_s) ds + \int_0^t  \si_s(X^{\a^0}_s,   \a^0_s) dB_s.
 \eea
 Denote 
 \beaa
 Y^{0,\a^0}_t := u_t^{\a^0}(X^{\a^0}_t),\q Y^{1,\a^0}_t := \pa_x u_t^{\a^0}(X^{\a^0}_t).
 \eeaa 
 By differentiating \reff{drift-BSPDE} we have, denoting $(u, v) = (u^{\a^0}, v^{\a^0})$,
  \bea
 \label{drift-BSPDE2}
 \left.\ba{lll}
 \dis \pa_x u_t(x) = \pa_x g  - \int_t^T \pa_x vdB_s + \int_t^T \big[{1\over 2} \pa_{xxx} u \si^2+ \pa_{xx} v\si + \pa_z H \pa_{xx} u\big] ds\\
 \dis \qq\qq + \int_t^T \big[\pa_{xx} u \si\pa_x \si +\pa_x v \pa_x \si  + \pa_x H \big] ds.
 \ea\right.
 \eea
 Applying the It\^{o}-Ventzell formula we have
\bea
\label{drift-BSDEs}
\left.\ba{lll}
\dis  d Y^{0,\a^0}_t = Z^{0,\a^0}_t dB_t - \big[H_t - \pa_x u_t^{\a^0} \pa_z H_t\big](X^{\a^0}_t, \pa_x u_t^{\a^0}, \a^0_t)dt; \\
\dis  d Y^{1,\a^0}_t = Z^{1,\a^0}_t dB_t - \big[\pa_{xx} u \si\pa_x \si +\pa_x v \pa_x \si  + \pa_x H \big]dt.
 \ea\right.
\eea
where, 
\bea
\label{drift-Z}
\dis Z^{0,\a^0}_t = v^{\a^0}_t(X^{\a^0}_t) + \pa_x u_t\si_t(X^{\a^0}_t, \a^0_t), \q Z^{1,\a^0}_t = \pa_x v^{\a^0}_t(X^{\a^0}_t) + \pa_{xx} u_t\si_t(X^{\a^0}_t, \a^0_t).
\eea
Then we obtain the following FBSDE
\bea
\label{drift-FBSDE}
\left.\ba{lll}
\dis X^{\a^0}_t := x + \int_0^t \pa_zH_s(X^{\a^0}_s, Y^{1,\a^0}_s, \a^0_s) ds + \int_0^t \si_s(X^{\a^0}_s, \a^0_s) dB_s;\\
\dis Y^{0,\a^0}_t = g(X^{\a^0}_T) +\int_t^T \big[H_s - Y^{1,\a^0}_s \pa_z H_s\big](X^{\a^0}_s, Y^{1,\a^0}_s, \a^0_s)ds - \int_t^T Z^{0,\a^0}_s dB_s; \\
\dis Y^{1,\a^0}_t = \pa_x g(X^{\a^0}_T) +\int_t^T \big[Z^{1,\a^0}_s\pa_x \si_s(X^{\a^0}_s, \a^0_s)   + \pa_x H_s(X^{\a^0}_s, Y^{1,\a^0}_s, \a^0_s) \big]ds -\int_t^T Z^{1,\a^0}_s dB_s.
 \ea\right.
 \eea
 We remark that at above the first and third equations are coupled, while the second one is decoupled from the other two.

\begin{assum}
 \label{assum-drift2}
 The set $\cA_0$ is dense in $\cA$. That is, for any $\a^0\in \cA$, there exists $\tilde \a^{0,m}\in \cA_0$ such that $\tilde \a^{0,m}\to \a^0$ in measure $dt\times d\dbP$, as $m\to\infty$.
 \end{assum}

\begin{prop}
 \label{prop-drift-supa2}
 Let Assumption \ref{assum-drift} hold. 
 
\no (i) For any $\a^0\in \cA_0$,  the above FBSDE is well-posed, and $V_0(\a^0) = Y^{0,\a^0}_0$. 

\no (ii) Moreover, if Assumption \ref{assum-drift2} holds, then  $V_0 = \sup_{\a^0\in \cA_0} Y^{0,\a^0}_0$
 \end{prop}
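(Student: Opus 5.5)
For part (i), fix $\a^0\in\cA_0$ and let $(u,v)=(u^{\a^0},v^{\a^0})\in C^3(\dbF)\times C^2(\dbF)$ be the classical solution of \reff{drift-BSPDE}. I will prove existence by construction and uniqueness by a decoupling argument.

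\textbf{Existence.} Since $\pa_{xx}u$ is bounded and $\pa_{xz}H,\pa_{zz}H$ are bounded (Assumption \ref{assum-drift} (ii)), the drift $x\mapsto \pa_zH_s(x,\pa_xu_s(x),\a^0_s)$ is uniformly Lipschitz. Hence SDE \reff{drfit-SDE1} has a unique strong solution $X^{\a^0}$. Define $Y^{0}:=u(X)$ and $Y^{1}:=\pa_xu(X)$, and define $Z^0,Z^1$ by \reff{drift-Z}. By the It\^o--Ventzell formula applied to $u$ and to $\pa_xu$, using \reff{drift-BSPDE} and its $x$-derivative \reff{drift-BSPDE2}, these processes satisfy \reff{drift-BSDEs}. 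The cross terms $\pa_xv\,\si$, $\pa_{xx}u\,\si^2/2$, $\pa_zH\,\pa_{xx}u$ and so on cancel, leaving exactly \reff{drift-FBSDE}. The only nontrivial bookkeeping is in the $Y^1$ equation: one must check that $\pa_{xx}u\,\si\pa_x\si+\pa_xv\,\pa_x\si=Z^1\pa_x\si$, which follows from \reff{drift-Z}. The regularity of $u$ and $v$ is exactly what It\^o--Ventzell needs.

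\textbf{Uniqueness.} This is the step I expect to be the main obstacle, since the first and third equations of \reff{drift-FBSDE} are genuinely coupled. I will use the decoupling field $\pa_xu$. Take any solution $(X',Y^{0\prime},Y^{1\prime},Z')$ and set $\Delta_t:=Y^{1\prime}_t-\pa_xu_t(X'_t)$. Applying It\^o--Ventzell to $\pa_xu_t(X'_t)$ with the dynamics of $X'$ and subtracting, $\Delta$ satisfies a linear BSDE with $\Delta_T=0$. Its generator is linear in $(\Delta,\tilde Z)$, where $\tilde Z:=Z^{1\prime}-\pa_xv(X')-\pa_{xx}u\,\si(X')$. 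The coefficients are bounded: the $\Delta$ term comes from differences of $\pa_xH$ and of $\pa_zH\,\pa_{xx}u$ in the $z$-variable, which are controlled by the bounded second derivatives of $H$ and by $\pa_{xx}u$. Hence $\Delta\equiv0$. Then $X'$ solves \reff{drfit-SDE1}, so $X'=X$ by uniqueness of that SDE, and $Y^{0\prime}$ is determined because its equation is decoupled. Therefore $Y^{0,\a^0}_0=u_0(x)$, and Proposition \ref{prop-drift-supa} gives $V_0(\a^0)=u_0^{\a^0}(x)=Y^{0,\a^0}_0$.

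\textbf{Part (ii).} By (i), $\sup_{\a^0\in\cA_0}Y^{0,\a^0}_0=\sup_{\a^0\in\cA_0}V_0(\a^0)\le V_0$. For the reverse inequality, fix $\a^0\in\cA$ and $\e>0$, and choose $\a$ with $J(\a^0,\a)\le V_0(\a^0)+\e$. This is the wrong direction for a lower bound, so instead I will show that $\a^0\mapsto V_0(\a^0)$ is lower semicontinuous along the approximations $\tilde\a^{0,m}$ of Assumption \ref{assum-drift2}. In fact I will prove continuity uniformly in $\a$: $\sup_{\a}|J(\tilde\a^{0,m},\a)-J(\a^0,\a)|\to0$. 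The SDE in \reff{drift-V0} has coefficients that are Lipschitz in $x$ and uniformly continuous in $\hat a$, uniformly in $a$. Standard $\dbL^2$ stability estimates, combined with dominated convergence using convergence in measure and boundedness, then give $\dbE\sup_t|X^{\tilde\a^{0,m},\a}_t-X^{\a^0,\a}_t|^2\to0$. The moduli of continuity are uniform in $\a$, so the convergence is uniform in $\a$; the one point needing care is that the uniform continuity in $a_0$ is uniform over $a$, which holds by Assumption \ref{assum-drift} (i). The same bounds pass to $J$ because $f$ and $g$ are Lipschitz in $x$ and uniformly continuous. Hence $V_0(\tilde\a^{0,m})\to V_0(\a^0)$, so $\sup_{\cA_0}V_0\ge V_0(\a^0)$ for every $\a^0\in\cA$, which gives $V_0=\sup_{\a^0\in\cA_0}Y^{0,\a^0}_0$.
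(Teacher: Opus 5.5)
Your proposal is correct and follows the paper's proof. Part (i) is exactly the four step scheme, with $\pa_x u^{\a^0}$ (solving the differentiated BSPDE \reff{drift-BSPDE2}) as the decoupling field; the paper only cites \cite{MPY} for this, and you carry it out explicitly. Part (ii) is the same density argument via $\lim_{m\to\infty}\sup_{\a\in\cA}|J(\tilde\a^{0,m},\a)-J(\a^0,\a)|=0$. You can delete the abandoned sentence about choosing an $\e$-optimal $\a$, since it plays no role.
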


\proof (i) follows from the same arguments in the four step scheme, see \cite{MPY}, by replacing the PDE there with BSPDE \reff{drift-BSPDE}. We omit the details.  

To see (ii), denote $\tilde V_0 := \sup_{\a^0\in \cA_0} Y^{0,\a^0}_0$. First it is clear that $\tilde V_0 \le V_0$. On the other hand, fix any $\a^0\in \cA$, let $\tilde\a^{0,m}\in \cA_0$ be an approximating sequence as in Assumption \ref{assum-drift2}. Recalling \reff{drift-V0} and by the uniformly continuity in Assumption \ref{assum-drift} (i), it is clear that
\beaa
\lim_{m\to\infty}\sup_{\a\in \cA} |J(\tilde \a^{0,m}, \a) - J(\a^0, \a)| =0.
\eeaa
Then $V(\a^0) \le \liminf_{m\to\infty} V(\tilde \a^{0,m}) \le \tilde V_0$. This implies $V_0\le \tilde V_0$ immediately. 
\qed

We now optimize $Y^{0, \a^0}_0$ over $\a^0\in \cA_0$. We first rewrite  the last equation of \reff{drift-FBSDE} in a forward form. That is, given $\a^0$ and $y, \b$, introduce
\bea
\label{drift-forward}
\left.\ba{lll}
\dis X^{\a^0, y, \b}_t = x + \int_0^t \pa_zH_s(X^{\a^0, y, \b}_s, Y^{\a^0, y, \b}_s, \a^0_s) ds + \int_0^t \si_s(X^{\a^0, y, \b}_s, \a^0_s) dB_s;\\
\dis Y^{\a^0, y, \b}_t =  y - \int_0^t \big[\b_s\pa_x \si_s(X^{\a^0, y, \b}_s, \a^0_s)   + \pa_x H_s(X^{\a^0, y, \b}_s, Y^{\a^0, y, \b}_t , \a^0_s) \big]ds +\int_0^t \b_s dB_s;\\
\dis J(\a^0, y, \b) = \dbE\Big[g(X^{\a^0, y, \b}_T) +\int_0^T \big[H_s - Y^{\a^0, y, \b}_s \pa_z H_s\big](X^{\a^0, y, \b}_s, Y^{\a^0, y, \b}_s, \a^0_s)ds\Big].
 \ea\right.
 \eea
 Then, by Proposition \ref{prop-drift-supa2}, we have\footnote{ Given $\a^0$, the $(y, \b)$ satisfying the constraint is actually unique. However, for the convenience of \reff{drift-Vkappalambda} below, we use $\sup_{y, \b}$ here.}
 \bea
 \label{drift-Vkappa2}
\dis V_0 = \sup_{\a^0\in \cA_0}\sup_{(y,\b)\in \dbR\times \dbL^2(\dbF)} J(\a^0, y, \b),\q\mbox{subject to}\q Y^{\a^0, y, \b}_T = \pa_x g(X^{\a^0,y,\b}_T).
 \eea
We next introduce the penalization to get rid of the constraint, for $\l>0$ large:
  \bea
 \label{drift-Vkappalambda}
 \left.\ba{c}
\dis  V_{\l} = \sup_{\a^0\in \cA}\sup_{(y,\b)\in \dbR\times \dbL^2(\dbF)}  J_\l(\a^0, y, \b),\\
\dis\mbox{where}\q J_\l(\a^0, y, \b) := J(\a^0, y, \b) -\l \dbE\Big[\big|Y^{\a^0, y, \b}_T - \pa_x g(X^{\a^0,y,\b}_T)\big|^2\Big].
 \ea\right.
 \eea
 Note that we consider $\a^0\in \cA$ {instead of $\cA_0$} at above.
  
 \begin{assum}
 \label{assum-drift3}
 For any $\l \ge 1$, \reff{drift-Vkappalambda} has ${1\over \l}$-optimal control $(\a^{0,\l}, y^\l, \b^\l)$ such that $\a^{0,\l}\in \cA_0$ and $(u^{\a^{0,\l}}, v^{\a^{0,\l}})\in C^3(\dbF)\times C^2(\dbF)$ with the corresponding derivatives uniformly bounded, uniformly in $\l$. 
 \end{assum}
  \begin{thm}
 \label{thm-drift-VKappalambda}
(i) Let Assumptions \ref{assum-drift} and \ref{assum-drift2} hold. Then $V_0\le V_\l$ for all $\l>0$.
 
\no(ii) Assume further that Assumption \ref{assum-drift3} holds. Then $\lim_{\l\to\infty} V_{\l} = V_0$.
 \end{thm}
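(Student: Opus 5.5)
For part (i), fix $\l>0$ and $\a^0\in\cA_0$. By Proposition \ref{prop-drift-supa2} (i), the FBSDE \reff{drift-FBSDE} is well-posed and $V_0(\a^0)=Y^{0,\a^0}_0$. Take $y:=Y^{1,\a^0}_0$ and $\b:=Z^{1,\a^0}$. The forward system \reff{drift-forward} then coincides with the first and third equations of \reff{drift-FBSDE}, and we argue this by uniqueness of the forward SDE system with given $(y,\b)$, which has Lipschitz coefficients under Assumption \ref{assum-drift}. Hence $X^{\a^0,y,\b}=X^{\a^0}$ and $Y^{\a^0,y,\b}=Y^{1,\a^0}$. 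In particular the terminal constraint $Y^{\a^0,y,\b}_T=\pa_x g(X^{\a^0,y,\b}_T)$ holds, so the penalty in \reff{drift-Vkappalambda} vanishes. Taking expectations in the second equation of \reff{drift-FBSDE} (the stochastic integral is a true martingale, since $Z^0$ is square integrable) gives $J(\a^0,y,\b)=Y^{0,\a^0}_0$. Therefore $Y^{0,\a^0}_0=J_\l(\a^0,y,\b)\le V_\l$. Taking the supremum over $\a^0\in\cA_0$ and using Proposition \ref{prop-drift-supa2} (ii) yields $V_0\le V_\l$.

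For part (ii), $V_\l$ is decreasing in $\l$, so the limit exists and is at least $V_0$ by (i); it remains to show $\limsup_\l V_\l\le V_0$. Let $(\a^{0,\l},y^\l,\b^\l)$ be the $\frac1\l$-optimal controls of Assumption \ref{assum-drift3}, and write $(X,Y):=(X^{\a^{0,\l},y^\l,\b^\l},Y^{\a^{0,\l},y^\l,\b^\l})$ and $(u,v):=(u^{\a^{0,\l}},v^{\a^{0,\l}})$. The strategy is to compare $J_\l(\a^{0,\l},y^\l,\b^\l)$ with $u_0(x)=V_0(\a^{0,\l})\le V_0$, where the equality is Proposition \ref{prop-drift-supa}. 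A priori bound: since $J$ is bounded in terms of the data, $J_\l\ge V_\l-\frac1\l\ge V_0-1$ gives $\l\,\dbE|Y_T-\pa_xg(X_T)|^2\le C$.

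The decoupling step is the key. Set $\Delta_t:=Y_t-\pa_x u_t(X_t)$ and $\Gamma_t:=u_t(X_t)+\int_0^t[H-Y\pa_zH](X_s,Y_s,\a^{0,\l}_s)ds$. Applying the It\^o--Ventzell formula to $\pa_x u_t(X_t)$ and $u_t(X_t)$ along the dynamics of $X$ in \reff{drift-forward}, and using \reff{drift-BSPDE} and \reff{drift-BSPDE2}, should give the following. First, $d\Delta_t=[\text{bounded}\cdot\Delta_t]dt+[\b_t-\pa_xv-\pa_{xx}u\,\si]dB_t$, where the drift is linear in $\Delta$ with coefficient bounded by the uniform bounds on $\pa_{xxx}u,\pa_{xx}v$ and on the derivatives of $H$. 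Second, $d\Gamma_t=[H(X,\pa_xu)-H(X,Y)-(\pa_xu-Y)\pa_zH(X,Y)]dt+dM_t=O(|\Delta_t|^2)dt+dM_t$ by a second-order Taylor expansion of $H$ in $z$. Indeed the $\pa_zH$-drift of $X$ evaluated at $Y$ rather than at $\pa_xu$ produces exactly the first-order term that cancels, leaving a remainder bounded by $\|\pa_{zz}H\|\,|\Delta|^2$. Consequently
$$J(\a^{0,\l},y^\l,\b^\l)-u_0(x)=\dbE[g(X_T)-u_T(X_T)]+\dbE\Big[\int_0^T O(|\Delta_s|^2)ds\Big]\le C\,\dbE\Big[\int_0^T|\Delta_s|^2ds\Big],$$
since $u_T=g$.

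Estimating $\Delta$ is the main obstacle, because $\Delta$ is only controlled at time $T$, where $\Delta_T=Y_T-\pa_xg(X_T)$. The plan is to treat $\Delta$ as the solution of the linear BSDE whose generator is the bounded linear drift above and whose martingale part is the stochastic integral of $\tilde\b:=\b-\pa_xv-\pa_{xx}u\,\si$ with respect to $B$. Standard BSDE estimates with coefficient bounds uniform in $\l$ (Assumption \ref{assum-drift3}) then give $\dbE\int_0^T|\Delta_s|^2ds\le C\,\dbE|\Delta_T|^2\le C/\l$. Thus $V_\l-\frac1\l\le J_\l(\a^{0,\l},y^\l,\b^\l)\le J(\a^{0,\l},y^\l,\b^\l)\le V_0(\a^{0,\l})+C/\l\le V_0+C/\l$, and letting $\l\to\infty$ concludes. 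Care is needed in justifying It\^o--Ventzell and the martingale property, which uses the boundedness of $u,v$ and their derivatives together with $\b\in\dbL^2(\dbF)$.
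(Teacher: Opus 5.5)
Your part (i) is the paper's argument, made explicit. The paper only observes that constrained triples satisfy $J_\l=J$ and then invokes \reff{drift-Vkappa2}.

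For part (ii) you take a genuinely different route, and it is correct. The paper treats the forward system with the $\frac1\l$-optimal controls as the coupled FBSDE (first and third equations of \reff{drift-FBSDE}) with a perturbed terminal condition. It then invokes the stability estimate that comes from the well-posedness of the BSPDE (four-step scheme): $(X,Y,\b)$ is $O(\l^{-1/2})$-close in $L^2$ to $(X^{\a^0},Y^{1,\a^0},Z^{1,\a^0})$. Finally it compares $J(\a^0,y,\b)$ with $Y^{0,\a^0}_0$ at first order, which only gives $V_\l\le V_0+C/\sqrt\l$.

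You bypass the comparison of state processes altogether. Your identity for $\Gamma$ compares directly with $u_0(x)=V_0(\a^{0,\l})$ and shows that the first variation of the cost vanishes at $Y=\pa_xu(X)$. The reason is that the drift $\pa_zH$ and the running cost $H-z\pa_zH$ are exactly the tangent-line data of $H(X,\cdot)$ at $z=Y$. Hence $J-u_0(x)$ is a quadratic remainder in $\Delta$, and the linear BSDE for $\Delta$ yields the sharper rate $V_\l\le V_0+C/\l$. Quantitatively, only the uniform bound on $\pa_{xx}u^{\a^{0,\l}}$, together with the bounds on $\pa_x\si$ and on the second derivatives of $H$, enters your constants. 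In exchange, the paper's route is modular. It also shows that near-optimal $(X,Y,\b)$ are close to the true FBSDE solution, which is useful for approximating controls and not just the value.

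There are two computational slips; neither affects the conclusion.

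First, the drift of $\Delta$ is $-c_t\Delta_t-\pa_x\si\,\tilde\b_t$ with $\tilde\b:=\b-\pa_xv-\pa_{xx}u\,\si$ and $c$ bounded. The $\pa_x\si$-terms from $dY$ and from \reff{drift-BSPDE2} combine into $-\pa_x\si\,\tilde\b$, so the generator is linear in $(\Delta,\tilde\b)$, not only in $\Delta$. The standard linear BSDE estimate still covers this, since $\pa_x\si$ is bounded.

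Second, the $dt$-part of $d\Gamma$ is $H(X,Y)-H(X,\pa_xu)-(Y-\pa_xu)\pa_zH(X,Y)=-\frac12\pa_{zz}H(\cdot)\,\Delta_t^2\ge 0$. This is the opposite sign to what you wrote; the nonnegativity holds because $H$ is concave in $z$ as an infimum of affine functions. The error is harmless since you only use the absolute value. The correct sign also shows that $J(\a^0,y,\b)\ge V_0(\a^0)$ for every $(y,\b)$.

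Finally, in (i) the forward system \reff{drift-forward} is not Lipschitz in the usual sense. The term $\b_s\pa_x\si_s(X_s,\a^0_s)$ involves an unbounded $\b$ and a $\pa_x\si$ that is only uniformly continuous. The paper also takes the well-posedness of \reff{drift-forward} for granted, so your proof is on par with the source here.
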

 \proof For  any $(\a^0, y, \b)\in \cA_0\times \dbR^n\times (\dbL^2(\dbF))^n$ satisfying the constraint in \reff{drift-Vkappa2}, clearly $J_\l(\a^0, y, \b) = J(\a^0, y, \b)$. Then $V_0\le V_{\l}$ for any $\l$. On the other hand, fix $\l\ge 1$ and choose ${1\over \l}$-optimal control $(\a^{0}, y, \b)=(\a^{0,\l}, y^\l, \b^\l)$ satisfying the requirements in Assumption \ref{assum-drift3}. Then 
 \bea
 \label{drift-lambda}
 J_\l(\a^0, y, \b) \ge V_{\l}-{1\over \l}\ge V_0-1.
 \eea
   Since $f$ and $g$ are bounded, then
 \beaa
 V_0 - 1\le J(\a^0, y, \b) -\l \dbE\Big[\big|Y^{\a^0, y, \b}_T - \pa_x g(X^{\a^0,y,\b}_T)\big|^2\Big]\le C -\l \dbE\Big[\big|Y^{\a^0, y, \b}_T - \pa_x g(X^{\a^0,y,\b}_T)\big|^2\Big].
 \eeaa
 Thus
 \beaa
 \dbE\Big[\big|Y^{\a^0, y, \b}_T - \pa_x g(X^{\a^0,y,\b}_T)\big|^2\Big] \le {C\over \l}.
 \eeaa
 Notice that
 \beaa
\left.\ba{lll}
\dis X^{\a^0, y, \b}_t = x + \int_0^t \pa_zH_s(X^{\a^0, y, \b}_s, Y^{\a^0, y, \b}_s, \a^0_s) ds + \int_0^t \si_s(X^{\a^0, y, \b}_s, \a^0_s) dB_s;\\
\dis Y^{\a^0, y, \b}_t =  \pa_x g(X^{\a^0,y,\b}_T) + [Y^{\a^0, y, \b}_T - \pa_x g(X^{\a^0,y,\b}_T)]  -\int_t^T \b_s dB_s\\
\dis\qq\qq + \int_t^T \big[\b_s\pa_x \si_s(X^{\a^0, y, \b}_s, \a^0_s)   + \pa_x H_s(X^{\a^0, y, \b}_s, Y^{\a^0, y, \b}_t , \a^0_s) \big]ds,
 \ea\right.
 \eeaa
 and that the first and third equations in \reff{drift-FBSDE} form a self-contained coupled FBSDE. The well-posedness of the BSPDE \reff{drift-BSPDE} implies the well-posedness of the latter FBSDE, and that
 \beaa
&&\dis \dbE\Big[\sup_{0\le t\le T} [|X^{\a^0, y, \b}_t -X^{\a^0}_t|^2 +  |Y^{\a^0, y, \b}_t -Y^{1,\a^0}_t|^2\big] + \int_0^T |\b_t -Z^{1,\a^0}_t|^2dt\Big]\\
 &&\dis \le C\dbE\Big[\big|Y^{\a^0, y, \b}_T - \pa_x g(X^{\a^0,y,\b}_T)\big|^2\Big] \le {C\over \l}.
 \eeaa
 In particular, the constant $C$ is independent of $\a^0=\a^{0,\l}$, and hence independent of $\l$, due to Assumption \ref{assum-drift3}. Then, by the second equation of \reff{drift-FBSDE} and the last equation of \reff{drift-forward},
 \beaa
 |Y^{0,\a^0}_0-J(\a^0, y, \b)|\le {C\over \sqrt{\l}}.
 \eeaa
 By \reff{drift-lambda} and \reff{drift-Vkappalambda}, this implies that
\beaa
V_{\l} \le J_\l(\a^0, y, \b) + {1\over \l} \le J(\a^0, y, \b) + {1\over \l} \le Y^{0, \a^0}_0 + {C\over \sqrt{\l}} + {1\over \l}\le V_0 + {C\over \sqrt{\l}}.
\eeaa
Thus $\lim_{\l\to\infty} V_{\l} \le V_0$, and hence equality holds.
\qed
 
 Finally we compute $V_{\l}$. Clearly 
 \bea
 \label{drift-Vly}
 V_\l=\sup_{y\in \dbR} V_\l(y),\q V_\l(y) := \sup_{\a^0\in \cA, \b\in \dbL^2(\dbF)}  J_\l(\a^0, y, \b).
 \eea
 Then the following result is obvious.
\begin{thm}
\label{thm-drift-leader1}
Let Assumptions \ref{assum-drift},  \ref{assum-drift2}, and \ref{assum-drift3} hold, and the following BSPDE has a classical solution $\big(u^\l_t(x, y), v^\l_t(x, y)\big)$: with $\si = \si_t(x, a_0)$ and $H = H_t(x,y, a_0)$,
\bea
\label{drift-HJB1}
\left.\ba{lll}
\dis d u^\l_t(x, y) =  v^\l_t(x, y) dB_t - \sup_{a_0\in A, \b\in \dbR} \Big[{1\over 2}\pa_{xx} u^\l \si^2_t  +{1\over 2} \pa_{yy} u \b^2 + \pa_{xy} u \si_t \b \\
\dis \qq + \pa_x u^\l \pa_z H  - \pa_y u [\b \pa_x\si + \pa_x H] + \pa_x v^\l \si + \pa_y v^\l \b + H - y\pa_z H\Big]dt, \\
\dis u^\l_T(x, y) = g(x) - \l |y  - g(x)|^2.
\ea\right.
\eea
Then $V_{\l}(y) = u^{\l}_0(x, y)$.
\end{thm}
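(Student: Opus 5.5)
We prove this by a verification argument for the stochastic control problem $V_\l(y)$ in \reff{drift-Vly}, with $y$ frozen. Here the pair $(X^{\a^0,y,\b},Y^{\a^0,y,\b})$ of \reff{drift-forward} is a two-dimensional controlled state starting at $(x,y)$. The controls are $(\a^0,\b)\in\cA\times\dbL^2(\dbF)$, the running reward is $[H-Y\pa_z H](X,Y,\a^0)$, and the terminal reward is $g(X_T)-\l|Y_T-\pa_x g(X_T)|^2$. We read the terminal condition in \reff{drift-HJB1} as $g(x)-\l|y-\pa_xg(x)|^2$, in accordance with \reff{drift-Vkappalambda}. Since the coefficients are random fields, the natural tool is the It\^o--Ventzell formula applied to $u^\l_t(X_t,Y_t)$, as in the derivation of \reff{drift-BSDEs}.

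\textbf{Step 1 (upper bound).} Fix arbitrary $(\a^0,\b)$ and write $(X,Y)=(X^{\a^0,y,\b},Y^{\a^0,y,\b})$. We apply It\^o--Ventzell to $u^\l_t(X_t,Y_t)$. The $dt$-terms are:
\begin{itemize}
\item the drift of $u^\l$ itself, namely minus the supremum in \reff{drift-HJB1};
\item the first-order terms $\pa_x u^\l\,\pa_zH-\pa_y u^\l[\b\pa_x\si+\pa_xH]$;
\item the second-order terms $\frac12\pa_{xx}u^\l\si^2+\frac12\pa_{yy}u^\l\b^2+\pa_{xy}u^\l\si\b$;
\item the cross-variation terms $\pa_x v^\l\si+\pa_y v^\l\b$, coming from the martingale part $v^\l\,dB$ of $u^\l$.
\end{itemize}
Adding the running reward $H-Y\pa_zH$ produces exactly the expression inside the supremum, evaluated at $(a_0,\b)=(\a^0_t,\b_t)$. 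Hence the total drift is $\le0$. Integrating over $[0,T]$ and taking expectations gives $J_\l(\a^0,y,\b)\le u^\l_0(x,y)$, provided the stochastic integral is a true martingale. For that we use the boundedness of $u^\l$, $v^\l$ and their derivatives (part of ``classical solution''), $\b\in\dbL^2$, and the bounded coefficients from Assumption~\ref{assum-drift}. Taking the supremum over the controls gives $V_\l(y)\le u^\l_0(x,y)$.

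\textbf{Step 2 (lower bound).} For $\e>0$, pick a measurable $\e$-maximizer $(\a^{0,\e}_t,\b^\e_t)$ of the Hamiltonian, evaluated pointwise at $(t,X_t,Y_t,\omega)$ along the state. This requires solving the closed-loop SDE system \reff{drift-forward}. Since existence of strong solutions with merely measurable feedback is delicate, we proceed as follows. We discretize time on a grid with mesh $h$ and freeze the feedback on each subinterval at the left endpoint state. This yields a genuinely open-loop progressively measurable control. The continuity of the Hamiltonian integrand in $(x,y)$ (from the regularity of $u^\l,v^\l$ and Assumption~\ref{assum-drift}) then makes the drift in Step~1 $\ge -\e-\rho(h)$ in expectation. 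Here we truncate $\b$ to a large compact set, which is legitimate because the supremum over $\b$ is finite and we may restrict it to a bounded set up to $\e$. Sending $h\to0$ and then $\e\to0$ gives $V_\l(y)\ge u^\l_0(x,y)$.

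\textbf{Main obstacle.} The main difficulty is Step~2: constructing near-optimal open-loop controls from the Hamiltonian with random coefficients, and justifying uniform integrability when $\b$ is unbounded. Step~1 is a routine It\^o--Ventzell computation once the classical regularity is granted. Since the statement assumes a classical solution, we will present the truncation-and-discretization argument only briefly, in the spirit of the standard verification theorem \cite{Peng}.
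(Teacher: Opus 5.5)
Your proposal is correct and is essentially what the paper means when it calls the result obvious: a standard verification theorem for the control problem \reff{drift-Vly} with $y$ frozen, obtained by applying the It\^o--Ventzell formula to $u^\l_t(X_t,Y_t)$. You are also right to read the terminal condition as $g(x)-\l|y-\pa_x g(x)|^2$, to match \reff{drift-Vkappalambda}.

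One small correction concerns Step 1. Because of this quadratic terminal penalty, $u^\l$ and $\pa_y u^\l$ cannot be bounded, so the boundedness you invoke for the martingale property is unavailable. Justify that step instead by localization, using a quadratic-growth condition on $u^\l$ together with $\dbE[\sup_t|Y_t|^2]<\infty$. The latter holds because $\b\in\dbL^2(\dbF)$ and $\pa_x\si$, $\pa_x H$ are bounded.
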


We remark that in this paper we focus on the structure, thus at above we assume Assumption \ref{assum-drift3} and the uniform well-posedness of BSPDE \reff{drift-BSPDE}, which are very strong requirements. We shall try to weaken them in future research.

\begin{rem}
\label{rem-drift-leader1}
(i) The value $V_{\l}$ in \reff{drift-Vkappalambda} is well-defined without assuming the the well-posedness of BSPDE \reff{drift-BSPDE}. It will be interesting to explore weaker conditions under which $\lim_{\l\to\infty} V_{\l} = V_0$.

\no (ii) Since $\b$ is unbounded, in general the BSPDE \reff{drift-HJB1} may have facelifting issue, namely it is possible that $V_{\l}(T-, x, y) \neq g(x) - \l |y  - g(x)|^2$, {see e.g. \cite{BCS, ST}.} We do not investigate this issue in this paper.
\end{rem}

\begin{rem}
\label{rem-drift-leader2}
Since the optimization problem $V_\l(y)$ in \reff{drift-Vly} has open loop controls $\a^0, \b$, one can also apply the stochastic maximum principle to derive the first order conditions. We leave this to interested readers.
\end{rem}

 \section{Zero sum games with open loop controls: the general case}
\label{sect-vol}
\setcounter{equation}{0} 
In this section, we consider the following problem with mean field controls:
\bea
\label{vol-V0}
\left.\ba{c}
\dis X^{\hat\a}_t := x + \int_0^t b(\L^{\hat\a}_s, \hat\a_s) ds + \int_0^t \si(\L^{\hat\a}_s, \hat\a_s)dB_s,\q\mbox{where}~ \L^{\hat\a}_t := \big(t, \cL_{(B_{\cd\wedge t}, X^{\hat \a}_t)},B_{\cd\wedge t}, X^{\hat \a}_t\big), \\
 \dis V_0 := \sup_{\a^0\in \cA}\inf_{\a\in \cA} J(\hat\a), \q J(\hat\a):=\dbE\Big[ g(\L_T^{\hat\a}) + \int_0^T f(\L_s^{\hat\a}, \hat\a_s)ds\Big].
 \ea\right.
 \eea 
 In light of \reff{Vlambda-2}, here $X = (X^1, \cds, X^d)$ are $d$-dimensional, but for notational simplicity, we still assume $ B$ is one dimensional, and thus $b, \si$ are $\dbR^d$-valued. We denote $\bX_{\cd\wedge s} := (B_{\cd\wedge s}, X_s)$, and $b, \si, f, g$ are deterministic functions. Denote $\dbX^d := C([0, T])\times \dbR^d$, and $\bx = (\bx^0, x)\in \dbX^d$, $\bx_{\cd\wedge t} := (\bx^0_{\cd\wedge t}, x)$. We say a function $\f$ on $[0, T]\times \cP_2(\dbX^d)\times \dbX^d $ is adapted if 
 \beaa
 \f(t, \cL_\bX, \bx) = \f(t, \cL_{\bX_{\cd\wedge t}}, \bx_{\cd\wedge t}).
 \eeaa
 Throughout this section, all the path dependent functions are required to be adapted, and thus we don't have to write $_{\cd\wedge t}$ explicitly. 
 
  \begin{assum}
 \label{assum-vol1}
 The coefficients $b, \si, f, g$ are bounded, adapted, and continuous in all variables, and  uniformly Lipschitz continuous in $(\mu, \bx)\in \cP_2(\dbX^d)\times \dbX^d$.
 \end{assum}
 We remark that, although $B$ is a Brownian motion, it stands for the state process in the weak formulation of the original leader follower problem, see \reff{X}. So it is reasonable to assume the  Lipschitz continuity of the coefficients with respect to $(\mu, \bx)$. However, as mentioned in the previous section,  in this paper we focus on the structure, and thus we may assume very strong conditions later in this section. We shall try to weaken them in future research,
 
Under Assumption \ref{assum-vol1}, we may derive a counterpart of Proposition \ref{prop-drift-supa}. However, due to the volatility control, in this case we are not able to derive the FBSDE characterization as in Proposition \ref{prop-drift-supa2}. For this purpose, we introduce an additional randomness. Let $B^1, \cds, B^d$ be independent Brownian motions, and be independent of $B^0:= B$. Set $\dbF' := \dbF^{B^0, B^1, \cds, B^d}$, and let $\cA'$ denote the set of $\dbF'$-progressively measurable processes. Fix a small parameter $\k>0$, for $i=1,\cds, d$, introduce:
 \bea
\label{vol-Vk}
\left.\ba{lll}
\dis X^{\k,\hat\a,i}_t := x_i + \int_0^t b_i(\L_s^{\k,\hat\a}, \hat\a_s) ds + \int_0^t \si_i(\L_s^{\k,\hat\a}, \hat\a_s)dB_s + \k B^i_s,\q \L_s^{\k,\hat\a} :=\big(s,  \cL_{\bX^{\k,\hat\a}_{\cd\wedge s}}, \bX^{\k,\hat\a}_{\cd\wedge s}\big), \\
 \dis V_\k := \sup_{\a^0\in \cA} V_\k(\a^0),\q V_\k(\a^0):=\inf_{\a\in \cA'} J_\k(\hat\a),\q J_\k(\hat\a):=\dbE\Big[ g(\L_T^{\k,\hat\a}) + \int_0^T f(\L_s^{\k,\hat\a}, \hat\a_s)ds\Big].
 \ea\right.
 \eea 
 We emphasize that $\a$ is $\dbF'$-measurable, but $\a^0$ is still $\dbF$-measurable.  
 
 \begin{lem}
 \label{lem-vol-Vk}
 Under Assumption \ref{assum-vol1}, we have $|V_\k - V_0|\le C\k$.
 \end{lem}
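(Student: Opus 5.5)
The plan is to compare both values with an intermediate value $V_0'$: the problem \reff{vol-V0} with $\k=0$, but with the follower allowed to use controls in $\cA'$, that is,
\beaa
V_0' := \sup_{\a^0\in \cA}\inf_{\a\in \cA'} J(\hat\a).
\eeaa
The estimate then splits as $|V_\k - V_0|\le |V_\k - V_0'| + |V_0' - V_0|$. The first term is a pure stability estimate in $\k$. The second term compares the enlarged filtration $\dbF'$ with $\dbF$ and contains no $\k$ at all.

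\textbf{Step 1: stability in $\k$.} Fix $\a^0\in\cA$ and $\a\in\cA'$, and compare $X^{\k,\hat\a}$ with $X^{\hat\a}$, the latter driven by the same controls.
\begin{itemize}
\item The only differences are the additive term $\k B^i$ and the difference in the coefficients. By Assumption \ref{assum-vol1} the coefficients are Lipschitz in $(\mu,\bx)$, and on the Wasserstein component $W_2(\cL_{\bX^{\k}_{\cd\wedge s}},\cL_{\bX_{\cd\wedge s}})^2\le \dbE[\sup_{r\le s}|X^\k_r-X_r|^2]$.
\item A standard BDG plus Gronwall argument then gives
\beaa
\dbE\Big[\sup_{t\le T}|X^{\k,\hat\a}_t-X^{\hat\a}_t|^2\Big]\le C\k^2,
\eeaa
with $C$ independent of $\hat\a$, because the Lipschitz constants are uniform in the control variable.
\item By the Lipschitz continuity of $f$ and $g$ this yields $|J_\k(\hat\a)-J(\hat\a)|\le C\k$, uniformly in $\hat\a\in\cA\times\cA'$.
\item Taking $\inf_{\a\in\cA'}$ and then $\sup_{\a^0\in\cA}$ gives $|V_\k-V_0'|\le C\k$.
\end{itemize}

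\textbf{Step 2: $V_0'=V_0$.} Since $\cA\subset\cA'$, we have $V_0'\le V_0$ immediately. For the reverse inequality, fix $\a^0\in\cA$ and show that
\beaa
\inf_{\a\in\cA'}J(\a^0,\a)\ \ge\ \inf_{\a\in\cA}J(\a^0,\a).
\eeaa
\begin{itemize}
\item For fixed $\a^0$, the follower's problem is a McKean--Vlasov control problem with coefficients that are random only through $B$ and $\a^0$. A control $\a\in\cA'$ merely adds extra randomness independent of $B$, so it behaves like a relaxed (randomized) control.
\item I would first try to invoke the known equivalence of strong, weak and relaxed formulations for mean field control under Lipschitz, continuous and bounded data (Lacker; Djete--Possama\"i--Tan). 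This gives that the value over $\dbF'$-controls equals the value over $\dbF^B$-controls. The setting must be adapted so that $B_{\cd\wedge t}$ is treated as part of the state and $\a^0$ is treated as a given $\dbF$-adapted input.
\item If a direct citation does not fit, the fallback is an approximation argument.
  \begin{itemize}
  \item First approximate $\a$ by a piecewise constant control on a time grid that depends continuously on finitely many increments of $B,B^1,\dots,B^d$.
  \item Then replace the increments of $B^i$ on $[t_k,t_{k+1}]$ by functionals of the fine-scale behaviour of $B$ on a tiny subinterval $[t_k,t_k+\d]$. This produces random variables with nearly the right law, nearly independent of the coarse increments of $B$.
  \item The state dynamics depend on $B$ only through its coarse behaviour and through $\a^0$, which must also be frozen approximately on the grid using the uniform continuity of the coefficients. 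Consequently the joint law of (coarse $B$, $\a^0$, pseudo-noise) is close to the true one, and stability of the McKean--Vlasov dynamics in law gives $J(\a^0,\tilde\a)\le J(\a^0,\a)+\e$ for some $\tilde\a\in\cA$.
  \end{itemize}
\end{itemize}

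\textbf{Main obstacle.} I expect Step 2 to be the hard part: the follower in $V_\k$ has access to the independent noises $B^1,\dots,B^d$, which the follower in $V_0$ lacks. Making the ``extra randomization does not help'' argument rigorous is delicate when $\a^0$ is an arbitrary $\dbF$-progressive process and the law of the path $B_{\cd\wedge t}$ enters the coefficients. Once $V_0'=V_0$ is established, Step 1 closes the argument and gives $|V_\k-V_0|\le C\k$.
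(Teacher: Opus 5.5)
Your argument is essentially the paper's proof. The paper also passes through the intermediate value $\sup_{\a^0\in\cA}\inf_{\a\in\cA'}J(\hat\a)$, asserts that it equals $V_0$ because $B^1,\cds,B^d$ are independent of $B$, and then closes with the uniform estimate $\sup_{\a^0\in\cA,\a\in\cA'}|J_\k(\hat\a)-J(\hat\a)|\le C\k$, exactly as in your Step 1. The only difference is that the paper calls $V_0'=V_0$ ``clear.'' You correctly observe that, because the cost depends on the law $\cL_{(B_{\cd\wedge t},X_t)}$, a randomized follower is not simply an average of $\dbF$-adapted followers, so your Step 2 sketch fills in a justification the paper leaves out rather than changing the approach.
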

 \proof First, since $B^1, \cds, B^d$ are independent of $B$, it is clear that 
 \beaa
 \sup_{\a^0\in \cA}\inf_{\a\in \cA'} J(\a^0, \a) = V_0.
 \eeaa
 Moreover, for any $\a^0\in \cA$, $\a\in \cA'$, by standard estimates we have
 \beaa
 \dbE\big[\sup_{0\le t\le T}|X^{\k,\hat\a}_t - X^{\hat\a}_t|^2\big]\le C\k^2,\q |J_\k(\hat\a) - J(\hat\a)|\le C\k.
 \eeaa
 This clearly implies $|V_\k-V_0|\le \sup_{\a^0\in \cA, \a\in \cA'}|J_\k(\hat\a) - J(\hat\a)| \le C\k$.
 \qed

  Due to the involvement of $\cL_{\bX}$, clearly the BSPDE \reff{drift-BSPDE} should be replaced with an HJB equation on Wasserstein space of probability measures. Let ${\d\over \d\mu}$ denote the linear functional derivative, and $\pa_{\bx^0}$ the Dupire derivative, see \cite{WZ}. The following result is standard for mean field control problems. 
 \begin{prop}
 \label{prop-vol-supa}
 Let Assumption \ref{assum-vol1} hold and fix $\a^0\in \cA$.  Assume  the following HJB equation on $[0, T]\times \cP_2(\dbX^d)$ has a classical solution  $U^{\k,\a^0}(t, \mu)$ with bounded derivatives:
 \bea
 \label{vol-BPDE}
 \left.\ba{lll}
 \dis \pa_t U(t, \mu) + \int_{\dbX^d} \Big[ {1\over 2} \pa_{\bx^0\bx^0} {\d U\over \d\mu}+ H\big(\cd, \pa_x{\d U\over \d\mu}, \pa_{\bx^0 x}{\d U\over \d\mu}, \pa_{xx}{\d U\over \d\mu}, \a^0(s, \bx^0)\big)\Big](t, \mu, \bx)  \mu(d\bx)=0,\\
 \dis U(T, \mu) = \int_{\dbX^d}g(\mu, \bx)\mu(d\bx),
 \ea\right.
 \eea
 where, for $z\in \dbR^{d}$, $\g^0\in \dbR^d$, and symmetric matrix $\g\in \dbR^{d\times d}$, denoting $\hat\g := (\g^0, \g)$,
 \bea
 \label{vol-H}
 \left.\ba{lll}
\dis H(t, \mu, \bx, z, \hat\g, a_0) := \inf_{a\in A} h(t, \mu,\bx,z,  \hat\g, a_0, a),\\
 \dis :=\inf_{a\in A} \Big[ \sum_{1\le i<j\le d} \g_{ij} \si_i(\cd)\si_j(\cd)  + \sum_{i=1}^d\big[{1\over 2} \g_{ii} [\si_i^2(\cd) +\k^2] + \g^0_{i}\si_i(\cd) +  z_ib_i(\cd)\big] + f(\cd)\Big](t,\bx, \mu, \hat a).
 \ea\right.
 \eea
Then $V_\k(\a^0) = U^{\k,\a^0}(0, \d_{(0,x)})$.
 \end{prop}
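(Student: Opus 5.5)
Fix $\a^0\in\cA$. For this fixed $\a^0$, the problem $V_\k(\a^0)=\inf_{\a\in\cA'}J_\k(\a^0,\a)$ is a mean field control problem in which only the follower optimizes. The plan is the standard verification argument on the Wasserstein space: show $J_\k(\a^0,\a)\ge U^{\k,\a^0}(0,\d_{(0,x)})$ for every $\a\in\cA'$, and then produce (approximately) optimal controls attaining equality. Throughout write $U=U^{\k,\a^0}$ and $\mu_t:=\cL_{\bX^{\k,\hat\a}_{\cd\wedge t}}$. Note $\mu_0=\d_{(0,x)}$, and by the terminal condition $\dbE[g(\L^{\k,\hat\a}_T)]=U(T,\mu_T)$.

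The first step is the Itô formula for $t\mapsto U(t,\mu_t)$ along the flow of path-dependent laws, in the form of \cite{WZ}, combining the linear functional derivative in $\mu$ with Dupire derivatives in the path $\bx^0$:
\begin{equation*}
\frac{d}{dt}U(t,\mu_t)=\pa_tU(t,\mu_t)+\dbE\Big[\big(\tfrac12\pa_{\bx^0\bx^0}\tfrac{\d U}{\d\mu}+\textstyle\sum_i\pa_{x_i}\tfrac{\d U}{\d\mu}\,b_i+\sum_i\pa_{\bx^0x_i}\tfrac{\d U}{\d\mu}\,\si_i+\tfrac12\sum_{i,j}\pa_{x_ix_j}\tfrac{\d U}{\d\mu}\,(\si_i\si_j+\k^2\1_{i=j})\big)(t,\mu_t,\bX^{\k,\hat\a}_{\cd\wedge t})\Big],
\end{equation*}
where $b,\si$ are evaluated at $(\L^{\k,\hat\a}_t,\hat\a_t)$. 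The cross terms with $B^i$ vanish because $B^i$ is independent of $B^0$. The bracket is exactly $h(\cd,z,\hat\g,\a^0_t,\a_t)-f$ with $z=\pa_x\frac{\d U}{\d\mu}$ and $\hat\g=(\pa_{\bx^0x}\frac{\d U}{\d\mu},\pa_{xx}\frac{\d U}{\d\mu})$. Since $\a^0_t$ is a functional $\a^0(t,B_{\cd\wedge t})$ of the first path component, it enters only through $\bx^0$, which is consistent with the form $\a^0(s,\bx^0)$ in \reff{vol-BPDE}.

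Integrating from $0$ to $T$ and adding $\dbE\int_0^Tf\,ds$ then gives
\begin{equation*}
J_\k(\hat\a)=U(0,\d_{(0,x)})+\dbE\int_0^T\Big[h(\cd,\a_t)-H(\cd,\a^0_t)\Big](t,\mu_t,\bX^{\k,\hat\a}_{\cd\wedge t})\,dt.
\end{equation*}
Here we have used the PDE \reff{vol-BPDE} to replace $\pa_tU$ by $-\dbE[\tfrac12\pa_{\bx^0\bx^0}\frac{\d U}{\d\mu}+H]$. Since $h\ge H$ pointwise, this yields $J_\k(\hat\a)\ge U(0,\d_{(0,x)})$ for every $\a\in\cA'$, hence $V_\k(\a^0)\ge U(0,\d_{(0,x)})$. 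Boundedness of the coefficients and of the derivatives of $U$ justifies taking expectations, since the martingale terms have zero mean.

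For the reverse inequality, fix $\e>0$ and choose a measurable $\e$-minimizer $a^\e(t,\mu,\bx)$ of $h$. By continuity of the coefficients and of the derivatives of $U$, this can be done locally uniformly, and one can mollify or discretize in time to obtain a regular feedback. Then define the closed-loop control $\a_t=a^\e(t,\mu_t,\bX_{\cd\wedge t})$, which is $\dbF'$-progressively measurable provided the McKean–Vlasov SDE \reff{vol-Vk} under this feedback has a solution. The identity above then gives $J_\k\le U(0,\d_{(0,x)})+\e T$.

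The main obstacle is well-posedness of this closed-loop McKean–Vlasov SDE, because $a^\e$ need not be Lipschitz. To handle it, I would first replace $a^\e$ by a piecewise-constant-in-time feedback, frozen on a grid $t_k$, with values depending on $(\mu_{t_k},\bX_{\cd\wedge t_k})$. On each interval the SDE then has Lipschitz coefficients and is solvable, and the error in the Hamiltonian gap vanishes as the mesh goes to $0$ by uniform continuity. The nondegenerate noise $\k B^i$ also gives additional flexibility if needed. Letting $\e\to0$ yields $V_\k(\a^0)=U^{\k,\a^0}(0,\d_{(0,x)})$.
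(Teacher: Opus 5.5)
Your lower bound is exactly the standard verification argument the paper has in mind; the paper gives no proof and only calls the result standard for mean field control. That part is correct: the Itô formula along $\mu_t=\cL_{\bX^{\k,\hat\a}_{\cd\wedge t}}$, the HJB equation evaluated at $\mu=\mu_t$ with $\a^0(t,B_{\cd\wedge t})=\a^0_t$, and $h\ge H$ pointwise give the identity $J_\k(\hat\a)=U^{\k,\a^0}(0,\d_{(0,x)})+\dbE\int_0^T[h-H]\,dt$. This yields $J_\k(\hat\a)\ge U^{\k,\a^0}(0,\d_{(0,x)})$ for every $\a\in\cA'$, whatever extra randomness $\a$ uses.

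The step that fails as written is the time discretization in the reverse inequality. Your $\e$-minimizer $a^\e(t,\mu,\bx)$ has $a_0=\a^0(t,\bx^0)$ built into it, and $\a^0$ is only $\dbF$-progressively measurable, with no continuity in $t$ or in $\bx^0$. So $a^\e$ cannot be chosen locally uniformly in these variables. Moreover, the frozen control $\a_t=a^\e(t_k,\mu_{t_k},\bX_{\cd\wedge t_k})$ on $[t_k,t_{k+1})$ is $\e$-optimal against $a_0=\a^0_{t_k}$, while the gap $h-H$ at time $t$ is evaluated at $a_0=\a^0_t$. No uniform continuity makes this error vanish as the mesh shrinks. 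Grid samples of a merely measurable process need not approximate it in $dt\times d\mathbb{P}$-measure; indeed the values of $\a^0$ at the grid times are not even determined by its equivalence class.

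The fix is to freeze only the endogenous arguments. On $[t_k,t_{k+1})$, set $\a_t:=a^\e\big(t,\mu_{t_k},(B_{\cd\wedge t},X_{t_k})\big)$, keeping $t$, the path $B_{\cd\wedge t}$, and hence $a_0=\a^0_t$ current.
\begin{itemize}
\item Since $B$ and $\a^0$ are exogenous, this $\a$ is a given $\dbF'$-progressively measurable process on each subinterval. The McKean--Vlasov SDE is therefore solved interval by interval with coefficients Lipschitz in $(\mu,\bx)$.
\item The freezing error in $h-H$ now involves only the $(\mu,x)$-arguments. It is controlled uniformly in $\hat a$ by three facts: the uniform Lipschitz continuity of $b,\si,f$ in $(\mu,\bx)$; the boundedness of $b,\si$; and the continuity of the bounded derivatives of $U^{\k,\a^0}$ on the relatively compact family of laws and paths generated by bounded coefficients.
\end{itemize}
With this change your argument goes through.
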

 
 Note that, given $(t,\mu, \bx, z, \hat \g, a_0)$, the optimal $a^*$ of the above Hamiltonian $H$ satisfies:
\bea
\label{vol-a*}
\left.\ba{c}
\dis b(s, \mu, \bx,  a_0, a^*)= \pa_{z} H(s,\mu, \bx, z, \hat\g, a_0),\q \si(s, \bx, \mu, a_0, a^*)= \pa_{\g^0} H(s,\mu, \bx, z, \hat\g, a_0),\\
 \dis f(s,\mu, \bx,  a_0, a^*) =   F(s,\mu, \bx, z, \hat\g, a_0) := \Big[H - \sum_{1\le i<j\le d} \g_{ij}  \pa_{\g^0_{i}} H\pa_{\g^0_{j}} H\\
 \dis\qq \qq - \sum_{i=1}^d \big[{1\over 2} \g_{ii} [\pa_{\g^0_{i}} H +\k^2] + \g^0_{i}\pa_{\g^0_{i}} H +  z_i\pa_{z_i} H\big]\Big](s,\mu, \bx, z, \hat\g, a_0).
 \ea\right.
\eea
Introduce an SDE system, for $i=1,\cds, n$ and for $U:= U^{\k, \a^0}$:
\bea
\label{vol-SDE}
\left.\ba{c}
\dis X^{\k, \a^0, i}_t = x_i + \int_0^t \pa_{z_i}H\Big(\L^{\k,\a^0}_s, \pa_x {\d U\over \d \mu}(\L^{\k,\a^0}_s), \pa_{\bx^0 x} {\d U\over \d \mu}(\L^{\k,\a^0}_s), \pa_{xx} {\d U\over \d \mu}(\L^{\k,\a^0}_s), \a^0_s\Big)ds\\
\dis + \int_0^t  \pa_{\g^0_{i}}H\Big(\L^{\k,\a^0}_s, \pa_x {\d U\over \d \mu}(\L^{\k,\a^0}_s), \pa_{\bx^0 x} {\d U\over \d \mu}(\L^{\k,\a^0}_s), \pa_{xx} {\d U\over \d \mu}(\L^{\k,\a^0}_s), \a^0_s\Big) dB_s + \k B^i_t,
\ea\right.
 \eea
where $\L^{\k,\a^0}_s:=\big(s, \cL_{\bX^{k,\a^0}_{\cd\wedge s}}, \bX^{\k, \a^0}_{\cd\wedge s}\big)$, and denote
\bea
\label{vol-Y}
Y^{\k, \a^0}_t := \pa_x {\d U^{\k, \a^0}\over \d\mu} (\L^{\k,\a^0}_t),\q G(\mu, \bx) = \pa_{x}g(\mu, \bx) +  \int_{\dbX^d}\pa_x{\d g\over \d\mu}(\mu, \bx, \tilde\bx)\mu(d\tilde \bx).
\eea

To derive $d Y^{\k, \a^0}_t$, we differentiate \reff{vol-BPDE} with respect to $\pa_x {\d\over \d\mu}$ and denote $v_i := \pa_{x_i}{\d U\over \d\mu}$:
  \bea
 \label{vol-BPDEi}
 \left.\ba{lll}
 \dis \pa_t v_i(t, \mu, \bx) + {1\over 2} \pa_{\bx^0\bx^0} v_i(t,\mu, \bx)  + \Big[\pa_{x_i} H(\cd) + \pa_z H(\cd)\cd \pa_x v_i + \pa_{\g^0}H(\cd) \cd \pa_{\bx^0 x} v_i \\
 \dis\qq\qq + \pa_\g H(\cd) : \pa_{xx} v_i\Big]\big(t, \mu, \bx, \pa_x {\d U\over \d\mu}, \pa_{\bx^0 x}{\d U\over \d\mu}, \pa_{xx} {\d U\over \d\mu},  \a^0(s, \bx^0)\big) \\
 \dis +  \int_{\dbX^d} \pa_{x_i}{\d H\over \d\mu}  \big(t, \mu, \bx, \tilde\bx, \pa_x {\d U\over \d\mu}(t, \mu,\tilde\bx),\pa_{\bx^0 x}{\d U\over \d\mu}(t, \mu, \tilde\bx), \pa_{x x}{\d U\over \d\mu}(t, \mu, \tilde\bx), \a^0(s, \tilde \bx^0)\big) \mu(d\tilde\bx)\ms\\
 \dis +\int_{\dbX^d} \!\! \Big[ \pa_z H(\cd) \cd \pa_x {\d v_i\over \d\mu}(t, \mu, \bx, \tilde \bx) + \pa_{\g^0} H(\cd) \cd \pa_{\bx^0x} {\d v_i\over \d\mu}(t, \mu, \bx, \tilde \bx) +  \pa_{\g} H(\cd) : \pa_{xx} {\d v_i\over \d\mu}(t, \mu, \bx, \tilde \bx)  \Big]\\
 \dis \qq\qq \big(t, \mu, \tilde\bx, \pa_x {\d U\over \d\mu}(t, \mu,\tilde\bx),\pa_{\bx^0 x}{\d U\over \d\mu}(t, \mu, \tilde\bx), \pa_{x x}{\d U\over \d\mu}(t, \mu, \tilde\bx), \a^0(s, \tilde \bx^0)\big) \mu(d\tilde\bx)=0,\\
 \dis v_i(T, \mu,\bx) =  G_i(\mu, \bx),
 \ea\right.
 \eea
 Applying the It\^{o} formula and by straightforward calculation, we have
\bea
\label{vol-BSDE}
\left.\ba{lll}
\dis  d  Y^{\k, \a^0, i}_t = Z^{\k,\a^0, 0, i}_t dB_t + \sum_{j=1}^d \k Z^{\k, \a^0, i, j}_t dB^j_t   \\
\dis\q - \pa_{x_i} H\Big(\L^{\k,\a^0}_t, \pa_x {\d U\over \d \mu}(\L^{\k,\a^0}_t), \pa_{\bx^0 x} {\d U\over \d \mu}(\L^{\k,\a^0}_t), \pa_{xx} {\d U\over \d \mu}(\L^{\k,\a^0}_t), \a^0_s\Big)dt,
\ea\right.
\eea
where,   
\bea
\label{vol-Z}
 \left.\ba{c}
 \dis Z^{\k,\a^0, 0, i}_t  =\pa_{\bx^0} v_i(\L^{\k,\a^0}_t)+  \pa_{x} v_i(\L^{\k,\a^0}_t) \cd \pa_{\g^0} H,\q Z^{\k,\a^0, i,j}_t =  \pa_{x_j} v_i(\L^{\k,\a^0}_t),\q 1\le i, j\le d,\ms\\
 \dis Z^{\k, \a^0, 0} := \{Z^{\k, \a^0, 0,i}\}_{1\le i\le d},\q Z^{\k, \a^0} := \{Z^{\k, \a^0, i, j}\}_{1\le i, j\le d},\q \hat Z^{\k, \a^0} := \big(Z^{\k, \a^0, 0}, Z^{\k, \a^0}\big).
 \ea\right.
\eea
This implies that
\bea
\label{vol-uZ}
v(\L^{\k,\a^0}_t) = Y^{\k,\a^0}_t,\q  \pa_{x_j} v_i(\L^{\k,\a^0}_t) = Z^{\k,\a^0, i, j}_t. 
 \eea
 Moreover,  by \reff{vol-Z} we have
 \bea
 \label{vol-q}
 \pa_{\bx^0} v_i(\L^{\k,\a^0}_t) + \sum_{j=1}^d Z^{\k,\a^0, i, j}_t \pa_{\g^0_{j}} H\Big(\L^{\k,\a^0}_t,  Y^{\k,\a^0}_t, \pa_{\bx^0} v(\L^{\k,\a^0}_t),   Z^{\k,\a^0}_t, \a^0_t\Big) = Z^{\k,\a^0,0, i}_t.
 \eea
  To obtain a representation of $\pa_{\bx^0} v_i$ from above, we impose the following assumption. 
 
 \begin{assum}
 \label{assum-vol2}
 The mapping $q\in \dbR^d\mapsto q'\in \dbR^d$ with 
 $$
 q'_i:= q_i+  \sum_{j=1}^d \g_{i, j} \pa_{\g^0_{j}} H\big(t,\mu, \bx,  z, q,  \g, a_0\big),\q z\in \dbR^d, \g\in \dbR^{d\times d},
 $$ has an inverse function $\phi$: 
 $$
 q_i = \phi_i\big(t,\mu, \bx,  z, q',  \g, a_0\big).
 $$ 
 \end{assum}
 
 \begin{rem}
 \label{rem-assumvol1}
 Note that $\g$ corresponds to $\pa_{xx} {\d U\over \d\mu}$. Under appropriate technical conditions, one can a prior show that $|\pa_{xx} {\d U\over \d\mu}|\le C_0$ for some constant $C_0>0$. Then in Assumption \ref{assum-vol2} it suffices to assume that $\phi$ exists for $|\g|\le C_0$.
 \end{rem}
 
 Under Assumption \ref{assum-vol2}, \reff{vol-q}  determines uniquely 
 \bea
 \label{vol-phi}
 \pa_{\bx^0} v_i(\L^{\k,\a^0}_t) = \phi_i(\Pi^{\k,\a^0}_t),\q \Pi^{\k,\a^0}_t:=\Big(\L^{\k,\a^0}_t,  Y^{\k,\a^0}_t, \hat Z^{\k,\a^0}_t,  \a^0_t\Big).
 \eea
 Plug this and \reff{vol-uZ} into \reff{vol-SDE} and \reff{vol-BSDE}, we obtain the following FBSDE:
\bea
\label{vol-FBSDE}
\left.\ba{lll}
\dis X^{\k, \a^0, i}_t = x_i + \int_0^t (\pa_{z_i}H)^\phi(\Pi^{\k,\a^0}_s)ds+ \int_0^t  (\pa_{\g_{0i}}H)^\phi(\Pi^{\k,\a^0}_s)dB_s + \k B^i_t,\\
\dis Y^{\k, \a^0, i}_t =  G_i(\L^{\k,\a^0}_T)+ \int_t^T (\pa_{x_i} H)^\phi_s(\Pi^{\k,\a^0}_s) ds  -\int_t^T Z^{\k,\a^0, 0,i}_s dB_s - \sum_{j=1}^n \int_t^T \k Z^{\k,\a^0,i, j}_sdB^j_s,\\
\dis \Phi^\phi(\L_t,  y, \hat \b, a_0) = \Phi(\L_t, y, \phi(\L_t, y, \hat\b, a_0), \b,  a_0), \q y\in \dbR^d, \hat\b = (\b^0, \b)\in \dbR^d\times \dbR^{d\times d}.
 \ea\right.
 \eea

\begin{prop}
 \label{prop-vol-supa2}
 Let all the conditions in Proposition \ref{prop-vol-supa} hold such that $U^{\k, \a^0}$ is sufficiently smooth with bounded derivatives, and assume further that Assumption \ref{assum-vol2} hold true. Then the above FBSDE is well-posed, and, for the $F$ in \reff{vol-a*},
 \bea
 \label{Vka0rep}
 V_\k(\a^0) = \dbE\Big[ g(\L^{\k,\a^0}_T) + \int_0^T F^\phi(\Pi^{\k,\a^0}_s, \a^0_s)ds\Big].
 \eea
 \end{prop}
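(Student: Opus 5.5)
This is a four-step scheme argument, run in the same spirit as Proposition \ref{prop-drift-supa2} (i), with the BSPDE replaced by the master equation \reff{vol-BPDE} and its differentiated form \reff{vol-BPDEi}. The plan is to show existence for the FBSDE \reff{vol-FBSDE} by construction, then uniqueness, and finally the representation \reff{Vka0rep} by It\^o's formula along the optimally controlled flow.

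\textbf{Existence.} Write $U:=U^{\k,\a^0}$ and $v_i:=\pa_{x_i}{\d U\over \d\mu}$. First solve the McKean--Vlasov SDE \reff{vol-SDE}. Its coefficients are $\pa_zH$ and $\pa_{\g^0}H$, evaluated at the bounded, Lipschitz derivatives of $U$, so they are Lipschitz in $(\mu,\bx)$. The added noise $\k B^i$ keeps the system well-behaved, so the standard fixed-point argument in $\cP_2$ gives a unique strong solution. Next define $Y$ and $Z$ through \reff{vol-Y} and \reff{vol-Z}. Apply the It\^o formula along flows of measures, in its path-dependent (Dupire) form from \cite{WZ}, to $v_i(\L^{\k,\a^0}_t)$ and use \reff{vol-BPDEi}. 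All drift terms cancel except $-\pa_{x_i}H$; the $\tilde\bx$-integral terms in \reff{vol-BPDEi} are exactly cancelled by the measure-derivative part of It\^o's formula. This yields \reff{vol-BSDE}, with terminal value $G_i(\L_T)$. By \reff{vol-q} and Assumption \ref{assum-vol2}, $\pa_{\bx^0}v_i(\L_t)=\phi_i(\Pi_t)$. Substituting this gives the triple $(X,Y,\hat Z)$ solving \reff{vol-FBSDE}.

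\textbf{Uniqueness.} This is the main obstacle. Take another solution $(X',Y',\hat Z')$ and set $\tilde Y_t:=v(t,\cL_{\bX'_t},\bX'_t)$. Apply It\^o's formula again. Because $\phi$ is the inverse map from Assumption \ref{assum-vol2}, the drift differences $\Phi^\phi(\Pi')-\Phi^\phi(\tilde\Pi)$ are Lipschitz in $(Y'-\tilde Y,\hat Z'-\hat{\tilde Z})$. Here we need $\phi$ Lipschitz, and I will assume whatever regularity of $\phi$ and $H$ the statement's ``sufficiently smooth'' permits. A standard BSDE estimate then gives $Y'=\tilde Y$, after which $X'$ solves the same forward SDE as $X$, so $X'=X$. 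One subtlety is that the law $\cL_{\bX'}$ enters $v$. This is handled by Gronwall applied jointly to $\dbE|X'-X|^2$ and $W_2(\cL_{\bX'},\cL_\bX)^2\le\dbE|\bX'-\bX|^2$.

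\textbf{Representation.} By Proposition \ref{prop-vol-supa}, $V_\k(\a^0)=U(0,\d_{(0,x)})$. Apply It\^o's formula to $U(t,\cL_{\bX_t})$ along the flow $\cL_{\bX^{\k,\a^0}}$. Using \reff{vol-BPDE} together with the optimality relations \reff{vol-a*}, the drift equals $-\dbE[F^\phi(\Pi_t,\a^0_t)]$. Integrating from $0$ to $T$ and using the terminal condition $U(T,\mu)=\int g\,d\mu$ gives \reff{Vka0rep}. Equivalently, one can note that $a^*$ attains the infimum in $H$, so the feedback control is optimal and its cost is exactly this expectation.
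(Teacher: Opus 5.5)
Your route is the paper's. Its proof is exactly the construction from the decoupling field $v=\pa_x\frac{\d U}{\d\mu}$ carried out before the statement, followed by It\^o's formula for $U^{\k,\a^0}$ along the optimally controlled flow, combined with \reff{vol-BPDE} and \reff{vol-a*}; that is your representation step. Your four-step-scheme uniqueness sketch fills in what the paper leaves implicit, and its mechanics are right. You do not even need $\phi$ Lipschitz there. Let $\si'=(\pa_{\g^0}H)^\phi(\Pi')$ be the actual volatility of $X'$ and $\tilde Z^0$ the $dB$-integrand of $v(\L'_t)$, so that $\Delta Z^0=Z'^0-\tilde Z^0$ and $\Delta Z=Z'-\pa_xv(\L'_t)$. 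The defining relation of $\phi$ and It\^o's formula for $v(\L'_t)$ then give directly $\phi(\Pi')-\pa_{\bx^0}v(\L'_t)=\Delta Z^0-\Delta Z\,\si'$.

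There is, however, one false step, which you share with the paper's display \reff{vol-BSDE}. It is not true that all $\tilde\bx$-integrals in \reff{vol-BPDEi} are cancelled by the measure part of It\^o's formula for $v_i(t,\cL_{\bX_t},\bX_t)$. That part only produces $\tilde\dbE$ of the generator of $\tilde\bX$ acting on $\frac{\d v_i}{\d\mu}(t,\cL_{\bX_t},\bX_t,\cd)$. It can therefore only cancel generator-type terms such as the last integral in \reff{vol-BPDEi}. The term $\int\pa_{x_i}\frac{\d H}{\d\mu}(\cdots)\mu(d\tilde\bx)$, which comes from the explicit $\mu$-dependence of $b,\si,f$, survives. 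Hence $Y^i_t=v_i(\L_t)$ has drift $-\pa_{x_i}H-\int\pa_{x_i}\frac{\d H}{\d\mu}(\cdots)\,\cL_{\bX_t}(d\tilde\bx)$. This is the usual mean-field term of the Pontryagin adjoint equation, and the exact analogue of the $\frac{\d g}{\d\mu}$-term that the paper does keep in the terminal value $G$ of \reff{vol-Y}.

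Consequently, when $b,\si,f$ depend on $\mu$, the triple you construct does not solve \reff{vol-FBSDE} as written. In your uniqueness comparison this term also appears as a source term that does not vanish when $(\Delta Y,\Delta\hat Z)=0$, so the BSDE estimate no longer gives $\Delta Y=0$. There are two ways to repair this:
\begin{itemize}
\item Add to the driver of $Y^i$ the integral $\int\pa_{x_i}\frac{\d H}{\d\mu}(\cdots)\mu(d\tilde\bx)$ of \reff{vol-BPDEi}, evaluated along the flow, i.e.\ $\tilde\dbE[\cdot]$ over an independent copy of $\Pi$. Then your existence, uniqueness (via a mean-field BSDE estimate) and representation steps all go through.
\item Restrict to $\mu$-independent $b,\si,f$. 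This is the situation in Subsections \ref{sect-app1} and \ref{sect-app2}, where only the terminal function depends on $\mu$.
\end{itemize}
Your representation step is correct for the triple built from $U$ in either case, since it uses only the forward flow and the identifications $Y=v(\L)$ and $\pa_{\bx^0}v=\phi(\Pi)$.
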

\proof Given the well-posedness of \reff{vol-BPDE}, from the above analysis one can easily prove the well-posedness of FBSDE \reff{vol-FBSDE}. Moreover, applying It\^{o} formula on $U^{\k,\a^0}(\L^{\k,\a^0}_t)$ and recalling \reff{vol-BPDE}, one can easily verify \reff{Vka0rep}.
\qed

To analyze $V_\k = \sup_{\a^0\in \cA} V_\k(\a^0)$, as in the previous section, we rewrite  the last equation of \reff{vol-FBSDE} in a forward form. That is, given $\a^0\in \cA$,  $y\in\dbR^d$, and $\b^0, \b\in \dbL^2(\dbF')$ taking values in $\dbR^d$ and $\dbR^{d\times d}$, respectively.  Denote the index $\th := (\a^0, y, \hat\b)$ with $\hat\b := (\b^0, \b)$, and introduce
\bea
\label{vol-forward}
\left.\ba{lll}
\dis X^{\k,\th,i}_t = x_i + \int_0^t (\pa_{z_i}H)^\phi_s(\Pi^{\k,\th}_s, \a^0_s) ds + \int_0^t (\pa_{\g^0_{i}} H)^\phi_s(\Pi^{\k,\th}_s, \a^0_s) dB_s + \k B^i_t;\\
\dis Y^{\k,\th,i}_t =  y_i- \int_0^t (\pa_{x_i} H)^\phi_s(\Pi^{\k,\th}_s, \a^0_s) ds +\int_0^t \b^{0i}_s dB_s +\sum_{j=1}^n \int_0^t \k \b^{ij}_s dB^j_s,\\
\dis J_\k(\th) := \dbE\Big[ g(\L^{\k,\th}_T)+ \int_t^T F^\phi(\Pi^{\k,\th}_s, \a^0_s)ds \Big],\\
\dis \mbox{where}\q  \L^{\k,\th}_t:=\big( t,\cL_{\bX^{k,\th}_{\cd\wedge t}}, \bX^{\k, \th}_{\cd\wedge t}\big),\q \Pi^{\k,\th}_t :=\Big(\L^{\k,\th}_t,  Y^{\k,\th}_t, \hat\b_t, \a^0_t)\Big).
 \ea\right.
 \eea
 Then, by Proposition \ref{prop-vol-supa2}, we have
 \bea
 \label{vol-Vk2}
\dis V_\k(\a^0) = \sup_{ y\in \dbR^d, \hat\b\in \dbL^2(\dbF')} J_\k(\a^0, y, \hat\b),\q\mbox{subject to}\q Y^{\k,\th}_T = G(\L^{\k,\th}_T).
 \eea
 
We now introduce the penalization to get rid of the constraint, for $\l>0$ large:
  \bea
 \label{vol-Vkl}
\dis  V_{\l,\k} = \sup_{\a^0\in \cA, y\in \dbR^d, \hat\b\in \dbL^2(\dbF')}  J_{\l, \k}( \a^0, y, \b),~\mbox{where}~ J_{\l,\k}(\th) := J_\k(\th) -\l \dbE\Big[\big|Y^{\k,\th}_T - G(\L^{\k,\th}_T)\big|^2\Big].
 \eea
 
 \begin{assum}
 \label{assum-vol3}
 (i) Assume $H$ is sufficiently smooth and the involved derivatives are bounded;
 
\no (ii) The set $\cA_0$ of $\a^0$ satisfying the requirements in Proposition \ref{prop-vol-supa2} is dense in $\cA$;

\no (iii) For any $(\l, \k)$, \reff{vol-Vkl} has ${1\over \l}$-optimal control $(\a^{0,\l,\k}, y^{\l,\k}, \hat\b^{\l,\k})$ such that $\a^{0,\l,\k}\in \cA_0$, and the corresponding $(u^{\a^{0,\l,\k}}, v^{\a^{0,\l,\k}})\in C^3(\dbF)\times C^2(\dbF)$ and  $U^{\k, \a^{0,\l}}$ have the required derivatives uniformly bounded, uniformly in $(\l,\k)$. 
 \end{assum}
 
 The following result follows the same arguments as in Theorem \ref{thm-drift-VKappalambda}.
  \begin{thm}
 \label{thm-vol-Vkl}
 Let Assumptions \ref{assum-vol1}, \ref{assum-vol2}, and \ref{assum-vol3} hold. Then $\lim_{\l\to\infty} V_{\l,\k} = V_\k$.
 \end{thm}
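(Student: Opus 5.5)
The plan is to mimic the proof of Theorem \ref{thm-drift-VKappalambda}, with \reff{vol-Vk2}, \reff{vol-Vkl} playing the roles of \reff{drift-Vkappa2}, \reff{drift-Vkappalambda}. The argument has a lower bound and an upper bound.

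\textbf{Lower bound.} First we establish $V_\k\le V_{\l,\k}$ for every $\l$. Take any $\a^0\in\cA_0$ and let $(X^{\k,\a^0},Y^{\k,\a^0},\hat Z^{\k,\a^0})$ be the solution of FBSDE \reff{vol-FBSDE} given by Proposition \ref{prop-vol-supa2}. Set $y:=Y^{\k,\a^0}_0$ and $\hat\b:=(Z^{\k,\a^0,0},Z^{\k,\a^0})$. Then the forward system \reff{vol-forward} reproduces the FBSDE solution, so the constraint $Y^{\k,\th}_T=G(\L^{\k,\th}_T)$ holds. Consequently $J_{\l,\k}(\th)=J_\k(\th)=V_\k(\a^0)$ by \reff{Vka0rep}. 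Taking the supremum over $\a^0\in\cA_0$ gives $\sup_{\a^0\in\cA_0}V_\k(\a^0)\le V_{\l,\k}$.

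It remains to pass from $\cA_0$ to $\cA$, as in Proposition \ref{prop-drift-supa2} (ii). By the density in Assumption \ref{assum-vol3} (ii), together with the boundedness and continuity in Assumption \ref{assum-vol1}, we have $\sup_{\a\in\cA'}|J_\k(\tilde\a^{0,m},\a)-J_\k(\a^0,\a)|\to0$ along an approximating sequence. The mean field term $\cL_{\bX}$ is handled by standard stability estimates for McKean--Vlasov SDEs under Lipschitz coefficients. Hence $V_\k=\sup_{\a^0\in\cA_0}V_\k(\a^0)\le V_{\l,\k}$.

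\textbf{Upper bound.} Next, fix $\l\ge1$ and take the $\frac1\l$-optimal $\th^\l=(\a^{0,\l,\k},y^{\l,\k},\hat\b^{\l,\k})$ from Assumption \ref{assum-vol3} (iii). By boundedness of $g$ and $F^\phi$ (which follows from Assumption \ref{assum-vol3} (i)) we get $V_\k-1\le J_{\l,\k}(\th^\l)\le C-\l\dbE[|Y^{\k,\th^\l}_T-G(\L^{\k,\th^\l}_T)|^2]$. Therefore the terminal mismatch satisfies $\dbE[|Y_T-G(\L_T)|^2]\le C/\l$.

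We then view $(X^{\k,\th^\l},Y^{\k,\th^\l},\hat\b^\l)$ as the solution of FBSDE \reff{vol-FBSDE} with $\a^0=\a^{0,\l,\k}$, perturbed only in its terminal condition by $\xi:=Y_T-G(\L_T)$. The key step is a stability estimate
\[
\dbE\Big[\sup_t|X^{\k,\th^\l}_t-X^{\k,\a^0}_t|^2+\sup_t|Y^{\k,\th^\l}_t-Y^{\k,\a^0}_t|^2+\int_0^T|\hat\b^\l_t-\hat Z^{\k,\a^0}_t|^2dt\Big]\le C\dbE|\xi|^2,
\]
with $C$ independent of $\l$. To obtain it, I would use the decoupling field $v=\pa_x\frac{\d U^{\k,\a^0}}{\d\mu}$ from \reff{vol-BPDEi}, as in the four step scheme. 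Set $\Delta Y_t:=Y^{\k,\th^\l}_t-v(\L^{\k,\th^\l}_t)$ and apply the It\^o formula on Wasserstein space to $v(\L^{\k,\th^\l}_t)$. This yields a linear BSDE for $\Delta Y$ whose coefficients are controlled by the derivatives of $U$ and $H$, and in which the $\phi$-inversion of Assumption \ref{assum-vol2} is used in the $dB$ coefficient. Such a BSDE gives $\dbE\sup|\Delta Y|^2\le C\dbE|\xi|^2$. Then Gronwall's inequality on the forward equation, with its law dependence, compares $X^{\k,\th^\l}$ with $X^{\k,\a^0}$.

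The uniformity of $C$ in $\l$ is exactly what Assumption \ref{assum-vol3} (iii) provides. Once the estimate holds, Lipschitz continuity of $g$ and $F^\phi$ gives $|J_\k(\th^\l)-V_\k(\a^{0,\l,\k})|\le C/\sqrt\l$, using \reff{Vka0rep}. Hence $V_{\l,\k}\le J_\k(\th^\l)+\frac1\l\le V_\k+C/\sqrt\l+\frac1\l$, and combining with the lower bound, $\lim_\l V_{\l,\k}=V_\k$.

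\textbf{Main obstacle.} I expect the stability estimate to be the hard step. It has to cope with the law dependence in $\L$ and with the fact that $\hat\b$ enters the forward drift and diffusion through $\phi$. Lipschitz continuity of $\phi$, which I would obtain from the inverse function theorem under Assumption \ref{assum-vol2} and bounded $\pa_{xx}\frac{\d U}{\d\mu}$ (Remark \ref{rem-assumvol1}), is the key point to verify.
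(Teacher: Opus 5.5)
Your proposal is correct and follows essentially the paper's route. The paper proves this result simply by repeating the argument of Theorem \ref{thm-drift-VKappalambda}. That argument has two halves: controls satisfying the constraint give $V_\k\le V_{\l,\k}$ once density of $\cA_0$ is used, and the $\frac1\l$-optimal controls of Assumption \ref{assum-vol3}(iii) give a terminal mismatch of order $C/\l$, which a $\l$-uniform FBSDE stability estimate turns into $V_{\l,\k}\le V_\k+C/\sqrt\l$. This is exactly your plan. One small correction: the uniform bound on $F^\phi$ comes from the identity $F^\phi=f(\cdot,a^*)$ in \reff{vol-a*} together with the boundedness of $f$ in Assumption \ref{assum-vol1}, as in the drift case. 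It does not follow from bounded derivatives of $H$, which alone do not control terms like $z\,\pa_z H$.
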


  Finally we compute $V_{\l,\k}$.  Recall the state space $\dbX^{2d}$ for $(B_\cd, X, Y)$. For any $t_0\in [0, T]$ and $\zeta, \eta\in \dbL^2(\cF'_{t_0}, \dbR^d)$, introduce:
 \bea
 \label{vol-U}
 \left.\ba{lll}
 \dis U_{\l,\k}(t_0, \xi, \eta) :=\sup_{\a^0\in \cA}\sup_{\hat\b\in \dbL^2(\dbF')} \dbE\Big[ g(\L^{\k,\th}_T)- \l\big|Y^{\k,\th}_T - G(\L^{\k,\th}_T)\big|^2+ \int_{t_0}^T F^\phi_s(\Pi^{\k,\th}_s)ds \Big],\\
 \dis \mbox{where}\q \th = (t_0, \zeta, \eta; \a^0, \hat\b),\q \L^{\k,\th}_t :=  \big(t,\cL_{\bX^{k,\th}_{\cd\wedge t}}, \bX^{\k, \th}_{\cd\wedge t},\big),\q \Pi^{\k, \th}_t := \Big(\L^{\k,\th}_t,  Y^{\k,\th}_t, \hat\b_t,  \a^0_t\Big),\\
 \dis  X^{\k,\th,i}_t = \zeta_i + \int_{t_0}^t (\pa_{z_i}H)^\phi_s(\Pi^{\k,\th}_s) ds + \int_{t_0}^t (\pa_{\g^0_{i}} H)^\phi_s(\Pi^{\k,\th}_s) dB_s + \k (B^i_t-B^i_{t_0});\\
\dis Y^{\k,\th,i}_t =  \eta_i- \int_{t_0}^t (\pa_{x_i} H)^\phi_s(\Pi^{\k,\th}_s) ds +\int_{t_0}^t \b^{0i}_s dB_s + \sum_{j=1}^n \int_{t_0}^t \k \b^{ij}_s dB^j_s.
 \ea\right.
 \eea
 One can easily show that $U$ is law invariant and adapted, then by abusing the notation we denote
 \bea
 \label{vol-U2}
 U_{\l,\k}(t_0, \mu) := U_{\l,\k}(t_0, \zeta, \eta),\q \mu \in \cP_2(\dbX^{2d}),  \zeta, \eta\in \dbL^2(\cF'_{t_0})~\mbox{such that}~ \cL_{(B_\cd, \zeta, \eta)} = \mu.
 \eea
 We emphasize that here we need the joint law  $\cL_{(B_\cd, \zeta, \eta)}$, rathe than  $\cL_{(\zeta, \eta)}$. 
 Moreover, one can easily verify the DPP: for any $t_0<t$,
 \bea
 \label{vol-DPP}
 U_{\l,\k}(t_0, \mu) = \sup_{\a^0\in \cA}\sup_{\hat\b\in \dbL^2(\dbF')} \Big\{ U_{\l, \k}(t, \cL_{(B_\cd, X^{\k, \th}_t, Y^{\k, \th}_t)}) + \dbE\big[\int_{t_0}^t F^\phi_s(\Pi^{\k,\th}_s)ds \big]\Big\}.
 \eea
 Then, applying the It\^{o} formula we have
 \beaa
  \pa_t U_{\l,\k}(t, \mu) + {1\over 2} \dbE\big[\pa_{\bx^0\bx^0}{\d U_{\l,\k}\over \d\mu}(\Xi_t)\big] + \sup_{\a^0\in \cA}\sup_{\hat\b\in \dbL^2(\dbF')} \dbE\big[ \cH(U_{\l,\k}, \Xi_t, \Pi_t)\big]=0,
 \eeaa
 where,  $\Xi_t := (t, \mu, B_\cd, \zeta, \eta)$, $\Pi_t := (t, \cL_{(B_\cd, \zeta)}, B_\cd, \zeta, \eta,\hat \b_t, \a^0_t)$, and
 \bea
 \label{vol-cHk}
\left.\ba{lll}
\dis \cH(U, \Xi_t, \Pi_t) :=\dbE\Big\{
\sum_{i=1}^d\Big[{1\over 2} \pa_{x_ix_i}{\d U\over \d\mu}(\Xi_t) [|(\pa_{\g^0_i} H)^\phi(\Pi_t)|^2 + \k^2] + \pa_{\bx^0 x_i}{\d U\over \d\mu}(\Xi_t) (\pa_{\g^0_i} H)^\phi(\Pi_t)\\
\dis\q  + {1\over 2} \pa_{y_iy_i}{\d U\over \d\mu}(\Xi_t) [|\b^{0i}_t|^2 + \k^2\sum_{k=1}^d |\b^{ik}_t|^2]   +\pa_{\bx^0 y_i}{\d U\over \d\mu}(\Xi_t) \b^{0i}_t\Big]  \\
\dis\q + \sum_{1\le i<j\le d} \Big[\pa_{x_ix_j}{\d U\over \d\mu}(\Xi_t) (\pa_{\g^0_i} H\pa_{\g^0_j} H)^\phi(\Pi_t)  +\pa_{y_iy_j}{\d U\over \d\mu}(\Xi_t)[\b^{0i}_t\b^{0j}_t + \k^2\sum_{k=1}^d \b^{ik}_t\b^{jk}_t]\Big]\\ 
 \dis\q+ \sum_{i, j=1}^d \pa_{x_i y_j} {\d U\over \d\mu}(\Xi_t) \big[(\pa_{\g^0_i} H)^\phi(\Pi_t)\b^{0j}_t +  \k^2 \b^{ji}_t \big]\Big]\\
 \dis\q + \sum_{i=1}^d \Big[\pa_{x_i}{\d U\over \d\mu}(\Xi_t) (\pa_{z_i} H)^\phi(\Pi_t) - \pa_{y_i}{\d U\over \d\mu}(\Xi_t) (\pa_{x_i} H)^\phi(\Pi_t)\Big]  + F^\phi(\Xi_t).
 \ea\right.
 \eea
  Note that $\a^0_t$ is $\cF_t$-measurable, and $\hat\b_t$ as well as $\zeta, \eta$ are $\cF'_t$-measurable. Then, for  deterministic $a_0, \hat\b$, for the same $\Xi_t$ and denoting $\pi_t := (t, \cL_{(B_\cd, \zeta)}, B_\cd, \zeta, \eta,\hat \b, a_0)$, we have 
 \bea
 \label{vol-HJB}
 \left.\ba{c}
\dis  \pa_t U_{\l,\k}(t, \mu) + {1\over 2} \dbE\big[\pa_{\bx^0\bx^0}{\d U_{\l,\k}\over \d\mu}(\Xi_t)\big]+  \dbE\Big[ \sup_{a_0\in \dbR} \dbE\big[\sup_{\hat\b\in \dbR^{d\times (d+1)}}\cH(U_{\l,\k}, \Xi_t, \pi) \big|\cF_t\big]\Big]=0,\ms\\
\dis U_{\l,\k}(T, \mu) = \dbE\Big[g(\cL_{(B, \zeta)}, B_\cd, \zeta)- \l \big|\eta - G(\cL_{(B, \zeta)}, B_\cd, \zeta)\big|^2\Big].
\ea\right.
 \eea
 
\begin{thm}
\label{thm-vol-leader1}
Let Assumptions \ref{assum-vol1}, \ref{assum-vol2}, and \ref{assum-vol3} hold, and the HJB equation \reff{vol-HJB} on the Wasserstein space has a classical solution $U_{\l,\k}$. Then $V_{\l,\k} = \sup_{y\in \dbR} U(0, \d_{(0, x, y)})$.
\end{thm}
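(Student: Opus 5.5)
The statement is a verification result, and I would prove it in two steps. First, $V_{\l,\k}$ in \reff{vol-Vkl} is a supremum over $(\a^0, y, \hat\b)$, and the initial value $y$ is deterministic. So I can split off the finite-dimensional optimization in $y$, exactly as in \reff{drift-Vly}:
\beaa
V_{\l,\k} = \sup_{y\in\dbR^d} V_{\l,\k}(y),\q V_{\l,\k}(y) := \sup_{\a^0\in\cA,\,\hat\b\in\dbL^2(\dbF')} J_{\l,\k}(\a^0, y, \hat\b).
\eeaa
Comparing \reff{vol-forward}, \reff{vol-Vkl} with \reff{vol-U} at $t_0=0$, $\zeta = x$, $\eta = y$, one reads off $V_{\l,\k}(y) = U_{\l,\k}(0, x, y)$. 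The law of $(B_\cd, \zeta,\eta)$ at time $0$ is $\d_{(0,x,y)}$, so by \reff{vol-U2} this equals $U_{\l,\k}(0,\d_{(0,x,y)})$. It therefore remains to identify the value function $U_{\l,\k}$ of \reff{vol-U} with the classical solution $U$ of \reff{vol-HJB}.

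For that identification I would run the standard verification argument on the Wasserstein space, with path-dependent (Dupire) derivatives in $\bx^0$.

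\textbf{Upper bound.} Fix an arbitrary $(\a^0,\hat\b)$ and the corresponding state $\bX^{\k,\th}$, $Y^{\k,\th}$ from \reff{vol-U}. Apply the Itô formula for functions of the flow of joint laws $\cL_{(B_{\cd\wedge t}, X^{\k,\th}_t, Y^{\k,\th}_t)}$; this is the chain rule used to derive \reff{vol-HJB}, with ${\d\over\d\mu}$ and its $\bx^0, x, y$ derivatives. This gives
\beaa
U(T,\mu_T) - U(0,\mu_0) = \int_0^T \Big[\pa_t U + {1\over 2}\dbE\big[\pa_{\bx^0\bx^0}{\d U\over\d\mu}\big] + \dbE\big[\cH(U,\Xi_t,\Pi_t)\big] - \dbE\big[F^\phi(\Pi_t)\big]\Big]dt .
\eeaa
The Hamiltonian term is dominated pointwise. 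Since $\hat\b_t$ is $\cF'_t$-measurable, $\cH(U,\Xi_t,\Pi_t) \le \sup_{\hat\b}\cH(U,\Xi_t,\pi)$ evaluated at $a_0=\a^0_t$. Since $\a^0_t$ is $\cF_t$-measurable, conditioning on $\cF_t$ and using the tower property bounds $\dbE[\cH]$ by $\dbE[\sup_{a_0}\dbE[\sup_{\hat\b}\cH\,|\,\cF_t]]$. The equation \reff{vol-HJB} then makes the integrand $\le -\dbE[F^\phi(\Pi_t)]$. Using the terminal condition of \reff{vol-HJB}, we get $J_{\l,\k}(\a^0,y,\hat\b)\le U(0,\d_{(0,x,y)})$.

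\textbf{Lower bound.} For $\e>0$, pick measurable $\e$-maximizers of the Hamiltonian: $a_0^\e(t,\mu,\bx^0)$, depending only on the $B$-path, and $\hat\b^\e(t,\mu,\bx^0,x,y,a_0)$. Feeding them into \reff{vol-U} in feedback form produces admissible open-loop controls, because $\a^0$ only uses $B$ and $\hat\b$ may use everything in $\dbF'$. The same Itô computation then gives $J\ge U(0,\cdot)-C\e$.

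The main obstacle is this lower-bound step. One must solve the McKean–Vlasov forward system \reff{vol-U} under the feedback controls, which need not be Lipschitz. The $\e$-selection must also be made measurably, with $a_0^\e$ depending only on $(t,\mu,\bx^0)$; this reflects the conditional expectation $\dbE[\,\cdot\,|\cF_t]$ inside the sup in \reff{vol-HJB}. In addition, $\hat\b$ is unbounded, so the Itô formula needs integrability. I would handle these by piecewise-constant approximations of the controls on a time grid, using the uniform bounds on the derivatives of $U$ and the regularity in Assumption \ref{assum-vol3}(i) to pass to the limit. An alternative is to bypass verification entirely: use the DPP \reff{vol-DPP}, apply Itô over $[t_0,t]$, and let $t\downarrow t_0$. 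Either route gives $U_{\l,\k}=U$, and hence the claimed $V_{\l,\k}=\sup_y U(0,\d_{(0,x,y)})$.
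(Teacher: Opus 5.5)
Your argument is correct, but it runs in the opposite direction from the paper's. The paper gives no separate proof. It defines $U_{\l,\k}$ as the value function \reff{vol-U}--\reff{vol-U2}, asserts the DPP \reff{vol-DPP}, and derives \reff{vol-HJB} from it by It\^o's formula. The identity $V_{\l,\k}=\sup_y U_{\l,\k}(0,\d_{(0,x,y)})$ is then read off from the definition after splitting off $y$. The classical solution is tacitly identified with the value function, since both are called $U_{\l,\k}$. That route only shows that the value function, if smooth, solves \reff{vol-HJB}.

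Your verification argument instead starts from an arbitrary classical solution and needs no DPP. The upper bound takes the pointwise sup in $\hat\b$, then conditions on $\cF_t$ using that $\a^0_t$ is $\cF_t$-measurable. This is exactly the nested $\dbE[\sup_{a_0}\dbE[\sup_{\hat\b}\cH|\cF_t]]$ structure of \reff{vol-HJB}. The lower bound uses $\e$-maximizers with $a_0^\e$ depending only on $(t,\mu,\bx^0)$. Together the two bounds identify any classical solution with the value function, which is the uniqueness statement the paper leaves implicit.

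The cost is the lower bound, where all the technical work sits. Freezing the $\e$-maximizers on a time grid is the right device: the controls on $[t_k,t_{k+1})$ are then $\cF_{t_k}$-, resp. $\cF'_{t_k}$-measurable, and no feedback fixed point arises. However, the sup in $\hat\b$ is over an unbounded set. You will therefore also need to truncate $\hat\b$ and let the truncation level go to infinity, both to keep $\hat\b\in\dbL^2(\dbF')$ and to control the error on each subinterval.

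One correction: your closing claim that the DPP route ``also gives $U_{\l,\k}=U$'' is not right by itself. Applying It\^o's formula to \reff{vol-DPP} and letting $t\downarrow t_0$ shows only that the value function satisfies \reff{vol-HJB}, assuming it is smooth. Concluding that it coincides with a given classical solution requires a uniqueness or comparison result for \reff{vol-HJB}, which is exactly what your verification argument provides.
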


\begin{rem}
\label{rem-vol-nonMF}
When $b, \si, f, g$ are independent of $\mu$, then so is $H$ in \reff{vol-H}. In this case, one can easily show that the $U^{\k,\a^0}$ in Proposition \ref{prop-vol-supa} takes the form: $U^{\k, \a^0}(t, \mu) = \int_{\dbX^d} u^{\k, \a^0}(t, \bx) \mu(d\bx)$, namely $u^{\k, \a^0}(t, \bx) := {\d U^{\k, \a^0}\over \d \mu}(t, \mu, \bx)$ is independent of $\mu$. Moreover, 
\beaa
u_t(x) := u^{\k, \a^0}(t, B_\cd, x),\q v_t(x) := \pa_{\bx^0} u^{\k, \a^0}(t, B_\cd, x), 
\eeaa
 satisfy the following BSPDE:
 \bea
 \label{vol-BSPDE}
 \left.\ba{lll}
 \dis u_t(x) = g(B_\cd, x) + \int_t^T H(s, B_\cd, x, \pa_x u_s(x),  \pa_x v_s(x), \pa_{xx} u_s(x),  , \a^0_s)\big] ds - \int_t^T v_s(x)dB_s.
 \ea\right.
 \eea
However, due to the different measurability of $\a^0$ and $\hat\b$, HJB equation \reff{vol-HJB} cannot be reduced to a BSPDE as in Theorem \ref{thm-drift-leader1}.  In particular, ${\d U_{\l,\k}\over \d\mu} (t, \mu, \bx^0, x, y)$ depends on $\mu$.
\end{rem}  
  
\subsection{Application to \reff{Vlambda-2} for the Stackelberg game}
\label{sect-app1}
We can formally apply the results in this section to compute $V_\l$ in \reff{Vlambda-2} for the original Stackelberg game problem. While $\a^0$ remains the same, the $\a$ here corresponds to $(\a,\b)$ in  \reff{Vlambda-2}. Note that the $X$ there is the Brownian motion $B$, see \reff{X}, so we have $d=n+1$ with state processes $(\cX^{\hat\a, \b, 1}, \cds, \cX^{\hat\a, \b, n}, M^{\hat\a}_t)$. For $\mu\in \cP_2(\dbX^{n+1})$ and $x=(x_1,\cds, x_n, x_{n+1})$, the terminal condition takes the form:
\beaa
g(\mu, \bx^0, x) =  x_{n+1} g_0(\bx^0_T)  + \l \sum_{i=1}^n |x_i - g_i(\bx^0_T)|^2 - \l \sum_{i=1}^n \Big(\int_{\dbX^{n+1}} [\tilde x_i - g_i(\tilde \bx^0_T)] \mu(d\tilde \bx)\Big)^2. 
\eeaa
Then we can  easily derive the HJB equation \reff{vol-HJB} for this system. We note that \reff{vol-HJB}  involves additional controls $\hat\b$ and additional parameters $\l$ (and $\k$), which are different from the $\b$ and $\l$ already involved in \reff{Vlambda-2}.

We shall emphasize though that this derivation is just heuristic. The results in this section require very strong technical conditions, which are unlikely satisfied by the system for \reff{Vlambda-2}. In particular, the $\b$ in \reff{cXi} is unbounded, implying that the $\si$ in \reff{vol-V0} is unbounded. So some serious efforts, including possibly additional approximations, are needed in order to have a rigorous solution to \reff{Vlambda-2}. In this paper we content ourselves with the analysis of the structure of the problem, and we leave the technical challenges to future research.
 
\subsection{Application to \reff{PA-Vn-2} for the principal-agent problem}
\label{sect-app2}

We now turn to \reff{PA-Vn-2}, again with heuristic analysis only. Since both \reff{PA-Vn} and \reff{vol-Vkl} involve a parameter $\l$, we use $\l_1$ for \reff{PA-Vn} and $\l_2$ for \reff{vol-Vkl}. Recall \reff{PA-Vn-2}. For fixed  $\xi\in \dbL^2(\cF_T)$, denote
\bea
\label{PA-Vn-3}
\left.\ba{lll}
\dis V_{\l_1}(R,\xi) := \sup_{\a^0\in \cA}\inf_{(\a, \b)\in \cA^n\times (\dbL^2(\dbF))^n}\Big\{  \l \sum_{i=1}^n \phi\big(R_i, \dbE[g_i(\xi_i)-\cX^{\hat\a, \b, i}_T]\big)  \\
\dis \q + \dbE\Big[M^{\hat\a}_T g_0(X_T-\sum_{i=1}^n \xi_i)+\l \sum_{i=1}^n \big|\cX^{\hat\a, \b, i}_T - g_i(\xi_i)\big|^2  \\
\dis \q + \int_0^T \big[M^{\hat\a}_s f_0(s, X_s,\hat \a_s) ds+ \l \sum_{i=1}^n  |\ol F_i(s, X_s, \hat\a_s, \b^{ i}_s) - F_i(s, X_s, \hat\a_s, \b^{i}_s)|^{3\over 2}\big]ds \Big]\Big\}.
\ea\right.
\eea
As in Subsection  \ref{sect-app1}, this is problem \reff{vol-V0} with $d=n+1$, with appropriate coefficients $b, \si, f$ and terminal condition, for $\mu\in \cP_2(\dbX^{n+1})$ and $\bx = (\bx^0, x_1, \cds, x_n, x_{n+1})$, 
\bea
\label{gxi}
\left.\ba{c}
\dis g^{\l_1,\xi}(\mu, \bx) = x_{n+1} g_0(\bx^0_T - \sum_{i=1}^n \xi_i(\bx^0_\cd)) + \l_1\sum_{i=1}^n |x_i - g_i(\xi_i(\bx^0_\cd))|^2 \\
\dis\qq + \l_1 \sum_{i=1}^n  \phi\Big(R_i, \int_{\dbX^{n+1}} [g_i(\xi_i(\tilde\bx^0_\cd))- \tilde x_i] \mu(d\tilde \bx)\Big).
\ea\right.
\eea 
Recall \reff{vol-Y}. This implies that, denoting by $\phi'$ the derivative of $\phi$ with respect to its second argument, 
\bea
\label{Gxi}
\left.\ba{c}
\dis G^{\l_1,\xi}_i(\mu, \bx) = 2 \l_1 [x_i - g_i(\xi_i(\bx^0_\cd)] - \l_1 \phi'\Big(R_i, \int_{\dbX^{n+1}} [g_i(\xi_i(\tilde\bx^0_\cd))- \tilde x_i] \mu(d\tilde \bx)\Big), ~ i=1,\cds, n;\\
\dis G^{\l_1,\xi}_{n+1}(\mu, \bx) = g_0(\bx^0_T - \sum_{i=1}^n \xi_i(\bx^0_\cd)).
\ea\right.
\eea 

From the results of this section, $V_{\l_1}(\xi)$ can be approximated by 
\bea
\label{Vlkxi}
V_{\l_1, \l_2, \k}(\xi) = \sup_{y\in \dbR^{n+1}}U_{\l_2, \k}^{\l_1, \xi}\big(0, \d_{(0, x^0, y)}\big),\q x^0 := (0,\cds, 0, 1),
\eea
 where $U_{\l_2, \k}^{\l_1, \xi}(t, \mu)$, $(t, \mu) \in [0, T]\times \cP_2(\dbX^{2(n+1)})$ satisfies the HJB equation \reff{vol-HJB} with appropriate $\cH$ and the following terminal condition: recalling \reff{vol-HJB}, \reff{gxi}, and \reff{Gxi}, and letting $\mu_1= \cL_{(B_\cd, \zeta)}$ denote the first marginal distribution of $\mu = \cL_{(B_\cd, \zeta, \eta)}$,  \bea
 \label{Ulkxi-terminal}
 U_{\l_2, \k}^{\l_1, \xi}(T, \mu) = \cG_{\l_2, \k}^{\l_1, \xi}(\mu) := \dbE\Big[g^{\l_1,\xi}(\mu_1, B_\cd, \zeta) - \l_2  |\eta -  G^{\l_1,\xi}(\mu_1, B_\cd, \zeta)|^2\Big].
 \eea
 Note that $V_{\l_1} = \sup_{\xi\in \dbL^2(\cF_T)} V_{\l_1}(\xi)$, which can be approximated by 
 \bea
 \label{olVlk}
 \left.\ba{c}
 \dis V^{\l_1}_{\l_2,\k} := \sup_{\xi\in \dbL^2(\cF_T)} V^{\l_1}_{\l_2, \k}(\xi) = \sup_{y\in \dbR^{n+1}} \ol U_{\l_2, \k}^{\l_1}\big(0, \d_{(0, x^0, y)}\big),\q  \ol U_{\l_2, \k}^{\l_1} := \sup_{\xi\in \dbL^2(\cF_T)} U_{\l_2, \k}^{\l_1, \xi}.
 \ea\right.
 \eea
 Denote
 \bea
 \label{olcG}
\dis \ol\cG_{\l_2, \k}^{\l_1}(\mu) := \sup_{\xi\in \dbL^2(\cF_T)}\cG_{\l_2, \k}^{\l_1, \xi}(\mu)  = \sup_{\xi\in \dbL^2(\cF_T)} \dbE\Big[g^{\l_1,\xi}(\mu_1, B_\cd, \zeta) - \l_2  |\eta -  G^{\l_1,\xi}(\mu_1, B_\cd, \zeta)|^2\Big].
\eea
Note that the Hamiltonian $\cH$ for the equation of $U_{\l_2, \k}^{\l_1, \xi}(t, \mu)$ is independent of $\xi$. Then, by the comparison principle of HJB equations on Wasserstein space (cf. \cite{ZTZ}), we see that $\ol U_{\l_2, \k}^{\l_1}$ satisfies HJB equation \reff{vol-HJB} with the same $\cH$ and terminal condition $\ol\cG_{\l_2, \k}^{\l_1}$. That is, by solving this HJB equation, we obtain $\dis V^{\l_1}_{\l_2,\k} = \sup_{y\in \dbR^{n+1}} \ol U_{\l_2, \k}^{\l_1}\big(0, \d_{(0, x^0, y)}\big)$, which is an approximate value of $V^{\l_1}$. 

\begin{rem}
\label{rem-optimalxi}
We note again that we are ignoring the strong technical conditions. However, the constructions $\phi$ in \reff{PA-miny},  $g^{\l_1,\xi}$ in \reff{gxi}, $G^{\l_1, \xi}$ in \reff{Gxi}, $\cG_{\l_2, \k}^{\l_1, \xi}$ in \reff{Ulkxi-terminal} are all explicit, with essentially no technical conditions, and then \reff{olcG} provides us a maximizer for $\xi$. We conjecture that this  constructed $\xi$ would indeed be an approximate optimal contract for much more general principal-agent problems violating our technical requirements.  
\end{rem}

\section{Some examples}
\label{sect-eg}
\setcounter{equation}{0}

In this section we present a few examples to illustrate some points mentioned in the paper. The first example concerns the difference between \reff{leader-sup} and  \reff{leader-inf}. For deterministic one period games, we use $\hat a = (a_0, a)=(a_0, a_1, \cds, a_n)$ to denote the controls taking values in the control set $A$.
\begin{eg}
\label{eg-supsup}
Consider a deterministic one period zero sum game with $n=2$:
\bea
\label{eg1-zerosum}
J_0 = - [J_1 + J_2].
\eea
  All players have action space $A=\{0,1\}$, and the payoff $J(\hat a) = (J_1(\hat a), J_2(\hat a))$ is as follows:
  
  \begin{table}[h]
  \begin{center}
    \begin{tabular}{|l|c|c|c|} 
     \hline
      $J(\hat a)$ & $a_2 = 0$ & $a_2 = 1$  \\
      \hline
      $a_1=0$ & $(2,2)$ & $(0,0)$  \\
      \hline
      $a_1=1$ & $(0,0)$ & $(3,3)$  \\
      \hline
    \end{tabular}\qq \begin{tabular}{|l|c|c|c|} 
     \hline
       $J(\hat a)$ & $a_2 = 0$ & $a_2 = 1$  \\
      \hline
      $a_1=0$ & $(1,1)$ & $(0,0)$  \\
      \hline
      $a_1=1$ & $(0,0)$ & $(100,100)$  \\
      \hline
    \end{tabular}
     \caption{Left: $a_0=0$,\q Right: $a_0=1$}
  \end{center}
\end{table}

\no Clearly, for both $a_0=0$ and $a_0=1$, the followers have two equilibria: 
$$\cE(0) = \cE(1) = \{(0,0), (1,1)\}.
$$
Recall \reff{eg1-zerosum}, one can easily compute that:
\beaa
&&\dis \ol v_0(0) := \sup_{a^*\in \cE(0)} J_0(0, a^*) = \max\big(J_0(0,0,0), J_0(0,1,1)\big)=\max(-4, -6) = -4;\\
&&\dis \ol v_0(1) := \sup_{a^*\in \cE(1)} J_0(1, a^*) = \max\big(J_0(1,0,0), J_0(1,1,1)\big)=\max(-2, -200) = -2;\\
&&\dis \ol v_0 = \max\big(\ol v_0(0), \ol v_0(1)\big) = \max(-4, -2) = -2,\q\mbox{with $a_0^* = 1$};\\
&&\dis \ul v_0(0) := \inf_{a^*\in \cE(0)} J_0(0, a^*) = \min\big(J_0(0,0,0), J_0(0,1,1)\big)=\min(-4, -6) = -6;\\
&&\dis \ul v_0(1) := \inf_{a^*\in \cE(1)} J_0(1, a^*) = \min\big(J_0(1,0,0), J_0(1,1,1)\big)=\min(-2, -200) = -200;\\
&&\dis \ul v_0 = \max\big(\ul v_0(0), \ul v_0(1)\big) = \max(-6, -200) = -6,\q\mbox{with $\a_0^* = 0$}.
\eeaa
 If we go with the problem \reff{leader-sup}, then the leader will choose $a_0 = 1$, by expecting the followers will select the equilibrium $a = (0,0)$. However, in this case $J(1, 0, 0) = (1,1)$ and $J(1,1,1)= (100, 100)$, obviously the followers would select the equilibrium $a^*=(1,1)$, and then the leader will actually end up with utility $J_0(1, 1,1) = -200$, which is much worse than $\ul v_0 = -6$, the value the leader can lock in if she chooses $a_0=0$. 
\end{eg}

The next result concerns the difference between \reff{leader-inf} and \reff{leader}.

\begin{eg}
\label{eg-Nashe}
Again consider a deterministic one period game with one leader and two followers. Let $A=[0, \infty)$, and for $\hat a=(a_0, a_1, a_2)\in A^3$,
\beaa
J_1(\hat a) = J_2(\hat a) = {a_1 a_2\over (1+a_1)(1+a_2)},\q J_0(\hat a) = -|a_0|^2 - 2\phi\big({a_1a_2\over (1+a_1)(1+a_2)}\big) a_0 + 4a_0,
\eeaa
where $\phi(x) := x\1_{[0, 1]}(x) + {1\over x}\1_{(1,\infty)}$, which is increasing on $[0,1]$ and decreasing on $[1,\infty)$.  Then
\bea
\label{Nashe-inequality}
V_0  \le 1 < 4 = \ul v_0.
\eea
\end{eg}

\proof First, one can easily see that $\cE(a_0) = \{(0,0)\}$ for any $a_0\in A$. Then
\beaa
\ul v_0 = \sup_{a_0\in A} J_0(a_0, 0, 0) = \sup_{a_0\in A} (-|a_0|^2 + 4a_0) = 4.
\eeaa

On the other hand, for any $\e>0$ small and any $a_0\in A$, we can easily verify that 
\beaa
(a^\e_1, a^\e_2)\in \cE_\e(a_0), \q\mbox{where}\q a^\e_1=a^\e_2 := {1-\e\over \e}.
\eeaa
Then
\beaa
&&\sup_{a\in \cE(a_0)} \phi\big({a_1a_2\over (1+a_1)(1+a_2)}\big) \ge \phi\Big({a^\e_1a^\e_2\over (1+a^\e_1)(1+a^\e_2)}\Big) = {1\over (1-\e)^2};\\
&&\dis v_\e(a_0) = \inf_{a\in \cE(a_0)} J_0(a_0, a) \le -|a_0|^2 - {2a_0\over (1-\e)^2} + 4a_0;\\
&&\dis V_\e := \sup_{a_0\in A} v_\e(a_0)  \le \sup_{a_0\ge 0} \big[ -|a_0|^2 - {2a_0\over (1-\e)^2} + 4a_0\big] = \big(2-{1\over (1-\e)^2}\big)^2.
\eeaa
This implies \reff{Nashe-inequality} immediately.
\qed

We now present an example in the setting of Section \ref{sect-stackelberg} which we can solve explicitly.
\begin{eg}
\label{eg-explicit}
 Set $n=2$, $A=[0,2]$, and 
 \bea
 \label{explicit-coefficients}
 \left.\ba{c}
  \dis b\equiv 0, \q  \sigma \equiv 1, \q g_i(x) := x^2, ~ i=0,1,2;\\
 \dis f_i(t,x,{\hat{a}})=-|a_1-a_2|^{4\over 3},  i=1,2;\q   f_0(t,x,\hat{a})=a_0 +|a_1|^2+|a_2|^2 -2a_1 a_0.
 \ea\right.
 \eea
 Then
 \beaa
 V_0 = {3\over 2} T,\q V_\l = {3\over 2}T -{4\l + 3\over 8(4\l^2 + 3\l +1)}T.
 \eeaa
 In particular, we have $\lim_{\l\to \infty} V_\l = V_0$.
\end{eg}
\proof By \reff{explicit-coefficients}, it is clear that $\dbP^{\hat \a} = \dbP$ and 
\beaa
F_i(t, x, \hat a, z_i)= - |a_1-a_2|^{4\over 3},\q  \ol F_i(t, x, \hat a, z_i) \equiv 0.
\eeaa
One can easily show that $\a\in \cE(\a^0)$ if and only if $\a^1=\a^2$, and for $\e>0$,  $\a\in \cE_\e(\a^0)$ if and only if $\dbE\big[\int_0^T |\a^1_t - \a^2_t|^{4\over 3}dt\big] \le \e$. Note that
\beaa
v_\e(\a^0) = \inf_{\a\in \cE_\e(\a^0)} \dbE\Big[|B_T|^2+\int_0^T \big[\a^0_t +|\a^1_t|^2+|\a^2_t|^2-2\a^1_t\a^0_t \big]dt\Big],
\eeaa
and that, since $|\a^1|, |\a^2|\le 2$,
\beaa
&&\dis \Big|\int_0^T |\a^2_t|^2 dt - \int_0^T |\a^1_t|^2 dt\Big| \le \sqrt{\e}\int_0^T |\a^1_t|^2 dt + (1+{1\over \sqrt{\e}}) \int_0^T (\a^2_t-\a^1_t)^2 dt \\
&&\dis \le  \sqrt{\e}\int_0^T |\a^1_t|^2 dt  + c(\e+\sqrt{\e}),\q\mbox{where}\q c:= 2^{2\over 3}.
\eeaa
Then, one can easily see that 
\bea
\label{explicit-est1}
v_\e(\a^0) \le v_\e^{1+\sqrt{\e}}(\a^0) +  c(\e+\sqrt{\e}),\q v_\e(\a^0) \ge v_\e^{1-\sqrt{\e}}(\a^0) -  c(\e+\sqrt{\e}),
\eea
where, for $0<\th <2$, 
\beaa
v^\th_\e(\a^0) &:=&  \inf_{\a^1\in \cA} \dbE\Big[|B_T|^2+\int_0^T \big[\a^0_t +|\a^1_t|^2+\th |\a^1_t|^2-2\a^1_t\a^0_t  \big]dt\Big]\\
&=& T + \inf_{\a^1\in \cA} \dbE\Big[\int_0^T \big[(1+\th)|\a^1_t|^2 - 2 \a^0_t \a^1_t + \alpha^0_t \big]dt\Big]\\
&=& T + \dbE\Big[\int_0^T \big[\alpha^0_t - {1\over 1+\th}|\a^0_t|^2  \big]dt\Big],
\eeaa
where $\a^{1*}_t = {\a^0_t\over 1+\th}\in A$ whenever $\a^0_t\in A$. 
Denote
\beaa
V_\e^\th := \sup_{\a^0\in \cA} v^\th_\e(\a^0)= T + \sup_{\a^0\in \cA}\dbE\Big[\int_0^T \big[\alpha^0_t  - {1\over 1+\th}|\a^0_t|^2\big]dt\Big] = {5+\th\over 4}T,
\eeaa
where $\a^{0*}_t = {1+\th\over 2}\in A$.
Then, by \reff{explicit-est1}, we have
\bea
\label{explicit-est2}
V_\e \le  {6+\sqrt{\e}\over 4}T+  c(\e+\sqrt{\e}),\q V_\e \ge {6-\sqrt{\e}\over 4}T -  c(\e+\sqrt{\e}),
\eea
This clearly implies that $V_0=\lim_{\e\to 0} V_\e = {3\over 2} T$.

It remains to compute  $V_{\lambda}$.  Note that 
\beaa
\cX^{\hat\a, \b, i}_t =  \int_0^t  \b^{i}_sdB_s,\q
 I(\hat\a, y,\b) := \sum_{i=1}^2 \dbE\Big[ \int_0^T |\a_s^1-\a_s^2|^2 ds + \big|y_i + \int_0^T  \b^{i}_sdB_s - |B_T|^2\big|^2\Big]. 
\eeaa
Then
\beaa
 &&\dis v_\l(\a^0) := \inf_{\a\in \cA^2, y\in \dbR^2, \b\in (\dbL^2(\dbF))^2}  \Big[J_0(\a^0, \a) + \l  I(\a^0, \a, y, \b)\Big]\\
 &&\dis = \inf_{\a^1,\a^2\in \cA, y\in \dbR^2, \b\in (\dbL^2(\dbF))^2}\dbE\Big[|B_T|^2+\int_0^T \big[\a^0_t +|\a^1_t|^2+ |\a^2_t|^2-2\a^1_t\a^0_t  +2\lambda |\a_t^1-\a_t^2|^2\big]dt\\
 &&\qq\qq +\lambda \sum_{i=1}^2 \big|y_i + \int_0^T  \b^{i}_tdB_t -|B_T|^2\big|^2\Big]\\
 &&\dis =T+ \dbE\Big[\int_0^T [\a^0_t- {8\l^2+6\l +2\over (1+4\l)^2}|\a^0_t|^2 ] dt\Big],
\eeaa
where $a^{1*}_t = {1+2\l\over 1+4\l} \a^0_t \in A$, $\a^{2*}_t = {2\l\over 1+4\l} \a^0_t \in A$, whenever $\a^0_t\in A$. Then
\beaa
V_\l := \sup_{\a^0\in \cA} v_\l(\a^0) =  T + {(1+4\l)^2\over 8(4\l^2 + 3\l +1)}T = {3\over 2}T -{4\l + 3\over 8(4\l^2 + 3\l +1)}T,
\eeaa
where $\a^{0*}_t = {(1+4\l)^2 \over 4(4\l^2 + 3\l +1)} \in A$.
\qed

Finally we present a deterministic zero sum game problem with open loop controls, following the idea of \cite[Example 2.1, Remark 2.2]{PTZ}. We shall let $X$ be two dimensional, which does not change the story of Sections \ref{sect-drift} and \ref{sect-vol}. 
\begin{eg}
\label{eg-zerosum}
 Set $A=[0,1]$, and for $\hat\a=(\a^0, \a)$, 
 \bea
 \label{zerosum-model}
 X^{\a^0, 1}_t = \int_0^t \a^0_s ds,~ X^{\a,2}_t = \int_0^t \a_s ds, \q V_0 := \sup_{\a^0\in \cA} \inf_{\a\in \cA} J(\a^0, \a),\q J(\hat\a) := -|X^{\a^0, 1}_T-X^{\a,2}_T|^2.
 \eea
 Then the problem is time inconsistent, in particular, it is different from the solution to the Bellman-Isaacs equation. 
\end{eg}
\proof We first compute $V_0$. Given $\a^0$, noting that  $\{X^{\a,2}_T, \a\in \cA\} = [0, T]$, we have
\beaa
V_0(\a^0) = - \sup_{\a\in \cA} \Big|X^{\a,2}_T- X^{\a^0, 1}_T\Big|^2 = - \Big(X^{\a^0, 1}_T \vee (T-X^{\a^0, 1}_T)\Big)^2,
\eeaa
Thus
\beaa
V_0 = -\inf_{\a^0\in \cA}  \Big(X^{\a^0, 1}_T \vee (T-X^{\a^0, 1}_T)\Big)^2 = -{T^2\over 4},
\eeaa
with optimal $\a^{0*}_t \equiv  {1\over 2}$. Moreover, $V_0(\a^{0*})$ has two optimal arguments: $\a^*\equiv 0$ or $\a^*\equiv 1$.

We next compute $\ol V_0 := \inf_{\a^0\in \cA}\sup_{\a\in \cA} J(\a^0, \a)$. In this case,
\beaa
\ol V(\a^0) = - \inf_{\a\in \cA} {\Big|}X^{\a,2}_T- X^{\a^0, 1}_T\Big|^2 =0,
\eeaa 
with optimal argument $\a^* = \a^0$. Then clearly $\ol V_0 = 0 > V_0$, and any $\a^0$ is optimal.

Moreover, in this case the Isaacs condition holds, with Hamiltonian:
\beaa
H(z_1, z_2) = \sup_{a_0\in A}\inf_{a\in A} [a_0 z_1 + a z_2] =\inf_{a\in A} \sup_{a_0\in A} [a_0 z_1 + a z_2] = z_1^+ - z_2^-.
\eeaa
Then the Bellman-Isaacs equation is:
\beaa
\pa_t u(t, x_1, x_2) + (\pa_{x_1}u)^+ - (\pa_{x_2} u)^- =0,\q u(T,x_1, x_2) = - (x_1-x_2)^2.
\eeaa
One can easily show that $u(t, x_1, x_2) = -(x_1-x_2)^2$ is the unique classical solution to the PDE. In particular, $V_0 = -{T^2\over 4} \neq 0 = u(0, 0, 0)$. 

We finally explain that the natural dynamic extension of the problem \reff{zerosum-model} is time inconsistent. Indeed, fix $t_0:= {2T\over 3}$. Following the optimal control $\a^{0^*}_t = {1\over 2}$ and $\a^*_t=0$ on $[0, t_0]$, we get $X^1_{t_0} = {T\over 3}$ and $X^2_{t_0} = 0$. Now consider the following problem:
\bea
 \label{zerosum-model2}
 \left.\ba{c}
\dis X^{t_0, \a^0, 1}_t = {T\over 3} + \int_{t_0}^t \a^0_s ds,\q X^{t_0, \a,2}_t = \int_{t_0}^t \a_s ds, \\
\dis V_0(t_0) := \sup_{\a^0\in \cA} \inf_{\a\in \cA} J(t_0, \a^0, \a),\q J(t_0, \hat\a) := -|X^{t_0,\a^0, 1}_T-X^{t_0,\a,2}_T|^2.
\ea\right.
 \eea
 Given $\a^0$, since  $\{X^{t_0, \a,2}_T, \a\in \cA\} = [0, {T\over 3}]$ and $X^{t_0, \a^0, 1}_T\ge {T\over 3}$, we have
\beaa
V_0(t_0, \a^0) := - \sup_{\a\in \cA} \Big|X^{t_0,\a,2}_T- X^{t_0, \a^0, 1}_T\Big|^2 = - \Big(X^{t_0, \a^0, 1}_T \vee |{T\over 3}-X^{t_0,\a^0, 1}_T|\Big)^2 = -|X^{t_0, \a^0, 1}_T|^2.
\eeaa
Then
\beaa
V_0 = -\inf_{\a^0\in \cA} |X^{t_0,\a^0, 1}_T |^2 = -({T\over 3})^2,
\eeaa
with optimal control $\a^{t_0, 0*} \equiv  0$. This is different from $\a^{0*}\equiv {1\over 2}$,  and thus the problem is time inconsistent.

We note that the problem is still time inconsistent if we follow the other optimal control $\a^{0^*}_t = {1\over 2}$ and $\a^*_t=1$ on $[0, t_0]$, we get $X^1_{t_0} = {T\over 3}$ and $X^2_{t_0} = {2T\over 3}$. Now by using the same notations and  consider the following problem:
\bea
 \label{zerosum-model3}
 \left.\ba{c}
\dis X^{t_0, \a^0, 1}_t = {T\over 3} + \int_{t_0}^t \a^0_s ds,\q X^{t_0, \a,2}_t = {2T\over 3}+\int_{t_0}^t \a_s ds, \\
\dis V_0(t_0) := \sup_{\a^0\in \cA} \inf_{\a\in \cA} J(t_0, \a^0, \a),\q J(t_0, \hat\a) := -|X^{t_0,\a^0, 1}_T-X^{t_0,\a,2}_T|^2.
\ea\right.
 \eea
 Given $\a^0$, since  $\{X^{t_0, \a,2}_T, \a\in \cA\} = [{2T\over 3},T]$ and $X^{t_0, \a^0, 1}_T\le {2T\over 3}$, we have
\beaa
V_0(t_0, \a^0) := - \sup_{\a\in \cA} \Big|X^{t_0,\a,2}_T- X^{t_0, \a^0, 1}_T\Big|^2 = -  \big(T-X^{t_0,\a^0, 1}_T\big)^2.
\eeaa
Then
\beaa
V_0 = -\inf_{\a^0\in \cA}  \big(T-X^{t_0,\a^0, 1}_T\big)^2 = -({T\over 3})^2,
\eeaa
with optimal control $\a^{t_0, 0*} \equiv  1$, which is also different from $\a^{0*}\equiv {1\over 2}$.
\qed

\end{document}